\tikzstyle{empty vertex}  = [{circle, draw, fill = white, inner sep=0pt, minimum width=3pt}]
\tikzstyle{filled vertex}=[fill=cyan, circle, thin, draw, inner sep=0pt, minimum width=3pt]
\newtheorem{theorem}{Theorem}[section]
\newtheorem{proposition}[theorem]{Proposition}
\newtheorem{lemma}[theorem]{Lemma}
\newtheorem{corollary}[theorem]{Corollary}
\title{Self-complementary (Pseudo-)Split Graphs}
\author{Yixin Cao\thanks{Department of Computing, Hong Kong Polytechnic University, Hong Kong, China. {\tt \{yixin.cao, haowei.chen\} @polyu.edu.hk}, {\tt shenghua.wang@connect.polyu.hk}. 
    {Supported in part by the Hong Kong Research Grants Council (RGC) under grant 15221420, and the National Natural Science Foundation of China (NSFC) under grants 61972330 and 62372394.}
  }
  \and Haowei Chen\footnotemark[1]
  \and Shenghua Wang\footnotemark[1]
}
\date{}
\begin{document}

\maketitle

\begin{abstract}
  We are concerned with split graphs and pseudo-split graphs whose complements are isomorphic to themselves.  These special subclasses of self-complementary graphs are actually the core of self-complementary graphs.  Indeed, we show that all self-complementary graphs with forcibly self-complementary degree sequences are pseudo-split graphs.  We also give formulas to calculate the number of self-complementary (pseudo-)split graphs of a given order, and show that Trotignon's conjecture holds for all self-complementary split graphs.
\end{abstract}

\section{Introduction}

The \emph{complement} of a graph $G$ is a graph defined on the same vertex set of $G$, where a pair of distinct vertices are adjacent if and only if they are not adjacent in $G$.
In this paper, we study the graph that is isomorphic to its complement, hence called \emph{self-complementary}\index{self-complementary}.
The graph of order one is trivially self-complementary.
There is one self-complementary graph of order four and two self-complementary graphs of order five.
Figure~\ref{fig:share-degree-sequence} lists all self-complementary graphs with eight vertices.
A graph is a \emph{split graph} if its vertex set can be partitioned into a clique and an independent set.
The first three of Figure~\ref{fig:share-degree-sequence} are split graphs, and their rendition in Figures~\ref{fig:split-sc-8-vertices}(a--c) highlight the partition.

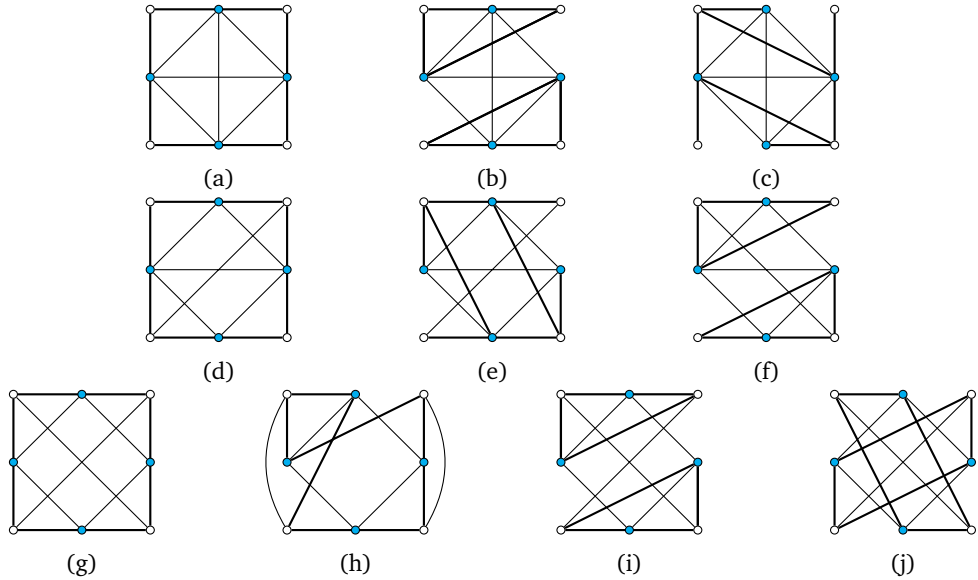
\begin{figure}[ht]
  \centering
  \begin{subfigure}[b]{.22\textwidth}
    \centering
    \begin{tikzpicture}[scale=.6]
      \def\n{4}
      \def\radius{1.5}
      \foreach \i in {0,..., \the\numexpr\n-1\relax} {
        \pgfmathsetmacro{\angle}{45 - (\i - .5) * (360 / \n)}
        \node[filled vertex] (c\i) at (\angle:\radius) {};

        \pgfmathsetmacro\radius{\radius*sqrt(2)}
        \pgfmathsetmacro{\angle}{90 - (\i - .5) * (360 / \n)}
        \node[empty vertex] (i\i) at (\angle:\radius) {};
      }
      \begin{scope}[on background layer]
        \foreach \i in {0,..., \the\numexpr\n-1\relax} {
          \draw let \n1 = {int(Mod(\i - 1, 4))} in (c\n1) -- (c\i);
          \draw[thick] let \n1 = {int(Mod(\i - 1, 4))} in (i\n1) -- (i\i);
        }
        \foreach \i in {1, 2} 
        \draw let \n1 = {int(Mod(\i - 2, 4))} in (c\n1) -- (c\i);
      \end{scope}
    \end{tikzpicture}
    \caption{}
    \label{fig:a}
  \end{subfigure}
  \begin{subfigure}[b]{.22\textwidth}
    \centering
    \begin{tikzpicture}[scale=.6]
      \def\n{4}
      \def\radius{1.5}
      \foreach \i in {0,..., \the\numexpr\n-1\relax} {
        \pgfmathsetmacro{\angle}{45 - (\i - .5) * (360 / \n)}
        \node[filled vertex] (c\i) at (\angle:\radius) {};

        \pgfmathsetmacro\radius{\radius*sqrt(2)}
        \pgfmathsetmacro{\angle}{90 - (\i - .5) * (360 / \n)}
        \node[empty vertex] (i\i) at (\angle:\radius) {};
      }
      \begin{scope}[on background layer]
        \foreach \i in {0,..., \the\numexpr\n-1\relax} {
          \draw let \n1 = {int(Mod(\i - 1, 4))} in (c\n1) -- (c\i);
        }
        \foreach \i in {1, 3} 
        \draw[thick] let \n1 = {int(Mod(\i - 2, 4))} in (i\n1) -- (c\i);
        \foreach \i in {0, 2} {
          \draw[thick] let \n1 = {int(Mod(\i - 1, 4))} in (c\n1) -- (i\i);
        }
        \foreach \i in {1, 3}{
          \pgfmathsetmacro\j{int(Mod(\i + 2, 4))}
          \pgfmathsetmacro\k{int(Mod(\i + 1, 4))}
          \draw[thick] (i\j) -- (c\i) -- (i\k) -- (i\j);
        }
        \foreach \i in {1, 2} 
        \draw let \n1 = {int(Mod(\i - 2, 4))} in (c\n1) -- (c\i);
      \end{scope}
    \end{tikzpicture}
    \caption{}
    \label{fig:b}
  \end{subfigure}
  \begin{subfigure}[b]{.22\textwidth}
    \centering
    \begin{tikzpicture}[scale=.6]
      \def\n{4}
      \def\radius{1.5}
      \foreach \i in {0,..., \the\numexpr\n-1\relax} {
        \pgfmathsetmacro{\angle}{45 - (\i - .5) * (360 / \n)}
        \node[filled vertex] (c\i) at (\angle:\radius) {};

        \pgfmathsetmacro\radius{\radius*sqrt(2)}
        \pgfmathsetmacro{\angle}{90 - (\i - .5) * (360 / \n)}
        \node[empty vertex] (i\i) at (\angle:\radius) {};
      }
      \begin{scope}[on background layer]
        \foreach \i in {0,..., \the\numexpr\n-1\relax} {
          \draw let \n1 = {int(Mod(\i - 1, 4))} in (c\n1) -- (c\i);
        }
        \foreach \i in {0, 2} {
          \draw[thick] let \n1 = {int(\i+1)} in (c\n1) -- (i\i) -- (c\i);
          \draw[thick] let \n1 = {int(Mod(\i - 1, 4))} in (i\n1) -- (i\i);
        }
        \foreach \i in {1, 2} 
        \draw let \n1 = {int(Mod(\i - 2, 4))} in (c\n1) -- (c\i);
      \end{scope}
    \end{tikzpicture}
    \caption{}
    \label{fig:c}
  \end{subfigure}

  \begin{subfigure}[b]{.22\textwidth}
    \centering
    \begin{tikzpicture}[scale=.6]
      \def\n{4}
      \def\radius{1.5}
      \foreach \i in {0,..., \the\numexpr\n-1\relax} {
        \pgfmathsetmacro{\angle}{45 - (\i - .5) * (360 / \n)}
        \node[filled vertex] (c\i) at (\angle:\radius) {};

        \pgfmathsetmacro\radius{\radius*sqrt(2)}
        \pgfmathsetmacro{\angle}{90 - (\i - .5) * (360 / \n)}
        \node[empty vertex] (i\i) at (\angle:\radius) {};
      }
      \begin{scope}[on background layer]
        \foreach \i in {0,..., \the\numexpr\n-1\relax} {
          \draw let \n1 = {int(Mod(\i - 1, 4))} in (c\n1) -- (c\i);
          \draw[thick] let \n1 = {int(Mod(\i - 1, 4))} in (i\n1) -- (i\i);
        }
      \end{scope}
      \draw (c1) -- (c3) (i1) -- (i3);
    \end{tikzpicture}
    \caption{}
    \label{fig:d}
  \end{subfigure}
  \begin{subfigure}[b]{.22\textwidth}
    \centering
    \begin{tikzpicture}[scale=.6]
      \def\n{4}
      \def\radius{1.5}
      \foreach \i in {0,..., \the\numexpr\n-1\relax} {
        \pgfmathsetmacro{\angle}{45 - (\i - .5) * (360 / \n)}
        \node[filled vertex] (c\i) at (\angle:\radius) {};

        \pgfmathsetmacro\radius{\radius*sqrt(2)}
        \pgfmathsetmacro{\angle}{90 - (\i - .5) * (360 / \n)}
        \node[empty vertex] (i\i) at (\angle:\radius) {};
      }
      \begin{scope}[on background layer]
        \foreach \i in {0,..., \the\numexpr\n-1\relax} {
          \draw let \n1 = {int(Mod(\i - 1, 4))} in (c\n1) -- (c\i);
        }
        \foreach \i in {0, 2} {
          \draw[thick] let \n1 = {int(2 -\i)} in (c\n1) -- (i\i);
          \draw[thick] let \n1 = {int(Mod(\i - 1, 4))} in (c\n1) -- (i\i);
        }
        \draw[thick] (i0) -- (i1) (i3) -- (i2);
        \draw (c1) -- (c3)(i1) -- (i3);;
      \end{scope}
    \end{tikzpicture}
    \caption{}
    \label{fig:e}
  \end{subfigure}
  \begin{subfigure}[b]{.22\textwidth}
    \centering
    \begin{tikzpicture}[scale=.6]
      \def\n{4}
      \def\radius{1.5}
      \foreach \i in {0,..., \the\numexpr\n-1\relax} {
        \pgfmathsetmacro{\angle}{45 - (\i - .5) * (360 / \n)}
        \node[filled vertex] (c\i) at (\angle:\radius) {};

        \pgfmathsetmacro\radius{\radius*sqrt(2)}
        \pgfmathsetmacro{\angle}{90 - (\i - .5) * (360 / \n)}
        \node[empty vertex] (i\i) at (\angle:\radius) {};
      }
      \begin{scope}[on background layer]
        \foreach \i in {0,..., \the\numexpr\n-1\relax} {
          \draw let \n1 = {int(Mod(\i - 1, 4))} in (c\n1) -- (c\i);
        }
        \foreach \i in {1, 3} 
        \draw[thick] let \n1 = {int(Mod(\i - 2, 4))} in (i\n1) -- (c\i);
        \foreach \i in {0, 2} {
          \draw[thick] let \n1 = {int(Mod(\i - 1, 4))} in (c\n1) -- (i\i);
        }
        \draw (c1) -- (c3) (i0) -- (i2);
        \draw[thick] (i1) -- (i0) (i2) -- (i3);
      \end{scope}
    \end{tikzpicture}
    \caption{}
    \label{fig:f}
  \end{subfigure}

  \begin{subfigure}[b]{.22\textwidth}
    \centering
    \begin{tikzpicture}[scale=.6]
      \def\n{4}
      \def\radius{1.5}
      \foreach \i in {0,..., \the\numexpr\n-1\relax} {
        \pgfmathsetmacro{\angle}{45 - (\i - .5) * (360 / \n)}
        \node[filled vertex] (c\i) at (\angle:\radius) {};

        \pgfmathsetmacro\radius{\radius*sqrt(2)}
        \pgfmathsetmacro{\angle}{90 - (\i - .5) * (360 / \n)}
        \node[empty vertex] (i\i) at (\angle:\radius) {};
      }
      \begin{scope}[on background layer]
        \foreach \i in {0,..., \the\numexpr\n-1\relax} {
          \draw let \n1 = {int(Mod(\i - 1, 4))} in (c\n1) -- (c\i);
          \draw[thick] let \n1 = {int(Mod(\i - 1, 4))} in (i\n1) -- (i\i);
        }
        \foreach \i in {1, 2} 
        \draw let \n1 = {int(Mod(\i - 2, 4))} in (i\n1) -- (i\i);
      \end{scope}
    \end{tikzpicture}
    \caption{}
    \label{fig:g}
  \end{subfigure}
  \begin{subfigure}[b]{.22\textwidth}
    \centering
    \begin{tikzpicture}[scale=.6]
      \def\n{4}
      \def\radius{1.5}
      \foreach \i in {0,..., \the\numexpr\n-1\relax} {
        \pgfmathsetmacro{\angle}{45 - (\i - .5) * (360 / \n)}
        \node[filled vertex] (c\i) at (\angle:\radius) {};

        \pgfmathsetmacro\radius{\radius*sqrt(2)}
        \pgfmathsetmacro{\angle}{90 - (\i - .5) * (360 / \n)}
        \node[empty vertex] (i\i) at (\angle:\radius) {};
      }
      \begin{scope}[on background layer]
        \foreach \i in {0,..., \the\numexpr\n-1\relax} {
          \draw let \n1 = {int(Mod(\i - 1, 4))} in (c\n1) -- (c\i);
        }
        \draw[thick] (i1) -- (i2) -- (i3) (i1) -- (c3) (i3) -- (c0) -- (i0) -- (c3);
        \draw[bend left] (i1) edge (i2) (i3) edge (i0);
      \end{scope}
    \end{tikzpicture}
    \caption{}
    \label{fig:h}
  \end{subfigure}
  \begin{subfigure}[b]{.22\textwidth}
    \centering
    \begin{tikzpicture}[scale=.6]
      \def\n{4}
      \def\radius{1.5}
      \foreach \i in {0,..., \the\numexpr\n-1\relax} {
        \pgfmathsetmacro{\angle}{45 - (\i - .5) * (360 / \n)}
        \node[filled vertex] (c\i) at (\angle:\radius) {};

        \pgfmathsetmacro\radius{\radius*sqrt(2)}
        \pgfmathsetmacro{\angle}{90 - (\i - .5) * (360 / \n)}
        \node[empty vertex] (i\i) at (\angle:\radius) {};
      }
      \begin{scope}[on background layer]
        \foreach \i in {0,..., \the\numexpr\n-1\relax} {
          \draw let \n1 = {int(Mod(\i - 1, 4))} in (c\n1) -- (c\i);
        }
        \foreach \i in {1, 3}{
          \pgfmathsetmacro\j{int(Mod(\i + 2, 4))}
          \pgfmathsetmacro\k{int(Mod(\i + 1, 4))}
          \draw[thick] (i\j) -- (c\i) -- (i\k) -- (i\j);
        }
        \foreach \i in {0, 2} {
          \draw[thick] let \n1 = {int(Mod(\i - 1, 4))} in (c\n1) -- (i\i);
        }
        \draw (i1) -- (i0) -- (i2) -- (i3) -- (i1);
      \end{scope}
    \end{tikzpicture}
    \caption{}
    \label{fig:i}
  \end{subfigure}
  \begin{subfigure}[b]{.22\textwidth}
    \centering
    \begin{tikzpicture}[scale=.6]
      \def\n{4}
      \def\radius{1.5}
      \foreach \i in {0,..., \the\numexpr\n-1\relax} {
        \pgfmathsetmacro{\angle}{45 - (\i - .5) * (360 / \n)}
        \node[filled vertex] (c\i) at (\angle:\radius) {};

        \pgfmathsetmacro\radius{\radius*sqrt(2)}
        \pgfmathsetmacro{\angle}{90 - (\i - .5) * (360 / \n)}
        \node[empty vertex] (i\i) at (\angle:\radius) {};
      }
      \begin{scope}[on background layer]
        \foreach \i in {0,..., \the\numexpr\n-1\relax} {
          \draw let \n1 = {int(Mod(\i - 1, 4))} in (c\n1) -- (c\i);
        }
        \foreach \i in {0,..., 3} {
          \draw[thick] let \n1 = {int(Mod(\i - 2, 4))} in (i\i) -- (c\i) -- (i\n1);
        }
        \foreach \i in {1, 2} 
        \draw let \n1 = {int(Mod(\i - 2, 4))} in (i\n1) -- (i\i);
      \end{scope}
    \end{tikzpicture}
    \caption{}
    \label{fig:j}
  \end{subfigure}
  \caption{All self-complementary graphs on eight vertices.
    In each graph, the four vertices with lower degree are represented as empty nodes, and others filled nodes.}
  \label{fig:share-degree-sequence}
\end{figure}

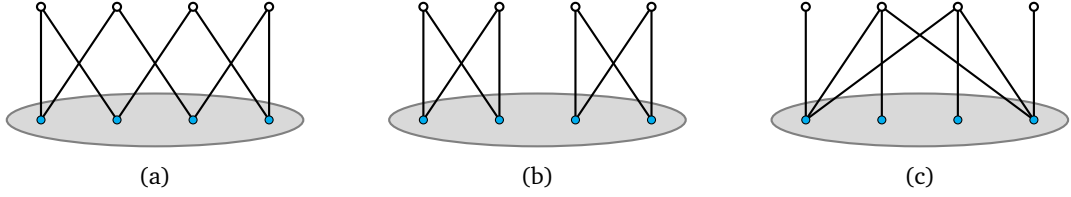
\begin{figure}[ht]
  \centering
  \tikzset{every path/.style={thick}}
  \begin{subfigure}[b]{.3\textwidth}
    \centering
    \begin{tikzpicture}
      \draw[fill = gray!30, draw = gray] (2.5, 0) ellipse (1.95 and 0.35);
      \foreach \i in {2, 3} 
      \draw (\i - 1, 0) -- (\i, 1.5) -- (\i + 1, 0);
      \foreach \i in {1, 4}
      \draw (\i, 0) -- (\i, 1.5) -- ({\i - (-1)^\i}, 0);
      \foreach \i in {1, ..., 4} {
        \node[filled vertex] (c\i) at (\i, 0) {};
        \node[empty vertex] (i\i) at (\i, 1.5) {};
      }
    \end{tikzpicture}
    \caption{}
    \label{fig:split-sc-8-vertices-3}
  \end{subfigure}
  \,
  \begin{subfigure}[b]{.3\textwidth}
    \centering
    \begin{tikzpicture}
      \draw[fill = gray!30, draw = gray] (2.5, 0) ellipse (1.95 and 0.35);
      \foreach \i in {1,..., 4}
      \draw (\i, 0) -- (\i, 1.5) -- ({\i - (-1)^\i}, 0);
      \foreach \i in {1, ..., 4} {
        \node[filled vertex] (c\i) at (\i, 0) {};
        \node[empty vertex] (i\i) at (\i, 1.5) {};
      }
    \end{tikzpicture}
    \caption{}
    \label{fig:split-sc-8-vertices-1}
  \end{subfigure}
  \,
  \begin{subfigure}[b]{.3\textwidth}
    \centering
    \begin{tikzpicture}
      \draw[fill = gray!30, draw = gray] (2.5, 0) ellipse (1.95 and 0.35);
      \foreach \i in {2, 3} 
      \draw (1, 0) -- (\i, 1.5) -- (4, 0);
      \foreach \i in {1, ..., 4} 
      \draw (\i, 0) -- (\i, 1.5);
      \foreach \i in {1, ..., 4} {
        \node[filled vertex] (c\i) at (\i, 0) {};
        \node[empty vertex] (i\i) at (\i, 1.5) {};
      }
    \end{tikzpicture}
    \caption{}
    \label{fig:split-sc-8-vertices-2}
  \end{subfigure}
  \caption{Self-complementary split graphs with eight vertices.  Vertices in $I$ are represented by empty nodes on the top, while vertices in $K$ are represented by filled nodes on the bottom. For clarity, edges among vertices in $K$ are omitted.
    The degree sequences are (a) $(5^4, 2^4)$, (b) $(5^4, 2^4)$, and (c) $(6^2,4^2,3^2,1^2)$.}
  \label{fig:split-sc-8-vertices}
\end{figure}

These two families of graphs are connected by the following observation.
An elementary counting argument convinces us that the order of a nontrivial self-complementary graph is either $4k$ or $4k+1$ for some positive integer $k$.
Consider a self-complementary graph $G$ of order $4k$, where $L$ (resp., $H$) represents the set of $2k$ vertices with smaller (resp., higher) degrees.  Note that $d(x) \le 2 k - 1 < 2k \le d(y)$ for every pair of vertices $x\in L$ and $y\in H$.
Xu and Wong~\cite{xu-00-self-complementary-graphs} observed that the subgraphs of $G$ induced by $L$ and $H$ are complementary to each other.  More importantly, the bipartite graph spanned by the edges between $L$ and $H$ is closed under \emph{bipartite complementation}, i.e., reversing edges in between but keeping both $L$ and $H$ independent. See the thick edges in Figure~\ref{fig:share-degree-sequence}.
When studying the connection between $L$ and $H$, it is more convenient to add all the missing edges among $H$ and remove all the edges among $L$, thereby turning $G$ into a self-complementary split graph.
In this sense, every self-complementary graph of order $4 k$ can be constructed from a self-complementary split graph of the same order and a graph of order $2 k$.
For a self-complementary graph of an odd order, the self-complementary split graph is replaced by a self-complementary pseudo-split graph.
A pseudo-split graph is either a split graph or a split graph plus a five-cycle such that every vertex on the cycle is adjacent to every vertex in the clique of the split graph and is nonadjacent to any vertex in the independent set of the split graph.

The decomposition theorem of Xu and Wong~\cite{xu-00-self-complementary-graphs} was for the construction of self-complementary graphs, another ingredient of which is the degree sequences of these graphs (the non-increasing sequence of its vertex degrees).
Clapham and Kleitman~\cite{clapham-76-self-complementary-degree-sequences, clapham-76-potentially-self-complementary-sequences} present a necessary condition for a degree sequence to be that of a self-complementary graph.
However, a realization of such a degree sequence may or may not be self-complementary.
A natural question is to ask about the degree sequences all of whose realizations are necessarily self-complementary, called \emph{forcibly self-complementary}.
All the degree sequences for self-complementary graphs up to order five, $(0^1)$, $(2^2, 1^2)$, $(2^5)$, and $(3^2, 2^1,1^2)$, are forcibly self-complementary.
Of the four degree sequences for the self-complementary graphs of order eight, only $(5^4,2^4)$ and $(6^2,4^2,3^2,1^2)$ are focibly self-complementary.  All the realizations of these forcibly self-complementary degree sequences turn out to be pseudo-split graphs.  As we will see, this is not incidental.

We take $p$ graphs $S_{1}$, $S_{2}$, $\ldots$, $S_{p}$, each being either a four-path or one of the first two graphs in Figure~\ref{fig:share-degree-sequence}.
Note that the each of them admits a unique decomposition into a clique $K_{i}$ and an independent set $I_{i}$.
For any pair of $i, j$ with $1\le i < j \le p$, we add all possible edges between $K_{i}$ and $K_{j}\cup I_{j}$.
It is easy to verify that the resulting graph is self-complementary, and can be partitioned into a clique $\bigcup_{i=1}^{p} K_{i}$ and an independent set $\bigcup_{i=1}^{p} I_{i}$.
By an \emph{elementary self-complementary pseudo-split graph} we mean such a graph, or one obtained from it by adding a single vertex or a five-cycle and make them complete to $\bigcup_{i=1}^{p} K_{i}$.
For example, we end with the graph in Figure~\ref{fig:share-degree-sequence}(c)
with $p = 2$ and both $S_{1}$ and $S_{2}$ being four-paths.
It is a routine exercise to verify that the degree sequence of an elementary self-complementary pseudo-split graph is forcibly self-complementary.  We show that the other direction holds as well, thereby fully characterizing forcibly self-complementary degree sequences.

\begin{theorem}\label{thm:main}
   A degree sequence is forcibly self-complementary if and only if every realization of it is an elementary self-complementary pseudo-split graph.
\end{theorem}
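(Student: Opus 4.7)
The backward direction is immediate: an elementary self-complementary pseudo-split graph is self-complementary by construction, so if every realization of $d$ is one, then $d$ is forcibly self-complementary. For the forward direction, I argue the contrapositive: given any realization $G$ of $d$ that is not an elementary self-complementary pseudo-split graph, I exhibit another realization of $d$ that is not self-complementary.

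\textbf{Counting identity.} The engine of the argument is a counting identity derived from the Xu--Wong decomposition. For a self-complementary graph $G$ of even order $n = 2m$ with lower half $L$ and upper half $H$, one has $|E(G)| = \binom{2m}{2}/2$, and the complementarity of $G[L]$ and $G[H]$ yields $|E(G[L])| + |E(G[H])| = \binom{m}{2}$. Together, these force the bipartite part between $L$ and $H$ to have exactly $m^{2}/2$ edges, whence
\[
|E(G[L])| = \tfrac{1}{2}\Bigl(\sum_{v \in L} d(v) - \tfrac{m^{2}}{2}\Bigr),
\]
a quantity depending only on $d$ (the odd-order case $n = 2m+1$ is analogous after accounting for the middle vertex). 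Consequently, any two self-complementary realizations of $d$ share the same value of $|E(G[L])|$.

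\textbf{Pseudo-splitness is forced.} Suppose some realization $G$ has an edge $uv \in E(G[L])$. Since then $|E(G[H])| < \binom{m}{2}$, the subgraph $G[H]$ has edges, and I locate $x, y \in H$ with $xy \in E(G[H])$ such that either $\{ux, vy\}$ or $\{uy, vx\}$ consists of non-edges of $G$; the degree bound $d(u), d(v) < m$ ensures that $u$ and $v$ each have non-neighbors in $H$, giving the needed slack. The 2-switch deleting $\{uv, xy\}$ and adding the corresponding bipartite pair yields a realization $G'$ of $d$ with $|E(G'[L])| = |E(G[L])| - 1$; by the identity above, $G'$ cannot be self-complementary, contradicting that $d$ is forcibly self-complementary. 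Hence every realization must have $|E(G[L])| = 0$, and is therefore a split graph (pseudo-split in the odd case).

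\textbf{Elementary structure and the main obstacle.} Given that every realization is pseudo-split, I analyze the bipartite graph $B$ between the clique $K$ and the independent set $I$ (plus an optional $C_5$). The self-complementarity of $G$ translates into closure of $B$ under bipartite complementation, and by an induction on $|V(G)|$ I decompose $B$ into modules, each matching either a $P_{4}$ or one of the first two graphs in Figure~\ref{fig:share-degree-sequence}, thereby recovering the elementary construction from the paragraph preceding the theorem. If any module deviates from this form, a 2-switch within $B$ yields a pseudo-split realization of $d$ that is not self-complementary, completing the contrapositive. The hard part is the pseudo-splitness step: one must verify that the required 2-switch always exists even when $u, v$ have tightly restricted non-neighborhoods in $H$, which may necessitate varying the choice of $uv \in E(G[L])$ or $xy \in E(G[H])$ or chaining several 2-switches; an analogous technical wrinkle arises for the middle vertex in the odd-order setting, where the 2-switch must be set up so as not to disturb its degree.
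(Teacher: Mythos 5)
Your backward direction is fine, and your counting identity is correct in the even case: for a self-complementary $G$ of order $4k$, the Xu--Wong decomposition forces $e(L,H)=2k^{2}$, so $|E(G[L])|$ is determined by the degree sequence alone. But the forward direction has genuine gaps. First, the step that is supposed to force splitness is not proved: from an edge $uv$ in $G[L]$ you need an edge $xy$ in $G[H]$ with $ux,vy$ (or $uy,vx$) non-edges, and the fact that $u$ and $v$ each have non-neighbours in $H$ does not supply this --- the non-neighbourhoods $A=H\setminus N(u)$ and $B=H\setminus N(v)$ could a priori have no $G[H]$-edge between them, and you explicitly defer exactly this case analysis (``varying the choice \dots or chaining several 2-switches''), which is the actual content of the step. (Also, the inference ``since $|E(G[H])|<\binom{m}{2}$, $G[H]$ has edges'' is backwards; what you need is that $G[L]$ is not complete, which holds, but for a different reason.) Second, the odd case is not ``analogous after accounting for the middle vertex'': the median degree class of a self-complementary graph of order $4k+1$ can a priori have $5, 9, \dots$ vertices (regular self-complementary graphs such as Paley graphs exist), so one must rule out every middle part other than a single vertex or a $C_{5}$; this is precisely Proposition~\ref{prop:ds-not-forcibly}(i), via the construction $C_{4k+1}^{k}$ of Figure~\ref{fig:reguar-not-sc}, and your sketch has no counterpart.

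Even granting splitness, the final step --- that the structure between $K$ and $I$ decomposes into pieces isomorphic to $P_{4}$ or the graphs of Figure~\ref{fig:share-degree-sequence}(a,\,b), completely adjacent or nonadjacent to one another --- is where essentially all the work of the theorem lies, and in your proposal it is asserted rather than proved. Nothing in your outline excludes larger homogeneous pieces: the self-complementary split graph of Figure~\ref{fig:12-vertices}(a), with degree sequence $(8^{6},3^{6})$, would be a perfectly legitimate ``module,'' and excluding it requires exhibiting a non-self-complementary realization of that degree sequence (Figure~\ref{fig:12-vertices}(b)); likewise the four-degree sequences of Proposition~\ref{prop:ds-not-forcibly}(iii) must be excluded before one can even assert that each degree class meets its slice regularly. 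The paper's proof achieves this through a chain you would need to replicate in some form: slices of a realization inherit forcible self-complementarity (Lemma~\ref{lem:sub-graph-forcibly}), the parity obstruction (Proposition~\ref{prop:even-number}), the explicit non-self-complementary constructions of Proposition~\ref{prop:ds-not-forcibly}, and then Lemmas~\ref{lem:homogeneous}--\ref{lem:focibly-ds-G-scs} forcing each $V_{i}$ to be a clique or independent set and forcing the complete/nonadjacent pattern between slices and the middle $C_{5}$ or vertex. Since your proposal contains no argument playing the role of these constructions, the forward direction is not established.
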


Our result also bridges a longstanding gap in the literature on self-complementary graphs.
Rao~\cite{rao1981survey} has proposed another characterization for forcibly self-complementary degree sequences (we leave the statement, which is too technical, to Section~\ref{sec:degree-sequences}).  As far as we can check, he never published a proof of his characterization.  It follows immediately from Theorem~\ref{thm:main}.

All self-complementary graphs up to order five are pseudo-split graphs, while only three out of the ten self-complementary graphs of order eight are.
By examining the list of small self-complementary graphs, Ali~\cite{ali-08-thesis} counted self-complementary split graphs up to $17$ vertices.
Whether a graph is a split graph can be determined solely by its degree sequence.
However, this approach needs the list of all self-complementary graphs, and hence cannot be generalized to large graphs.
Answering a question of Harary~\cite{harary:1960}, Read~\cite{read-63-self-complementary-graphs} presented a formula for the number of self-complementary graphs with a specific number of vertices.
Clapham~\cite{clapham-84-enumeration} simplified Read's formula by studying the isomorphisms between a self-complementary graph and its complement.
We take an approach similar to Clapham's for self-complementary split graphs with an even order, which leads to a formula for the number of such graphs.
For other self-complementary pseudo-split graphs,
we establish a one-to-one correspondence between self-complementary split graphs on $4k$ vertices and those on $4k+1$ vertices, and a one-to-one correspondence between self-complementary pseudo-split graphs of order $4k+1$ that are not split graphs and self-complementary split graphs on $4k-4$ vertices.

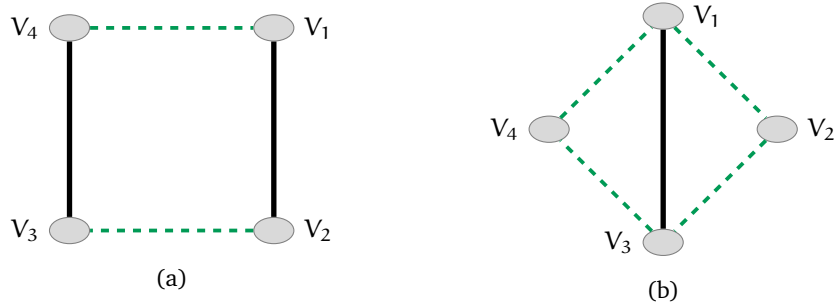
\begin{figure}[ht]
  \tikzstyle{full edge}=[line width=0.70mm]
  \tikzstyle{partial edge}=[line width=0.50mm, ForestGreen, dashed]
  \tikzstyle{model vertex}  = [{ellipse, fill = gray!30, draw = gray, inner sep=3.5pt, minimum width=15pt}]
\centering
  \begin{subfigure}[ht]{.4\textwidth}
    \centering
    \begin{tikzpicture}
      \foreach \i in {3,4} 
        \node[model vertex, "$V_{\i}$" left] (\i) at ({135 - 90*\i}:1.9) {};
      \foreach \i in {1, 2} 
        \node[model vertex, "$V_{\i}$" right] (\i) at ({135 - 90*\i}:1.9) {};
      \draw[full edge] (1) -- (2) (3) -- (4);
      \draw[partial edge] (2) -- (3) (4) -- (1);
    \end{tikzpicture}
    \caption{}
    \label{fig:partition-rectangle-model}
  \end{subfigure}
    \begin{subfigure}[ht]{.4\textwidth}
    \centering
    \begin{tikzpicture}
      \foreach \i in {1, 2} 
        \node[model vertex, "$V_{\i}$" right] (\i) at ({180 - 90*\i}:1.5) {};
      \foreach \i in {3,4} 
        \node[model vertex, "$V_{\i}$" left] (\i) at ({180 - 90*\i}:1.5) {};
      \draw[full edge] (1) -- (3);
      \draw[partial edge] (1) -- (2) (2) -- (3) (3) -- (4) (4) -- (1);
    \end{tikzpicture}
    \caption{}
    \label{fig:partition-diamond-model}
  \end{subfigure}
  \caption{The (a) rectangle and (b) diamond partitions.  Each node represents one part of the partition.
    A solid line indicates that all the edges between the two parts are present, a missing line indicates that there is no edge between the two parts, while a dashed line imposes
    no restrictions on the two parts.}
  \label{fig:model}
\end{figure}

We also study a conjecture of 
Trotignon \cite{trotignon-05-self-complementary-graphs}, which asserts that if a self-complementary graph $G$ does not contain a five-cycle, then its vertex set can be partitioned into four nonempty sets with the adjacency patterns of a rectangle or a diamond, as described in Figure~\ref{fig:model}. 
He managed to prove that certain special graphs satisfy this conjecture.
The study of rectangle partitions in self-complementary graphs enabled Trotignon to present a new proof of Gibbs' theorem~\cite[Theorem 4]{gibbs-74-self-complementary}.
We prove Trotignon's conjecture on self-complementary split graphs, with a stronger statement.
We say that a partition of $V(G)$ is \emph{self-complementary} if it forms the same partition in the complement of $G$, illustrated in Figure~\ref{fig:selfg-contains-C5}.
Every self-complementary split graph of an even order admits a diamond partition that is self-complementary.
Moreover, for each positive integer $k$, there is a single graph of order $4k$ that admits a rectangle partition.
Note that under the setting as Trotignon's, the graph always admits a partition that is self-complementary, while in general, there are graphs that admit a partition, but do not admit any partition that is self-complementary \cite{cao-23-self-complementary-partition}.

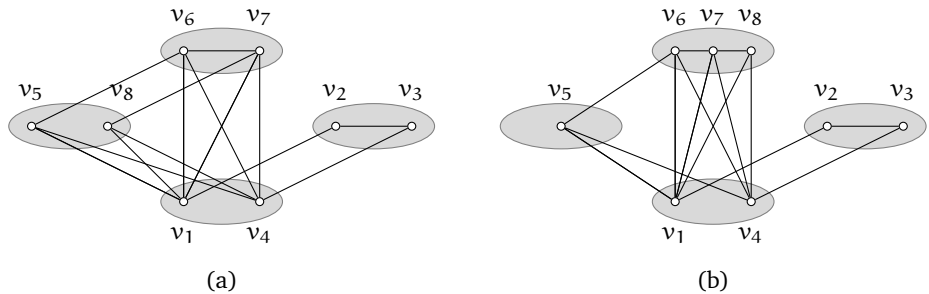
\begin{figure}[ht]
  \centering
  \begin{subfigure}[b]{.4\textwidth}
    \centering
    \begin{tikzpicture}[scale = 1]
      \foreach \i in {1,3} 
      \draw[fill = gray!30, draw = gray] ({180 - 90*\i}:1) ellipse (.8 and 0.3);
      \foreach \i in {2,4} 
      \draw[fill = gray!30, draw = gray] ({180 - 90*\i}:2) ellipse (.8 and 0.3);

      \node at (-0.5,1.45) {$v_{6}$};
      \node at (0.5,1.45) {$v_{7}$};
      \node at (-0.5,-1.45) {$v_{1}$};
      \node at (0.5,-1.45) {$v_{4}$};
      
      \node at (-2.5,0.45) {$v_{5}$};
      \node at (-1.3,0.45) {$v_{8}$};
      \node at (1.5,0.45) {$v_{2}$};
      \node at (2.5,0.45) {$v_{3}$};

      \node[empty vertex] (1) at (-0.5,-1) {};
      \node[empty vertex] (4) at (0.5,-1) {};
      \node[empty vertex] (5) at (-2.5,0) {};
      \node[empty vertex] (8) at (-1.5,0) {};
      \node[empty vertex] (6) at (-0.5,1) {};
      \node[empty vertex] (7) at (0.5,1) {};
      \node[empty vertex] (2) at (1.5,0) {};
      \node[empty vertex] (3) at (2.5,0) {};
      \draw (1) -- (5) (1) -- (6) (1) -- (7);
      \draw[black] (1) -- (2) -- (3) -- (4);
      \draw (5) -- (6) -- (7) -- (8);
      \draw (5) -- (1) (5) -- (4);
      \draw (6) -- (1) (6) -- (4);
      \draw (7) -- (1) (7) -- (4);
      \draw[black] (8) -- (1) (8) -- (4);
    \end{tikzpicture}
    \caption{}
  \end{subfigure}
  \begin{subfigure}[b]{.4\textwidth}
    \centering
    \begin{tikzpicture}[scale = 1]
      \foreach \i in {1,3} 
      \draw[fill = gray!30, draw = gray] ({180 - 90*\i}:1) ellipse (.8 and 0.3);
      \foreach \i in {2,4} 
      \draw[fill = gray!30, draw = gray] ({180 - 90*\i}:2) ellipse (.8 and 0.3);

      \node at (-0.5,1.45) {$v_{6}$};
      \node at (0,1.45) {$v_{7}$};
      \node at (0.5,1.45) {$v_{8}$};
      \node at (-0.5,-1.45) {$v_{1}$};
      \node at (0.5,-1.45) {$v_{4}$};
      
      \node at (-2,0.45) {$v_{5}$};
      
      \node at (1.5,0.45) {$v_{2}$};
      \node at (2.5,0.45) {$v_{3}$};

      \node[empty vertex] (1) at (-0.5,-1) {};
      \node[empty vertex] (4) at (0.5,-1) {};
      \node[empty vertex] (5) at (-2,0) {};
      \node[empty vertex] (8) at (0.5,1) {};
      \node[empty vertex] (6) at (-0.5,1) {};
      \node[empty vertex] (7) at (0,1) {};
      \node[empty vertex] (2) at (1.5,0) {};
      \node[empty vertex] (3) at (2.5,0) {};
      \draw (1) -- (5) (1) -- (6) (1) -- (7);
      \draw[black] (1) -- (2) -- (3) -- (4);
      \draw (5) -- (6) -- (7) -- (8);
      \draw (5) -- (1) (5) -- (4);
      \draw (6) -- (1) (6) -- (4);
      \draw (7) -- (1) (7) -- (4);
      \draw[black] (8) -- (1) (8) -- (4);
    \end{tikzpicture}
    \caption{}
  \end{subfigure}
  \caption{Two diamond partitions of a self-complementary graph; only the first is self-complementary.}
  \label{fig:selfg-contains-C5}
\end{figure}

Before closing this section, let us mention related work.
There is another natural motivation to study self-complementary split graphs.  Sridharan and Balaji~\cite{sridharan-98-self-complementary-chordal} tried to understand self-complementary graphs that are chordal.
  They are precisely split graphs~\cite{foldes-77-split-graphs}.
The class of split graphs is \emph{closed under complementation}.\footnote{Some authors call such graph classes ``self-complementary,'' e.g., the influential ``Information System on Graph Classes and their Inclusions'' (https://www.graphclasses.org).}
We may study self-complementary graphs in other graph classes.  Again, for this purpose, it suffices to focus on those closed under complementation.
In the simplest case, we can define such a class by forbidding a graph $F$ as well as its complement.
It is not interesting when $F$ consists of two or three vertices, or when it is the four-path.
When $F$ is the four-cycle, we end with the class of pseudo-split graphs, which is the simplest in this sense.
A more important class closed under complementation is perfect graphs.
We leave it open to characterize self-complementary perfect graphs.
Another open problem is the recognition of self-complementary (pseudo)-split graphs.
It is well known that the isomorphism test of both self-complementary graphs and (pseudo)-split graphs are GI-complete
\cite{colbourn-78-isomorphism-and-self-complementary, lueker-79-interval-isomorphism}.


\section{Preliminaries}\label{sec:pre}

All the graphs discussed in this paper are finite and simple. The vertex set and edge set of a graph $G$ are denoted by, respectively, $V(G)$ and $E(G)$.
The two ends of an edge are \emph{neighbors} of each other, and the number of neighbors of a vertex $v$, denoted by $d_{G}(v)$, is its \emph{degree}.  We may drop the subscript $G$ if the graph is clear from the context.
For a subset $U \subseteq V(G)$, let $G[U]$ denote the subgraph of $G$ induced by $U$, whose vertex set is $U$ and whose edge set comprises all the edges with both ends in $U$, and let $G - U = G[V(G) \setminus U]$, which is simplified to $G - u$ if $U$ comprises a single vertex $u$. A \emph{clique} is a set of pairwise adjacent vertices, and an \emph{independent set} is a set of vertices that are pairwise nonadjacent.
For $\ell \geq 1$, we use $P_\ell$ and $K_\ell$ to denote the path graph and the complete graph, respectively, on $\ell$ vertices. For $\ell \geq 3$, we use $C_\ell$ to denote the $\ell$-cycle.
We say that two sets of vertices are \emph{complete} or \emph{nonadjacent} to each other if there are all possible edges or if there is no edge between them, respectively.

A graph is a \emph{split graph} if its vertex set can be partitioned into a clique and an independent set.  We use $K \uplus I$, where $K$ being a clique and $I$ an independent set, to denote a \emph{split partition} of a split graph.
The following is straightforward.  Since it is not used in the present paper, we omit the proof.
\begin{proposition}\label{lem:split-core}
  Let $G$ be a graph of an order $4 k$, and let $H$ and $L$ be the $2k$ vertices of the largest and smallest degrees, respectively.
  If $G$ is self-complementary, then it remains a self-complementary after $H$ replaced by a clique and $L$ an independent set.
\end{proposition}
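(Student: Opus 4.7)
The plan is to exhibit a self-complementing permutation of the modified graph by recycling one from $G$ itself. Write $G'$ for the graph obtained by making $H$ into a clique and $L$ into an independent set; only the adjacencies inside $H$ and inside $L$ change, while the bipartite part between $H$ and $L$ is left untouched.

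The first step is to observe that any self-complementing permutation $\sigma$ of $G$ must swap $H$ and $L$. Indeed, since $\sigma$ is an isomorphism $G \to G^{c}$, we have $d_{G}(v) + d_{G}(\sigma(v)) = 4k - 1$ for every vertex $v$. In particular, $d_{G}(v) \ge 2k$ forces $d_{G}(\sigma(v)) \le 2k - 1$, so $\sigma(H) \subseteq L$, and by symmetry $\sigma(L) \subseteq H$; since $|H| = |L| = 2k$, both inclusions are equalities.

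The second step is to verify that the very same $\sigma$ is a self-complementing permutation of $G'$. This reduces to three cases, according to the location of a pair of vertices. Pairs within $H$ are all edges of $G'$, and $\sigma$ sends them to pairs within $L$, which are all nonedges of $G'$ and hence all edges of $(G')^{c}$; the case of pairs within $L$ is symmetric. For a pair $\{u,v\}$ with $u \in H$ and $v \in L$, the edge status in $G'$ coincides with that in $G$ by construction, and the pair $\{\sigma(u), \sigma(v)\}$ lies again between $H$ and $L$, so its status in $(G')^{c}$ coincides with its status in $G^{c}$. The self-complementing property of $\sigma$ on $G$ therefore transfers verbatim to $G'$.

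There is no real obstacle here; the only thing that needs to be pinned down is that $\sigma$ honours the partition $\{H, L\}$, which the degree equation gives for free. The conceptual content is that all the structure of the self-complementing map lives on the bipartite part between $H$ and $L$, and the surgery performed to obtain $G'$ touches only the two monochromatic parts, each of which is replaced by the "canonical" choice (clique or independent set) that is automatically preserved, up to complementation, by swapping $H$ and $L$.
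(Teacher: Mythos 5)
Your proof is correct, and it is essentially the argument the paper has in mind: the paper explicitly omits the proof of this proposition as straightforward, and the underlying observation (that any antimorphism swaps $H$ and $L$, so only the bipartite part between them carries the self-complementary structure) is exactly the idea credited to Xu and Wong in the introduction. The only step you take for granted is that $H$ is precisely $\{v : d_G(v)\ge 2k\}$; this follows from the same degree equation, since $\sigma$ maps $\{v: d_G(v)\ge 2k\}$ into $\{v: d_G(v)\le 2k-1\}$ and vice versa, forcing both sets to have size exactly $2k$, after which your case analysis goes through verbatim.
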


An \emph{isomorphism}\index{isomorphism} between two graphs $G_{1}$ and $G_{2}$ is a bijection between their vertex sets, i.e., $\sigma\colon V(G_{1})\to V(G_{2})$, such that two vertices $u$ and $v$ are adjacent in $G_{1}$ if and only if $\sigma(u)$ and $\sigma(v)$ are adjacent in $G_{2}$. 
Two graphs with an isomorphism are \emph{isomorphic}.
A graph is \emph{self-complementary} if it is isomorphic to its \emph{complement} $\overline{G}$, the graph defined on the same vertex set of $G$, where a pair of distinct vertices are adjacent in $\overline{G}$ if and only if they are not adjacent in $G$.
An isomorphism between $G$ and $\overline{G}$ is a permutation of $V(G)$, called an \emph{antimorphism}.

A split graph may have more than one split partition; e.g., a complete graph on ${n}$ vertices has $n + 1$ different split partitions.  
\begin{lemma}\label{lem:unique-split-partition}
  A self-complementary split graph on $4 k$ vertices has a unique split partition and it is
    \begin{equation}
    \label{eq:1}
    \left\{ v\mid d(v) \ge 2k \right\}
    \uplus \left\{ v\mid d(v) < 2k \right\}.
  \end{equation}
\end{lemma}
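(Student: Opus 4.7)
The plan rests on the classical identity $\omega(H) + \alpha(H) \in \{n, n+1\}$ for any split graph $H$ on $n$ vertices: given a split partition $K \uplus I$, we have $\omega(H) \ge |K|$ and $\alpha(H) \ge |I|$, so $\omega(H) + \alpha(H) \ge n$; on the other hand, a clique and an independent set intersect in at most one vertex, so $\omega(H) + \alpha(H) \le n + 1$. Since $G \cong \overline G$ and cliques of $\overline G$ correspond to independent sets of $G$, we have $\omega(G) = \alpha(G)$, so $\omega(G) + \alpha(G)$ is even. With $n = 4k$ even and $n+1$ odd, this forces $\omega(G) + \alpha(G) = 4k$, hence $\omega(G) = \alpha(G) = 2k$.

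Next, in any split partition $K \uplus I$ of $G$, the clique $K$ satisfies $|K| \le \omega(G) = 2k$ and the independent set $I$ satisfies $|I| \le \alpha(G) = 2k$; since $|K| + |I| = 4k$, both parts have size exactly $2k$. To pin down the side on which each vertex lies, I would argue by contradiction against the extremality of $\omega$ and $\alpha$. If some $v \in I$ had $d(v) = 2k$, then $v$ would be complete to $K$, so $K \cup \{v\}$ would be a clique of size $2k+1$, contradicting $\omega(G) = 2k$; hence every $v \in I$ satisfies $d(v) < 2k$. Symmetrically, if some $v \in K$ had $d(v) = 2k - 1$, it would have no neighbor in $I$, so $I \cup \{v\}$ would be an independent set of size $2k + 1$, contradicting $\alpha(G) = 2k$; hence every $v \in K$ satisfies $d(v) \ge 2k$. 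Therefore every split partition of $G$ must coincide with the one in~\eqref{eq:1}, which simultaneously establishes its validity and its uniqueness.

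There is no real obstacle in this argument: the entire proof hinges on the parity conflict between $\omega(G) + \alpha(G)$ being even and the alternative value $n + 1 = 4k + 1$ being odd, after which the membership of each vertex is forced by a routine boundary analysis on the split partition.
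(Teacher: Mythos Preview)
Your proof is correct, and the endgame---showing that a vertex of $I$ with degree $2k$ would extend $K$ to a $(2k{+}1)$-clique, and dually for $K$---matches the paper's almost verbatim. The genuine difference lies in how you reach $\omega(G)=\alpha(G)=2k$. The paper fixes an antimorphism $\sigma$ and uses the degree pairing $d(v)+d(\sigma(v))=4k-1$ to conclude that exactly $2k$ vertices have degree at least $2k$, whence no clique or independent set can exceed size $2k$. You instead invoke the F\"oldes--Hammer identity $\omega+\alpha\in\{n,n+1\}$ for split graphs together with the parity observation that $\omega(G)=\alpha(G)$ forces the sum to be even, ruling out $n+1=4k+1$. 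Your route is slightly more conceptual and avoids ever touching an explicit antimorphism; the paper's route is more self-contained and yields the exact degree count $|\{v:d(v)\ge 2k\}|=2k$ directly, which it reuses in the contradiction step. One small point worth making explicit in your write-up: when you pass from ``$d(v)=2k$ is impossible for $v\in I$'' to ``$d(v)<2k$ for all $v\in I$'', you are silently using $d(v)\le|K|=2k$ (since $N(v)\subseteq K$); this is obvious but deserves a clause.
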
  
\begin{proof}
  Let $G$ be a self-complementary split graph with $4k$ vertices, and $\sigma$ an antimorphism of $G$.
  By definition, for any vertex $v\in V(G)$, we have $d(v) + d(\sigma(v)) = 4 k - 1$.  Thus,
  \[
    \min (d(v), d(\sigma(v))) \le 2 k - 1 < 2k \le \max (d(v), d(\sigma(v))).
  \]
  As a result, $G$ does not contain any clique or independent set of order $2k + 1$.
  Suppose for contradictions that there exists a split partition $K\uplus I$ of $G$ different from \eqref{eq:1}.
  There must be a vertex $x\in I$ with $d(x) \ge 2k$.
  We must have $d(x) = 2 k$ and $N(x) \subseteq K$.
  But then there are at least $|N[x]| = 2 k + 1$ vertices having degree at least $2 k$, a contradiction.
\end{proof}

We represent an antimorphism as the product of disjoint cycles 
$\sigma = \sigma_{1} \sigma_{2} \cdots \sigma_{p}$,
where $\sigma_{i} = (v_{i 1} v_{i 2} \cdots)$ for all $i$.
Sachs and Ringel~\cite{sachs-62-self-complementary-graphs,ringel-63-self-complementary} independently showed that there can be at most one vertex $v$ fixed by an antimorphism $\sigma$, i.e., $\sigma(v) = v$.
For any other vertex $u$, the smallest number $k$ satisfying $\sigma^k(u) = u$ has to be a multiplier of four. Gibbs~\cite{gibbs-74-self-complementary} observed that if a vertex $v$ has $d$ neighbors in $G$, then the degree of $\sigma(v)$ in $G$ is $n - 1 - d$ where $n$ is the order of $G$. It implies that if $v$ is fixed by $\sigma$, then its degree in $G$ is $(n-1)/2$. Furthermore, the vertices in every cycle of $\sigma$ with a length of more than one alternate in degrees $d$ and $n-1-d$.

\begin{lemma}[{\cite{sachs-62-self-complementary-graphs,ringel-63-self-complementary}}]
  \label{lem:length-of-circular-permutation}
  If $\sigma$ is an antimorphism of a self-complementary graph, then the length of each cycle in $\sigma$ is either $1$ or a multiplier of $4$.
\end{lemma}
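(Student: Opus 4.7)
The plan is to unwind the defining property of an antimorphism: for any pair of distinct vertices $u, w$, the unordered pair $\{u,w\}$ is an edge of $G$ if and only if $\{\sigma(u),\sigma(w)\}$ is a non-edge. Iterating this, the edge/non-edge status of a pair of vertices flips with every application of $\sigma$. Pick any cycle of $\sigma$ of length $\ell \ge 2$, say $(v\ \sigma(v)\ \cdots\ \sigma^{\ell-1}(v))$. The goal is to force $4 \mid \ell$ by applying this flip principle to two natural sequences of pairs built from the $\sigma$-orbit of $v$.

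First, I would rule out $\ell=2$ as a degenerate case: then $\{v,\sigma(v)\}$ and $\{\sigma(v),\sigma^2(v)\}$ coincide as unordered pairs, yet the flip principle says their statuses are opposite, a contradiction. For $\ell \ge 3$, I would consider the $\ell$ pairwise distinct consecutive pairs $\{\sigma^i(v),\sigma^{i+1}(v)\}$ for $i=0,1,\ldots,\ell-1$, whose statuses alternate between edge and non-edge under $i \mapsto i+1$. Since the pair at $i=\ell$ coincides with the pair at $i=0$, consistency forces $\ell$ to be even.

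To upgrade ``even'' to ``divisible by $4$'', I would apply the same flip principle to the \emph{antipodal} pair $P = \{v,\sigma^{\ell/2}(v)\}$, which consists of two distinct vertices since $0 < \ell/2 < \ell$. Under $\sigma^{\ell/2}$, the image of $P$ is $\{\sigma^{\ell/2}(v), \sigma^{\ell}(v)\} = P$ itself, while the status of $P$ has been flipped $\ell/2$ times. Equating the two expressions for the status of $P$ forces $\ell/2$ to be even, i.e., $4 \mid \ell$, as desired.

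There is no real obstacle in this argument; once the flip principle is isolated, everything reduces to a parity count. The only care needed is to separate the degenerate case $\ell=2$, where consecutive pairs collapse onto one another, before running the counting argument, and to observe that the antipodal pair genuinely involves two distinct vertices so that its status is a well-defined object to compare against itself.
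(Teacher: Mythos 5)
Your proof is correct: the flip principle is exactly the defining property of an antimorphism, the consecutive-pairs count forces even cycle length, and the antipodal-pair argument upgrades this to divisibility by $4$ (it even subsumes the $\ell=2$ case, since there $\ell/2=1$ is odd). The paper itself states this lemma only with a citation to Sachs and Ringel and gives no proof, and your argument is essentially the standard parity argument from those sources, so there is nothing to correct.
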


For any subset of cycles in $\sigma$, the vertices within those cycles induce a subgraph that is self-complementary.  Indeed, the selected cycles themselves act as an antimorphism for the subgraph.

\begin{proposition}[\cite{gibbs-74-self-complementary}]
  \label{prop:every-subset-of-cycles-are-scs}
  Let $G$ be a self-complementary graph and $\sigma$ an antimorphism of $G$. For any subset of cycles in $\sigma$, the vertices within those cycles induce a self-complementary graph.
\end{proposition}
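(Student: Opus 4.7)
The plan is to observe that the selected cycles, concatenated, themselves form an antimorphism of the induced subgraph, exactly as hinted in the paragraph preceding the statement. First, I would let $C_{1}, \ldots, C_{r}$ denote the chosen cycles of $\sigma$ and write $U := V(C_{1}) \cup \cdots \cup V(C_{r})$ for the union of their vertex sets. Because each $C_{i}$ is an orbit of the permutation $\sigma$, the set $U$ is $\sigma$-invariant, i.e., $\sigma(U) = U$. Hence the restriction $\tau := \sigma|_{U}$ is a well-defined permutation of $U$, whose cycle decomposition is precisely $C_{1} C_{2} \cdots C_{r}$.

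Next, I would verify that $\tau$ is an antimorphism of the induced subgraph $H := G[U]$. Take any two distinct vertices $u, v \in U$. Since $H$ is an induced subgraph, $uv \in E(H)$ iff $uv \in E(G)$. Because $\sigma$ is an antimorphism of $G$, the latter holds iff $\sigma(u)\sigma(v) \notin E(G)$; and since $\tau(u) = \sigma(u)$ and $\tau(v) = \sigma(v)$ both lie in $U$, this is in turn equivalent to $\tau(u)\tau(v) \notin E(H)$. Chaining these equivalences gives $uv \in E(H)$ iff $\tau(u)\tau(v) \notin E(H)$, so $\tau$ is an isomorphism from $H$ to $\overline{H}$ and $H$ is self-complementary.

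There is no real obstacle here: the proof is a one-line verification that restricting an antimorphism to a $\sigma$-invariant vertex set still exchanges adjacency with nonadjacency. The only point worth stating explicitly, and the reason the hypothesis is phrased as ``a subset of cycles'' rather than ``a subset of vertices,'' is that we must have $\sigma(U) = U$ in order for $\tau$ to be a permutation of $U$; this is precisely guaranteed when $U$ is a union of whole cycles of $\sigma$.
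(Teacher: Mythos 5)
Your proof is correct and takes exactly the approach the paper uses: the paper justifies this proposition only by the remark that the selected cycles themselves act as an antimorphism of the induced subgraph, which is precisely what you verify by restricting $\sigma$ to the $\sigma$-invariant union of the chosen cycles. Your detailed check that the restriction exchanges adjacency and nonadjacency on the induced subgraph is the full argument, carried out carefully.
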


The following observation correlate self-complementary split graphs having even and odd orders.

\begin{proposition}\label{prop:scs-odd}
  Let $G$ be a split graph on $4k+1$ vertices.
  If $G$ is self-complementary, then $G$ has exactly one vertex $v$ of degree $2k$, and $G-v$ is also self-complementary.
\end{proposition}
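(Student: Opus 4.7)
The plan is to produce the candidate vertex $v$ from an antimorphism $\sigma$ of $G$, verify that deleting it yields a self-complementary split graph on $4k$ vertices, and then use the already-established uniqueness of the split partition there (Lemma~\ref{lem:unique-split-partition}) to pin down the neighborhood of $v$ in $G$ and hence rule out any other degree-$2k$ vertex.

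Concretely, because the order $n = 4k+1$ is odd and Lemma~\ref{lem:length-of-circular-permutation} forces every cycle of $\sigma$ to have length $1$ or a multiple of $4$, the permutation $\sigma$ has exactly one fixed vertex $v$; Gibbs' observation then gives $d_G(v) = (n-1)/2 = 2k$. Since $\sigma(v) = v$, the restriction of $\sigma$ to $V(G) \setminus \{v\}$ is an antimorphism of $G - v$, and $G - v$ is a split graph as an induced subgraph of one. So $G - v$ is a self-complementary split graph on $4k$ vertices, and Lemma~\ref{lem:unique-split-partition} endows it with a unique split partition $K_0 \uplus I_0$, where $K_0$ consists of the vertices of $G - v$ of degree at least $2k$ and $I_0$ the rest, each of size $2k$.

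The key step is the following observation: any split partition of $G$ restricts to a split partition of $G - v$, so by the uniqueness above it must be obtained from $K_0 \uplus I_0$ by attaching $v$ to one of the two parts. If $v$ is placed in the clique side, then $v$ is adjacent to every vertex of $K_0$, which together with $d_G(v) = 2k = |K_0|$ forces $v$ to have no neighbor in $I_0$; symmetrically, if $v$ is placed in the independent side, then $v$ is non-adjacent to all of $I_0$ and the degree equation again forces $v$ to be complete to $K_0$. Either way, the neighborhood of $v$ in $G$ is exactly $K_0$.

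With this, the uniqueness is immediate: for every $u \in K_0$ one has $d_G(u) = d_{G - v}(u) + 1 \ge 2k + 1$, and for every $u \in I_0$ one has $d_G(u) = d_{G - v}(u) \le 2k - 1$, so $v$ is the only vertex of degree $2k$, and $G - v$ is self-complementary as already noted. I do not anticipate a real obstacle here: the argument is essentially a one-vertex reduction to the even-order case, and the only place where care is needed is in checking that the split partition of $G$ really does restrict to the unique split partition of $G - v$, which is just the observation that removing a vertex from a clique (resp.\ independent set) leaves a clique (resp.\ independent set).
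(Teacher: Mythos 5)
Your proof is correct and takes essentially the same route as the paper's: extract the unique fixed vertex $v$ of an antimorphism, observe that $G-v$ is a self-complementary split graph whose unique split partition is given by Lemma~\ref{lem:unique-split-partition}, and compare degrees across the two sides to isolate $v$. The only difference is one of explicitness: you justify via the restriction of a split partition of $G$ that $N(v)$ is exactly the clique side $K_0$, a point the paper's proof asserts more tersely (``the degree of $v$ is $|K|=2k$'') and then uses implicitly in its degree bounds.
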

\begin{proof}
  Let $\sigma$ be an antimorphism of $G$.
  By Lemma~\ref{lem:length-of-circular-permutation}, there exists a cycle of length one in $\sigma$; let it be $(v)$.
  We can write $\sigma = \sigma_1 \dots \sigma_p (v)$.
  By Proposition~\ref{prop:every-subset-of-cycles-are-scs}, $G - v$ is a self-complementary with $\sigma = \sigma_1 \dots \sigma_p$ as an antimorphism.
  Since it is an induced subgraph of a split graph, it is a self-complementary split graph, and has a unique split partition $K\uplus I$ by Lemma~\ref{lem:unique-split-partition}.
  The degree of $v$ is $|K| = 2 k$.
  On the other hand, every vertex in $K$ has at least one neighbor in $I$: otherwise, we can move it from $K$ to $I$ to get another split partition of $G - v$.  Thus, $d(x) > 2k$ for each vertex $x\in K$.
  In a similar way, we can conclude that $d(x) < 2k$ for each vertex $x\in I$.
\end{proof}

A \emph{pseudo-split graph} is either a split graph, or a graph whose vertex set can be partitioned into a clique $K$, an independent set $I$, and a set $C$ that (1) induces a $C_{5}$; (2) is complete to $K$; and (3) is nonadjacent to $I$.
We say that $K\uplus I \uplus C$ is a \emph{pseudo-split partition} of the graph, where $C$ may or may not be empty.
If $C$ is empty, then $K\uplus I$ is a split partition of the graph.  Otherwise, the graph has a unique pseudo-split partition.
Similar to split graphs, the complement of a pseudo-split graph remains a pseudo-split graph.

\begin{proposition}\label{lem:split-pseudo-split}
  Let $G$ be a self-complementary pseudo-split graph with a pseudo-split partition $K \uplus I \uplus C$.  If $C\ne \emptyset$, then $G-C$ is a self-complementary split graph of an even order.
\end{proposition}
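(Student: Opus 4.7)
The plan is to take an antimorphism $\sigma$ of $G$ and exploit the uniqueness of the pseudo-split partition when $C$ is nonempty. Since $C_{5}$ is self-complementary and complementation turns cliques into independent sets (and preserves the complete/nonadjacent attachment patterns, up to swapping $K$ and $I$), the complement $\overline{G}$ admits the pseudo-split partition $I \uplus K \uplus C$ in which $I$ is now the clique, $K$ the independent set, and $C$ again induces a $C_{5}$. As $C \ne \emptyset$, this partition of $\overline{G}$ is unique, and likewise for $G$. Therefore any antimorphism $\sigma$ must, as a bijection of vertex sets, send $K$ to $I$, send $I$ to $K$, and send $C$ to $C$.

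Given this, the restriction $\sigma' := \sigma|_{K \cup I}$ is a permutation of $V(G-C)$. For $u,v \in K \cup I$, we have $uv \in E(G)$ iff $\sigma(u)\sigma(v) \notin E(G)$; since $\sigma(u),\sigma(v) \in K\cup I$, adjacency in $G$ and in $G-C$ coincides for these pairs. Hence $\sigma'$ is an antimorphism of $G-C$, so $G-C$ is self-complementary. Being an induced subgraph of the pseudo-split graph $G$ that avoids all of $C$, it is clearly a split graph with split partition $K \uplus I$.

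It remains to check that $|V(G-C)| = |K|+|I|$ is even. By Lemma~\ref{lem:length-of-circular-permutation}, every cycle of $\sigma$ has length either $1$ or a multiple of $4$. Because $\sigma(C)=C$ and $|C|=5$, the cycles of $\sigma$ contained in $C$ partition a set of size $5$ into such parts, which forces exactly one fixed point and one $4$-cycle inside $C$. Since $\sigma$ has at most one fixed point overall, no vertex of $K \cup I$ is fixed by $\sigma$, and every cycle of $\sigma$ meeting $K\cup I$ lies in $K\cup I$ (by the previous paragraph) and has length divisible by $4$. Thus $|K|+|I|$ is a multiple of $4$, completing the proof.

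The only delicate step is the first: verifying that $\sigma$ respects the three-way partition. Once one has the uniqueness of the pseudo-split partition when $C \ne \emptyset$ (noted just before the statement), this is immediate, and the remainder is a bookkeeping argument on the cycle structure of $\sigma$.
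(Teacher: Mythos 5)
Your proof is correct and takes essentially the same route as the paper: show that any antimorphism $\sigma$ fixes $C$ setwise and swaps $K$ with $I$, then restrict $\sigma$ to $K\cup I$ to get an antimorphism of $G-C$. The only differences are cosmetic — the paper gets $\sigma(C)=C$ from the observation that the only induced $C_5$ in $G$ and $\overline{G}$ is the one on $C$, whereas you invoke uniqueness of the pseudo-split partition, and the even order follows immediately from $|K|=|\sigma(K)|=|I|$ without your cycle-length bookkeeping.
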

\begin{proof}
  Let $\sigma$ be an antimorphism of $G$.
  In both $G$ and its complement, the only $C_{5}$ is induced by $C$.
  Thus, $\sigma(C) = C$.
  Since $C$ is complete to $K$ and nonadjacent to $I$, it follows that $\sigma(K) = I$ and $\sigma(I) = K$.
  Thus, $G - C$ is a self-complementary graph.
  It is clearly a split graph and has an even order.
\end{proof}

In the rest of this section, we are exclusively concerned with partitions of the vertex set of a graph $G$ into four nonempty subsets.
A partition $\mathcal{P} = \{V_{1},V_{2},V_{3},V_{4}\}$ of $V(G)$ is a \emph{rectangle partition} if $V_{1}$ is complete to $V_{2}$ and nonadjacent to $V_{3}$, while $V_{4}$ is complete to $V_{3}$ and nonadjacent to $V_{2}$, or a \emph{diamond partition} if $V_{1}$ is complete to $V_{3}$ while $V_{2}$ is nonadjacent to $V_{4}$.
See Figure~\ref{fig:model}.
Trotignon \cite{trotignon-05-self-complementary-graphs} conjectured that every $C_{5}$-free self-complementary graph $G$ admits one of the two partitions.  
We prove Trotignon's conjecture on self-complementary split graphs.  

\begin{lemma} \label{lem:scs-diamond-partition}
  Every self-complementary split graph $G$ admits a diamond partition.
  If $G$ has an even order, then it admits a diamond partition that is self-complementary.
\end{lemma}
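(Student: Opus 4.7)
The plan is to handle the two claims in turn. For the first (existence of some diamond partition), I exploit the (essentially) unique split partition: by Lemma~\ref{lem:unique-split-partition} in the even-order case and by Proposition~\ref{prop:scs-odd} in the odd-order case, $G$ admits a split partition $K \uplus I$ with $|K|, |I| \ge 2$ whenever $|V(G)| \ge 4$. Partitioning $K$ arbitrarily into two nonempty subsets $V_1$ and $V_3$, and $I$ into two nonempty subsets $V_2$ and $V_4$, then yields a diamond partition, because $V_1 \cup V_3 \subseteq K$ is a clique (so $V_1$ is complete to $V_3$) and $V_2 \cup V_4 \subseteq I$ is independent (so $V_2$ is nonadjacent to $V_4$).

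Now suppose $|V(G)| = 4k$ and fix an antimorphism $\sigma$ of $G$. Since any fixed point of $\sigma$ would need degree $(n-1)/2$, which is not an integer when $n$ is even, Lemma~\ref{lem:length-of-circular-permutation} forces every cycle of $\sigma$ to have length a positive multiple of four; Gibbs' observation recalled in Section~\ref{sec:pre} then tells us the vertices along each cycle alternate between degrees $d$ and $n-1-d$, hence by Lemma~\ref{lem:unique-split-partition} between $K$ and $I$. For each cycle $(u_0, u_1, \ldots, u_{4m-1})$ of $\sigma$, indexed so that $u_0 \in K$, I assign the vertex $u_j$ to the part $V_{(j \bmod 4)+1}$. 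Aggregating over all cycles, each $V_i$ receives $m \ge 1$ vertices per cycle and is therefore nonempty, while $V_1 \cup V_3 = K$ and $V_2 \cup V_4 = I$, so $\{V_1, V_2, V_3, V_4\}$ is a diamond partition of $G$.

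Within each cycle $\sigma(u_j) = u_{j+1}$, so $\sigma(V_i) = V_{(i \bmod 4)+1}$: the antimorphism cyclically rotates $V_1 \to V_2 \to V_3 \to V_4 \to V_1$. This rotation swaps the complete pair $\{V_1,V_3\}$ with the nonadjacent pair $\{V_2,V_4\}$, which is exactly consistent with $\sigma$ reversing adjacencies, so the same unlabeled partition is a diamond partition of $\bar{G}$ as well, witnessing that it is self-complementary. The main obstacle is precisely this last requirement: since $\sigma$ swaps $K$ and $I$, any $\sigma$-invariant partition must mix the two sides, and the mod-$4$ indexing along $\sigma$-cycles is the natural way to achieve such a mixing while still placing $V_1 \cup V_3$ inside $K$ and $V_2 \cup V_4$ inside $I$.
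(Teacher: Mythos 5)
Your proof is correct and follows essentially the same route as the paper: the arbitrary bipartitions of $K$ and of $I$ give the general diamond partition, and in the even-order case the mod-$4$ assignment along the cycles of an antimorphism (with each cycle started in $K$) yields $V_1\cup V_3=K$, $V_2\cup V_4=I$ and $\sigma(V_i)=V_{i+1}$, hence a self-complementary diamond partition. Your added justifications (no fixed point, degree alternation forcing the $K/I$ alternation, nonemptiness of the parts) merely make explicit what the paper leaves implicit.
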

\begin{proof}
  Let $K\uplus I$ be a split partition of $G$.
  For any proper and nonempty subset $K'\subseteq K$ and proper and nonempty subset $I'\subseteq I$, the partition $K', K\setminus K', I', I\setminus I'$
  is a diamond partition.

  Now suppose that the order of $G$ is $4 k$.
  We fix an arbitrary antimorphism $\sigma = \sigma_{1}\sigma_{2}\cdots \sigma_{p}$ of $G$.
  We may assume without loss of generality that for all $i = 1, \ldots, p$, the first vertex in $\sigma_{i}$ is in $K$.
  For $j= 1, \ldots, |\sigma_{i}|$, we assign the $j$th vertex of $\sigma_{i}$ to $V_{j\pmod 4}$.  For $j = 1, \ldots, 4$, we have $\sigma(V_{j}) = V_{j+1\pmod 4}$.
  Moreover, $V_{1}\cup V_{3} = K$ and $V_{2}\cup V_{4} = I$.
  Thus, $\{V_{1}, V_{2}, V_{3}, V_{4}\}$ is a self-complementary diamond partition of $G$.
\end{proof}

For a positive integer $k$, let $Z_{k}$ denote the graph obtained from a $P_{4}$ as follows.  We substitute each degree-one vertex with an independent set of $k$ vertices, and each degree-two vertex with a clique of $k$ vertices.

\begin{lemma}\label{lem:scs-rectangle-partition}
   A self-complementary split graph has a rectangle partition if and only if it is isomorphic to $Z_{k}$.
\end{lemma}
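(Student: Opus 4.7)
The plan is to verify the two directions separately. For the ``if'' direction, I would exhibit a cyclic antimorphism of $Z_k$ that permutes its four parts as $V_1 \to V_3 \to V_4 \to V_2 \to V_1$ (using any bijection between the equally-sized parts); its defining four parts then directly realize a rectangle partition of $Z_k$, with the $P_4$ backbone giving the required completeness/nonadjacency pattern.

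For the converse, let $G$ be a self-complementary split graph with a rectangle partition $\{V_1, V_2, V_3, V_4\}$ and a split partition $K \uplus I$. First I show that, after possibly applying the natural rectangle symmetry swapping $V_1 \leftrightarrow V_4$ with $V_2 \leftrightarrow V_3$, we have $V_1, V_4 \subseteq I$ and $V_2, V_3 \subseteq K$. The argument: since $V_1$ is nonadjacent to $V_3$ while $K$ is a clique, at most one of $V_1, V_3$ meets $K$; dually, since $V_1$ is complete to $V_2$ while $I$ is independent, at most one of $V_1, V_2$ meets $I$. The analogous constraints for the pairs $(V_2, V_4)$ and $(V_3, V_4)$, combined with the fact that each $V_i$ is nonempty, combine to force the claimed refinement.

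Writing $a, b, c, d$ for the sizes of $V_1, V_2, V_3, V_4$, the graph $G$ is now completely determined by the tuple $(a,b,c,d)$; denote this graph by $Z[a,b,c,d]$. A direct computation shows that $\overline{Z[a,b,c,d]} \cong Z[b,d,a,c]$: in the complement the clique becomes $V_1 \cup V_4$ and the independent set becomes $V_2 \cup V_3$, and the relabeling $(W_1, W_2, W_3, W_4) := (V_2, V_4, V_1, V_3)$ exhibits a $Z$-shape with the asserted sizes. Hence $Z[a,b,c,d] \cong Z[b,d,a,c]$ as abstract graphs.

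Finally, I would establish the rigidity of the $Z$-shape: any isomorphism $Z[a,b,c,d] \cong Z[a',b',c',d']$ must send the independent-side parts $\{V_1, V_4\}$ to $\{V_1', V_4'\}$ as a set (the former have degrees strictly less than the clique size, the latter strictly more), and its restriction to the bipartite structure between the two sides forces the image of each individual $V_i$, yielding $(a',b',c',d') \in \{(a,b,c,d), (d,c,b,a)\}$. Applying this rigidity to $Z[a,b,c,d] \cong Z[b,d,a,c]$ gives the two systems $(a,b,c,d) = (b,d,a,c)$ and $(a,b,c,d) = (c,a,d,b)$; each collapses to $a = b = c = d$. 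Writing $k$ for this common value, we obtain $|V(G)| = 4k$ and $G \cong Z_k$ (which incidentally rules out the odd-order case without a separate argument). The main technical obstacle will be the rigidity claim in the degenerate cases (e.g., $a = d$ with $b \ne c$, where degree alone fails to distinguish $V_2$ from $V_3$); the distinguishing invariant is then the size of the opposite-side neighborhood of each part.
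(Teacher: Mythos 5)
Your proposal is correct in substance but takes a genuinely different route from the paper's. Both arguments start the same way, aligning the rectangle partition with the split partition so that (up to relabelling) $K=V_2\cup V_3$ and $I=V_1\cup V_4$; after that the paper stays with the antimorphism: it observes that any antimorphism must carry $V_2$ to $V_1$ or $V_4$ and compares the number of edges between $K$ and $I$ in $G$ and $\overline G$, which pins down the part sizes in a few lines. You instead note that the aligned graph is completely determined by its four part sizes, compute $\overline{Z[a,b,c,d]}\cong Z[b,d,a,c]$ inside the family, and prove a rigidity statement (isomorphic members of the family have identical or reversed size tuples), from which $a=b=c=d$ follows; this is longer but more self-contained -- it uses neither the antimorphism nor Lemma~\ref{lem:unique-split-partition}, it classifies all isomorphisms within the family, and it handles the odd-order possibility without a separate argument, whereas the paper's edge count is quicker but leaves more detail to the reader. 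Three small repairs to your write-up: the relabelling needed to reduce to $V_2,V_3\subseteq K$ is $(V_1,V_2,V_3,V_4)\mapsto(V_2,V_1,V_4,V_3)$, not $V_1\leftrightarrow V_4$, $V_2\leftrightarrow V_3$, since the latter fixes the two sides setwise; in the rigidity step the clique-side degrees are only at least $b+c$ (with equality when $a=1$ or $d=1$), so the clean separation is that $V_1\cup V_4$ are exactly the simplicial vertices, which an isomorphism must preserve; and when $a=d$ the size of the opposite-side neighbourhood fails to distinguish $V_2$ from $V_3$ just as degrees do, but no distinguishing invariant is actually needed -- since the neighbourhood of each simplicial twin class is exactly one of the two clique parts, any isomorphism matches the four parts coherently, and the only alternative matching is precisely the reversed tuple allowed in your claim.
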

\begin{proof}
  The sufficiency is trivial, and we consider the necessity.
  Suppose that $G$ is a self-complementary split graph and it has a rectangle partition $\{V_{1}, V_{2}, V_{3}, V_{4}\}$.
  Let $K\uplus I$ be a split partition of $G$.
  There are at least one edge and at least one missing edge between any three parts.  Thus, vertices in $K$ are assigned to precisely two parts in the partition.
  By the definition of rectangle partition, $K$ is either $V_{2}\cup V_{3}$ or $V_{1}\cup V_{4}$.
  Assume without loss of generality that $K = V_{2}\cup V_{3}$.
  Since $V_{2}$ is complete to $V_{1}$ and nonadjacent to $V_{4}$, any antimorphism of $G$ maps $V_{2}$ to either $V_{1}$ or $V_{4}$.
  If $|V_{2}| \ne |V_{3}|$, then the numbers of edges between $K$ and $I$ in $G$ and $\overline G$ are different.  This is impossible.
\end{proof}

\section{Forcibly self-complementary degree sequences}
\label{sec:degree-sequences}

The \emph{degree sequence}\index{degree sequence} of a graph $G$ is the sequence of degrees of all vertices, listed in non-increasing order, and $G$ is a \emph{realization} of this degree sequence.
For our purpose, it is more convenient to use a compact form of degree sequences where the same degrees are grouped:
\[
  \left(d_i^{n_i}\right)_{i=1}^\ell = 
  \left(d_{1}^{n_{1}}, \dots, d_{\ell}^{n_{\ell}}\right) =
  \left( \underbrace{d_{1}, \ldots, d_{1}}_{n_1}, \underbrace{d_{2}, \ldots, d_{2}}_{n_2},  \ldots, \underbrace{d_{\ell}, \ldots, d_{\ell}}_{n_\ell}\right).
\]
Note that we always have $d_1 > d_2 > \cdots > d_\ell$.  
For example, the degree sequences of the first two graphs in Figure~\ref{fig:split-sc-8-vertices} are both
\[
  \left(5^4, 2^4\right) = (5,5,5,5,2,2,2,2).
\]
These two graphs are not isomorphic; thus, a degree sequence may have non-isomorphic realizations. 

For four vertices $v_{1}$, $v_{2}$, $v_{3}$, and $v_{4}$ such that $v_{1}$ is adjacent to $v_{2}$ but not to $v_{3}$ while $v_{4}$ is adjacent to $v_{3}$ but not to $v_{2}$, 
the operation of replacing $v_{1} v_{2}$ and $v_{3} v_{4}$ with $v_{1} v_{3}$ and $v_{2} v_{4}$ is a \emph{2-switch}, denoted as
\[
  (v_{1} v_{2}, v_{3} v_{4}) \rightarrow (v_{1} v_{3}, v_{2} v_{4}).
\]
See Figure~\ref{fig:2-switches}.
It is easy to check that this operation does not change the degree of any vertex.
Indeed, it is well known that any two graphs of the same degree sequence can be transformed into each other by 2-switches~\cite{ryser-57-binary-matrices}.

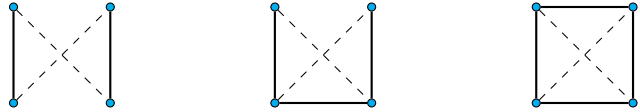
\begin{figure}[h]
  \centering
  \begin{subfigure}[b]{0.2\textwidth}
    \centering
    \begin{tikzpicture}[scale=.6]
      \def\n{4}
      \def\radius{1.5}
      \foreach \i in {1,..., \n} {
        \pgfmathsetmacro{\angle}{ (\i + .5) * (360 / \n)}
        \node[filled vertex] (v\i) at (\angle:\radius) {};
      }
      \draw[thick] (v1) -- (v2) (v3) -- (v4);
      \draw[dashed] (v1) -- (v3) (v2) -- (v4);
    \end{tikzpicture}
  \end{subfigure}
  \,
  \begin{subfigure}[b]{0.2\textwidth}
    \centering
    \begin{tikzpicture}[scale=.6]
      \def\n{4}
      \def\radius{1.5}
      \foreach \i in {1,..., \n} {
        \pgfmathsetmacro{\angle}{ (\i + .5) * (360 / \n)}
        \node[filled vertex] (v\i) at (\angle:\radius) {};
      }
      \draw[thick] (v1) -- (v2) (v3) -- (v4);
      \draw[thick] (v2) -- (v3);      
      \draw[dashed] (v1) -- (v3) (v2) -- (v4);
    \end{tikzpicture}
  \end{subfigure}
  \,
  \begin{subfigure}[b]{0.2\textwidth}
    \centering
    \begin{tikzpicture}[scale=.6]
      \def\n{4}
      \def\radius{1.5}
      \foreach \i in {1,..., \n} {
        \pgfmathsetmacro{\angle}{ (\i + .5) * (360 / \n)}
        \node[filled vertex] (v\i) at (\angle:\radius) {};
      }
      \draw[thick] (v1) -- (v2) (v3) -- (v4);
      \draw[thick] (v2) -- (v3) (v1) -- (v4);      
      \draw[dashed] (v1) -- (v3) (v2) -- (v4);
    \end{tikzpicture}
  \end{subfigure}
  \caption{Illustrations for 2-switches.}
  \label{fig:2-switches}
\end{figure}

\begin{lemma}[\cite{ryser-57-binary-matrices}]
  \label{lem:2-switches}
  Two graphs have the same degree sequence if and only if they can be transformed into each other by a series of 2-switches.
\end{lemma}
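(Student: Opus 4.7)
The plan is to address the two directions separately. The \emph{if} direction is immediate: a 2-switch swaps two edges for two non-edges in such a way that each of the four involved vertices gains exactly one incidence and loses exactly one, preserving every vertex degree and hence the degree sequence. The only work is in the \emph{only if} direction, which I would prove by induction on $n = |V|$. The base cases $n \le 3$ are trivial since a degree sequence on that many vertices already determines the graph uniquely.

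For the inductive step, take two graphs $G_{1}, G_{2}$ on the same vertex set with a common degree sequence, and fix a vertex $v$ of maximum degree in both. I would first apply a sequence of 2-switches to $G_{1}$ to transform it into a graph $G_{1}'$ with $N_{G_{1}'}(v) = N_{G_{2}}(v)$. Once that is accomplished, $G_{1}' - v$ and $G_{2} - v$ have the same degree sequence on $V \setminus \{v\}$, so the inductive hypothesis gives a sequence of 2-switches transforming one into the other; these 2-switches use only vertices of $V \setminus \{v\}$ and therefore leave the edges incident to $v$ untouched, so concatenating them with the earlier sequence yields a 2-switch sequence from $G_{1}$ to $G_{2}$.

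The heart of the argument is the neighborhood-matching step, which I would carry out by repeatedly reducing $|N_{G_{1}}(v) \triangle N_{G_{2}}(v)|$. Given a mismatch, choose $a \in N_{G_{2}}(v) \setminus N_{G_{1}}(v)$ and $b \in N_{G_{1}}(v) \setminus N_{G_{2}}(v)$, which exist because $d_{G_{1}}(v) = d_{G_{2}}(v)$. If there is a vertex $c \notin \{v, a, b\}$ with $ac \in E(G_{1})$ and $bc \notin E(G_{1})$, then the 2-switch $(vb, ac) \to (va, bc)$ is legal in $G_{1}$ and strictly decreases $|N_{G_{1}}(v) \triangle N_{G_{2}}(v)|$ by $2$. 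Otherwise every $G_{1}$-neighbor of $a$ other than $b$ is also a $G_{1}$-neighbor of $b$; combining this containment with $v \in N_{G_{1}}(b) \setminus N_{G_{1}}(a)$ and the equalities $d_{G_{1}}(a) = d_{G_{2}}(a)$ and $d_{G_{1}}(b) = d_{G_{2}}(b)$, a short counting argument produces a helper vertex $c'$ at which a preliminary 2-switch in $G_{1}$ creates the configuration needed to apply the previous case.

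The main obstacle is exactly this degenerate case, where no single 2-switch involving $v, a, b$ directly achieves the exchange; handling it requires carefully exploiting both the maximum-degree choice of $v$ and the equality of the $a$- and $b$-degrees between $G_{1}$ and $G_{2}$ to produce the helper vertex $c'$ and to verify that the resulting two-step 2-switch never increases $|N_{G_{1}}(v) \triangle N_{G_{2}}(v)|$. Iterating the reduction terminates with $N_{G_{1}}(v) = N_{G_{2}}(v)$, and the outer induction then completes the proof.
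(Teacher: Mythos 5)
The paper does not prove this lemma at all; it is quoted from Ryser, so your attempt has to be judged against the classical argument. Your easy direction and your overall skeleton (induction on $n$ after fixing a vertex $v$ and matching its neighborhood, then deleting $v$) are the standard route, but the crucial step is exactly the one you leave unproved. In the degenerate case you describe, the containment $N_{G_1}(a)\setminus\{b\}\subseteq N_{G_1}(b)$ together with $v\in N_{G_1}(b)\setminus N_{G_1}(a)$ forces $d_{G_1}(a)<d_{G_1}(b)$, and the only extra information you invoke, the numerical equalities $d_{G_1}(a)=d_{G_2}(a)$ and $d_{G_1}(b)=d_{G_2}(b)$, tells you nothing about where a helper vertex $c'$ sits in $G_1$ (it only says $a$ has a neighbor and $b$ has a non-neighbor). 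It is not true in general that a single ``preliminary'' 2-switch is available without touching $v$'s edges: for instance, $b$ may be adjacent to every vertex except $a$, or $a$ may have $b$ as its unique neighbor, and then the configuration you need must be manufactured in several steps whose effect on $\lvert N_{G_1}(v)\,\triangle\, N_{G_2}(v)\rvert$ you have not controlled. So the ``short counting argument'' is a genuine gap, not a routine detail; choosing $v$ of maximum degree does not help, since the obstruction concerns the degrees of $a$ and $b$, not of $v$.

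The standard way to repair this is to give up matching $N_{G_2}(v)$ directly and instead match a canonical target: show that any realization can be 2-switched so that $v$ (of maximum degree $d$) becomes adjacent to the $d$ vertices of largest degree among the rest. There the wanted vertex $a$ always satisfies $d(a)\ge d(b)$ for the unwanted $b$, which makes your degenerate case impossible by exactly the counting you already did, and since 2-switches are reversible one concatenates $G_1\to\text{canonical}\to G_2$ before deleting $v$ and inducting. (Alternatively, one can argue on the symmetric difference $E(G_1)\,\triangle\,E(G_2)$ via alternating closed walks.) You should also state explicitly the harmless normalization that the two graphs may be placed on a common vertex set with $d_{G_1}(u)=d_{G_2}(u)$ for every $u$, since you use pointwise equality of degrees throughout.
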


The subgraph induced by the four vertices involved in a 2-switch operation must be a $2 K_{2}$, $P_{4}$, or $C_{4}$.  Moreover, after the operation, the four vertices induce an isomorphic subgraph.
Since a split graph $G$ cannot contain any $2 K_{2}$ or $C_{4}$~\cite{foldes-77-split-graphs}, a 2-switch must be done on a $P_{4}$.
In any split partition $K\uplus I$ of $G$, the two degree-one vertices of $P_{4}$ are from $I$, while the others from $K$.
The graph remains a split graph after this operation.
Thus, if a degree sequence has a realization that is a split graph, then all its realizations are split graphs \cite{foldes-77-split-graphs}.  A similar statement holds for pseudo-split graphs \cite{maffray-94-pseudo-split}.

We do not have a similar claim on degree sequences of self-complementary graphs. 
Clapham and Kleitman~\cite{clapham-76-self-complementary-degree-sequences} have fully characterized all such degree sequences, called \emph{potentially self-complementary degree sequences}.
A degree sequence is \emph{forcibly self-complementary} if all of its realizations are self-complementary.

\begin{proposition}\label{lem:basic-graphs}
  The following degree sequences are all forcibly self-complementary: $(0^1)$, $(2^2, 1^2)$, $(2^5)$, and $(5^4, 2^4)$.
\end{proposition}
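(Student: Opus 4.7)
The plan is to handle the four sequences in turn, with the first three being immediate and the last requiring a short structural argument. For $(0^1)$, the unique realization is $K_1$. For $(2^2,1^2)$, a brief case analysis on the neighborhoods of the two degree-$1$ vertices forces the unique realization to be $P_4$, and $\overline{P_4}\cong P_4$. For $(2^5)$, the only $2$-regular graph on five vertices is $C_5$, and $\overline{C_5}\cong C_5$.

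The main step is for $(5^4,2^4)$. Let $G$ be a realization, $H$ its set of four degree-$5$ vertices, $L$ its four degree-$2$ vertices, and let $e_H$, $e_{HL}$, $e_L$ count the edges inside $H$, between $H$ and $L$, and inside $L$. The degree-sum identities
\[
  2e_H + e_{HL} = 20, \qquad 2e_L + e_{HL} = 8
\]
yield $e_H - e_L = 6$. Combined with $e_H\le \binom{4}{2}=6$ and $e_L\ge 0$, this forces $e_H=6$, $e_L=0$, and $e_{HL}=8$. Consequently, $G$ is a split graph with split partition $H\uplus L$ (unique by Lemma~\ref{lem:unique-split-partition}), and the bipartite subgraph $B$ spanned by the $H$--$L$ edges is $2$-regular on $4+4$ vertices. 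The only such bipartite graphs are $C_8$ and $C_4\cup C_4$, giving exactly two realizations of $(5^4,2^4)$ up to isomorphism.

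To finish, I would verify that each of these two realizations is self-complementary. The complement $\overline{G}$ is again a split graph with its clique on $L$ and its independent set on $H$; its bipartite part between them is the bipartite complement of $B$. Writing $B$ via a $4\times 4$ biadjacency matrix $M$, a direct computation on $J-M$ (where $J$ is the all-ones matrix) shows that in both cases the bipartite complement of $B$ is isomorphic to $B$ itself after a permutation of rows and columns, producing an antimorphism of $G$. The main (mild) obstacle lies in this last bookkeeping --- essentially two $4\times 4$ matrix checks --- and in arguing that the bipartite isomorphism combines correctly with the swap of the clique and independent sides to give a full antimorphism of $G$.
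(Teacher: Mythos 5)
Your proof is correct, but it takes a genuinely different route from the paper. The paper's proof leans on Lemma~\ref{lem:2-switches}: it observes that any 2-switch applied to a realization of $(2^2,1^2)$ or $(2^5)$ returns an isomorphic graph, and that a 2-switch toggles between the two graphs of Figure~\ref{fig:split-sc-8-vertices}(a,b), so the set of self-complementary realizations is closed under 2-switches and therefore exhausts all realizations. You instead classify the realizations directly: uniqueness of the realization for the three small sequences, and for $(5^4,2^4)$ the edge-count $e_H-e_L=6$ forcing $e_H=6$, $e_L=0$, $e_{HL}=8$, so that the graph is a split graph whose bipartite part is $2$-regular on $4+4$ vertices, hence $C_8$ or $2C_4$ --- exactly the two graphs in Figure~\ref{fig:split-sc-8-vertices}(a,b). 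Your approach is more self-contained (no appeal to Ryser's 2-switch theorem) and yields as a by-product the explicit list of realizations, at the cost of the final verification that both realizations are self-complementary; the paper's version is shorter because the 2-switch machinery is already set up and reused throughout Section~\ref{sec:degree-sequences}. The deferred check you describe does go through: the bipartite complements of $C_8$ and of $2C_4$ (with parts of size four) are again $C_8$ and $2C_4$, and each admits a side-swapping isomorphism (e.g.\ a rotation by one along each cycle), which combined with exchanging the clique and independent set gives the required antimorphism --- your flagged concern about the side swap is exactly the right point to verify. Two minor remarks: the parenthetical appeal to Lemma~\ref{lem:unique-split-partition} is unnecessary (and slightly circular, since that lemma assumes self-complementarity); your argument only needs that $H$ is a clique and $L$ independent, which you have already derived from the edge counts.
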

\begin{proof}
  It is trivial for $(0^1)$.
  Applying a 2-switch operation to a realization of $(2^2, 1^2)$ or $(2^5)$ leads to an isomorphic graph.
  A 2-switch operation transforms the graph in Figure~\ref{fig:split-sc-8-vertices}(a) into Figure~\ref{fig:split-sc-8-vertices}(b), and vice versa.
  Thus, the statement follows from Lemma~\ref{lem:2-switches}.
\end{proof}

We take $p$ vertex-disjoint graphs $S_{1}$, $S_{2}$, $\ldots$, $S_{p}$, each of which is isomorphic to $P_{4}$, or one of the graphs in Figure~\ref{fig:share-degree-sequence}(a, b).
For $i = 1, \ldots, p$, let $K_{i}\uplus I_{i}$ denote the unique split partition of $S_{i}$.
Let $C$ be another set of $0$, $1$, or $5$ vertices.
We add all possible edges among $\bigcup_{i=1}^{p} K_{i}$ to make it a clique, and for each $i = 1, \ldots, p$, add all possible edges between $K_{i}$ and $\bigcup_{j=i+1}^{p} I_{j}$.\footnote{The reader familiar with threshold graphs may note its use here.
If we contract $K_{i}$ and $I_{i}$ into two vertices, the graph we constructed is a threshold graph.
  Threshold graphs have a stronger characterization by degree sequences.  Since a threshold graph free of $2K_{2}$, $P_{4}$, and $C_{4}$, no 2-switch is possible on it.
 Thus, the degree sequence of a threshold graph has a unique realization.
}
Finally, we add all possible edges between $C$ and $\bigcup_{i=1}^{p} K_{i}$,  and add edges to make $C$ a cycle if $|C| = 5$.
Let $\mathcal{E}$ denote the set of graphs that can be constructed as above.

\begin{lemma}\label{lem:elementary-graph}
  All graphs in $\mathcal{E}$ are self-complementary pseudo-split graphs, and their degree sequences are forcibly self-complementary.
\end{lemma}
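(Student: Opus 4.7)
The plan is to verify the three claims---pseudo-split, self-complementary, forcibly self-complementary---in that order. The pseudo-split property is by inspection of the construction: $K := \bigcup_{i=1}^{p} K_{i}$ is a clique; $I := \bigcup_{i=1}^{p} I_{i}$ is independent, because the construction adds no edge between distinct $I_{j}$'s and each $I_{j}$ is independent in $S_{j}$; and $C$ is either empty, a single vertex, or induces a $C_{5}$, and is by construction complete to $K$ and nonadjacent to $I$.

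For self-complementarity I would assemble an antimorphism $\sigma$ piecewise. Since $P_{4}$ and each of the graphs in Figure~\ref{fig:share-degree-sequence}(a, b) is self-complementary, every $S_{i}$ admits an antimorphism $\sigma_{i}$, which by Lemma~\ref{lem:unique-split-partition} must swap $K_{i}$ with $I_{i}$. On $C$ take $\sigma_{C}$ to be the identity when $|C|\le 1$, and the standard antimorphism of $C_{5}$ (a $4$-cycle together with a fixed vertex, consistent with Lemma~\ref{lem:length-of-circular-permutation}) when $|C|=5$. Set $\sigma := \sigma_{1}\sigma_{2}\cdots\sigma_{p}\sigma_{C}$. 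For pairs inside a single $S_{i}$ or inside $C$, that $\sigma$ inverts adjacency is immediate. For cross pairs, the adjacency in $G$ is governed by a simple rule depending only on the indices and on which of $K,I,C$ each endpoint lies in---``$K_{i}$-$K_{j}$ always edge,'' ``$I_{i}$-$I_{j}$ never edge,'' ``$K_{i}$-$I_{j}$ edge iff $i<j$,'' ``$C$-$K$ always edge,'' ``$C$-$I$ never edge''---and swapping $K$- with $I$-membership simultaneously at both endpoints (while fixing $C$-membership) flips that adjacency in every case.

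For the forcibly self-complementary assertion, by Lemma~\ref{lem:2-switches} it suffices to show that $\mathcal{E}$ is closed under $2$-switches. A pseudo-split graph contains no induced $2K_{2}$ or $C_{4}$ (a short case analysis on which of $K,I,C$ the four vertices lie in), so every $2$-switch acts on an induced $P_{4}$. The key lemma I would prove is that every induced $P_{4}$ in $G \in \mathcal{E}$ lies entirely within a single $S_{i}$ or entirely within $C$. Once this is in hand, a $2$-switch inside $S_{i}\cong P_{4}$ gives another $P_{4}$; inside an $8$-vertex $S_{i}$ it toggles between the two realizations of $(5^{4},2^{4})$, as in the proof of Proposition~\ref{lem:basic-graphs}; inside $C\cong C_{5}$ it merely relabels the $5$-cycle; in all cases the result is again in $\mathcal{E}$. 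The main obstacle is this $P_{4}$-confinement lemma. Its endpoints must lie in $I$ (else a non-edge within the clique $K$) and its middle vertices in $K$ (else an edge within the independent set $I$); $P_{4}$'s meeting $C$ are ruled out because $C$ is complete to $K$ and nonadjacent to $I$, so any vertex of $C$ in a would-be $P_{4}$ creates a triangle or a $K_{1,3}$ rather than continuing the path. In the remaining case, write $u_{1}\in I_{a}$, $u_{2}\in K_{b}$, $u_{3}\in K_{c}$, $u_{4}\in I_{d}$; the required adjacencies and non-adjacencies, combined with the rule ``$K_{i}$ complete to $I_{j}$ iff $i<j$,'' translate to the chain $a\ge b\ge d\ge c\ge a$, forcing $a=b=c=d$.
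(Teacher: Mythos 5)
Your proposal is correct and follows essentially the same route as the paper: exhibit the (pseudo-)split partition, glue antimorphisms of the $S_i$'s and of $C$ into an antimorphism of $G$, and prove forcibility by showing $\mathcal{E}$ is closed under 2-switches, which (via $(2K_2,C_4)$-freeness) reduces to confining every induced $P_4$ to a single $S_i$ or to $C$. The only difference is in that confinement step, where you argue by direct case analysis (the index chain $a\ge b\ge d\ge c\ge a$ and the cases involving $C$, which you treat a bit more explicitly than the paper does), while the paper rules out $P_4$'s spanning distinct $S_i$'s via closed-neighborhood containment between vertices of distinct $K_i,K_j$ or distinct $I_i,I_j$.
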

\begin{proof}
  Let $G$ be any graph in $\mathcal{E}$. 
  It has a split partition $(\bigcup_{i=1}^{p} K_{i} \cup C)\uplus \bigcup_{i=1}^{p} I_{i}$ when $|C| \le 1$, and a pseudo-split partition $\bigcup_{i=1}^{p} K_{i} \uplus \bigcup_{i=1}^{p} I_{i}\uplus C$ otherwise.
  To show that it is self-complementary, we construct an antimorphism $\sigma$ for it.  For each $i = 1, \ldots, p$, we take an antimorphism $\sigma_{i}$ of $S_{i}$, and set $\sigma(x) = \sigma_{i}(x)$ for all $x\in V(S_{i})$.
  If $C$ consists of a single vertex $v$, we set $\sigma(v) = v$.
  If $|C| = 5$, we take an antimorphism $\sigma_{p+1}$ of $C_{5}$ and set $\sigma(x) = \sigma_{p+1}(x)$ for all $x\in C$.
  It is easy to verify that a pair of vertices $u, v$ are adjacent in $G$ if and only if $\sigma(u)$ and $\sigma(v)$ are adjacent in $\overline G$.

  For the second assertion, we show that applying a 2-switch to a graph $G$ in $\mathcal{E}$ leads to another graph in $\mathcal{E}$.
  Since $G$ is a split graph, a 2-switch can only be applied to a $P_4$.
  For two vertices $v_{1}\in K_{i}$ and $v_{2}\in K_{j}$ with $i < j$, we have $N[v_{2}]\subseteq N[v_{1}]$.  Thus, there cannot be any $P_{4}$ involving both $v_{1}$ and $v_{2}$.
  A similar argument applies to two vertices in $I_{i}$ and $I_{j}$ with $i\ne j$.
  Therefore, a 2-switch can be applied either \textit{inside} $C$ or \textit{inside} $S_{i}$ for some $i \in \{1, \ldots, p\}$.
  By Proposition~\ref{lem:basic-graphs}, the resulting graph is in $\mathcal{E}$.
\end{proof}

We refer to graphs in $\mathcal{E}$ as \emph{elementary self-complementary pseudo-split graphs}.
The rest of this section is devoted to showing that all realizations of forcibly self-complementary degree sequences are elementary self-complementary pseudo-split graphs. We start with a simple observation on potentially self-complementary degree sequences with two different degrees. It can be derived from Clapham and Kleitman~\cite{clapham-76-self-complementary-degree-sequences}. We provide a direct and simple proof here.

\begin{proposition}
  \label{lem:homogeneous-construction}
  There is a self-complementary graph of the degree sequence $(d^{2k}, (4k - 1 - d)^{2k})$ if and only if $2k \leq d \leq 3k - 1$.
  Moreover, there exists a self-complementary graph with a one-cycle antimorphism.
\end{proposition}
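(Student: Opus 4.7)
The plan is to handle the two directions separately, exploiting a degree-sum argument for necessity and an explicit cyclic construction for sufficiency that simultaneously yields the single-cycle antimorphism.

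For the \emph{only if} direction, I would pick any antimorphism $\sigma$ of a realization $G$. Since $d(v)+d(\sigma(v))=4k-1$ and $d>4k-1-d$, the set $H$ of $2k$ vertices of degree $d$ is mapped bijectively by $\sigma$ onto the set $L$ of $2k$ vertices of degree $4k-1-d$. Writing $h=|E(G[H])|$, $\ell=|E(G[L])|$, and $m$ for the number of edges between $H$ and $L$, the fact that $\sigma$ sends edges of $G[H]$ to non-edges of $G[L]$ yields $h+\ell=\binom{2k}{2}=k(2k-1)$. Combining this with the two degree-sum identities $2h+m=2kd$ and $2\ell+m=2k(4k-1-d)$, I would solve the linear system to get $m=2k^2$, $h=k(d-k)$, and $\ell=k(3k-1-d)$. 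Non-negativity of $\ell$ then forces $d\le 3k-1$, while $d\ge 2k$ is built into the ordering of the sequence.

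For the \emph{if} direction, together with the moreover statement, I would construct a realization with a single-cycle antimorphism. Label the vertices $v_0,v_1,\dots,v_{4k-1}$ and let $\sigma$ be the $4k$-cycle $(v_0\,v_1\,\cdots\,v_{4k-1})$. The antimorphism condition is equivalent to choosing, for each $\sigma$-orbit on unordered pairs, one of two alternating edge/non-edge assignments. A direct check shows that the orbit of $\{v_0,v_\delta\}$ has length $4k$ for $\delta\in\{1,\dots,2k-1\}$ and length $2k$ for $\delta=2k$; in both cases the length is even, so the alternating assignment is consistent. The crux is the count of $d(v_0)$ as a function of the choices. For each $\delta\in\{1,\ldots,2k-1\}$ the two pairs $\{v_0,v_\delta\}$ and $\{v_0,v_{4k-\delta}\}$ lie in the same orbit, at positions $0$ and $4k-\delta$; if $\delta$ is odd these positions have opposite parity, forcing exactly one of the two to be an edge, while if $\delta$ is even they have the same parity, giving $0$ or $2$ depending on the choice. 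For $\delta=2k$ the sole pair $\{v_0,v_{2k}\}$ contributes $0$ or $1$. Hence $d(v_0)=k+2a+b$ with $a\in\{0,\dots,k-1\}$ and $b\in\{0,1\}$ freely chosen, which ranges over every integer in $[k,3k-1]$ and in particular over every value $d\in[2k,3k-1]$. Since the even-indexed vertices form a single $\sigma^2$-orbit they share the same degree $d$, and the odd-indexed ones have degree $4k-1-d$, so the required degree sequence is realized with $\sigma$ a one-cycle antimorphism.

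I expect the main difficulty to be the careful bookkeeping at the boundary orbit $\delta=2k$: its length is $2k$ rather than $4k$, it contains only one pair involving $v_0$, and its contribution to $d(v_0)$ is $0$ or $1$ rather than $0$ or $2$. These asymmetries are precisely what allows every integer $d\in[k,3k-1]$ to be hit (as opposed to only the values of a single parity). Aside from this, the argument reduces to the complementary-degree identity $d(v)+d(\sigma(v))=4k-1$ combined with a transparent circulant construction.
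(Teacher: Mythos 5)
Your proposal is correct, and while it follows the same skeleton as the paper's proof (an edge-counting argument for necessity, and a construction built on a single $4k$-cycle antimorphism, in the style of Gibbs, for sufficiency), both halves are executed differently enough to be worth comparing. For necessity, the paper merely compares a lower bound $2k(d-2k+1)$ with an upper bound $2k(4k-1-d)$ on the number of edges between $H$ and $L$ --- an inequality that needs no antimorphism and hence applies to every realization of the sequence --- whereas you use the antimorphism to solve for the exact quantities $h=k(d-k)$, $\ell=k(3k-1-d)$, $m=2k^2$ and read $d\le 3k-1$ off from $\ell\ge 0$; your computation is sharper (it pins down the edge distribution) but invokes self-complementarity, which here is all the statement requires. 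For sufficiency, the paper writes down an explicit neighborhood of $v_1$, with a case split on the parity of $d-k$, and asserts the resulting degrees; you instead parameterize all graphs admitting the $4k$-cycle as an antimorphism by the two alternating choices on each pair-orbit, and your parity bookkeeping at positions $0$ and $4k-\delta$ (together with the exceptional orbit $\delta=2k$ of length $2k$) correctly shows that $d(v_0)=k+2a+b$ sweeps every integer in $[k,3k-1]$, after which the $\sigma^2$-orbit argument gives the claimed degree sequence. Your route is cleaner and even characterizes exactly which degrees are attainable with a one-cycle antimorphism, but it is existential rather than explicit; note that the paper reuses the concrete adjacency list (e.g., $v_1$ complete to $\{v_2,v_6,\dots,v_{4k-2}\}$ and the precise membership of $X$) later in the proof of Proposition~\ref{prop:ds-not-forcibly}(ii)--(iii), so if your version replaced the paper's, those later arguments would have to fix a specific realization or be adapted accordingly.
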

\begin{proof}
  Necessity. By the definition of degree sequences, $d > 4k - 1 - d$. Therefore, $d \geq 2k$. Let $H$ be the set of vertices of degree $d$ and $L$ the set of vertices of degree $4k - 1 - d$. Each vertex in $H$ has at most $|H| - 1 = 2 k - 1$ neighbors in $H$.  Thus, the number of edges between $H$ and $L$ is at least
  $2 k (d - 2 k + 1) $.
  On the other hand, the number of edges between $H$ and $L$ is at most
  $2 k (4k - 1 - d)$.
  Thus, $4k - 1 - d \ge d - 2 k + 1$, and the claim follows.

  Sufficiency.
  We construct a self-complementary graph that has an antimorphism with exactly one cycle $(v_1 v_2 \cdots, v_{4k})$ by using the method of Gibbs~\cite{gibbs-74-self-complementary}. Note that the adjacencies between the first vertex and the other vertices decide the graph. We set the neighborhood of $v_{1}$ to be $\{v_{2}, v_{6}, \ldots, v_{4 k - 2}\}\cup X$, where
  \[
    X = \begin{cases}
          \{v_{3}, v_{5}, \ldots, v_{d - k}\} \cup \{v_{2k+1}\} \cup \{v_{4k-1}, v_{4k-3}, \ldots, v_{5k - d + 2}\}, & d \not\equiv k \pmod 2
          \\
          \{v_{3}, v_{5}, \ldots, v_{d - k + 1}\} \cup \{v_{4k-1}, v_{4k-3}, \ldots, v_{5k - d + 1}\}, & d \equiv k \pmod 2
        \end{cases}
  \]
  In the constructed graph, all odd-number vertices have degree $d$, and the others $4 k - d - 1$.
\end{proof}

The next proposition considers the parity of the number of vertices with a specific degree. It directly follows from Clapham and Kleitman~\cite{clapham-76-self-complementary-degree-sequences}, and Xu and Wong~\cite[Theorem 4.4]{xu-00-self-complementary-graphs}.

\begin{proposition}[\cite{clapham-76-self-complementary-degree-sequences, xu-00-self-complementary-graphs}]\label{prop:even-number}
  Let $G$ be a graph of order $4k$ and $v$ an arbitrary vertex of $G$. Let $H$ and $L$ be the $2k$ vertices of the largest and smallest degrees, respectively in $G$. If $G$ is self-complementary, then there exist an even number of vertices with degree $d_{G}(v)$ in $G$, an even number of vertices with degree $d_{G[H]}(v)$ in $G[H]$, and an even number of vertices with degree $d_{G[L]}(v)$ in $G[L]$.
\end{proposition}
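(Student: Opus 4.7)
The plan is to fix an antimorphism $\sigma$ of $G$ and apply it in two different ways: first to count vertex degrees in $G$ itself, then to count degrees within $G[H]$ and $G[L]$. Since $|V(G)| = 4k$ is even, $\sigma$ can have no fixed point, because a fixed vertex would need degree $(4k-1)/2$, which is not an integer; combined with Lemma~\ref{lem:length-of-circular-permutation}, every cycle of $\sigma$ therefore has length a positive multiple of $4$. The identity $d_G(v) + d_G(\sigma(v)) = 4k-1$ further shows that $\sigma$ sends each vertex of degree $d$ to a vertex of degree $4k-1-d$, so $\sigma(H) \subseteq L$ and $\sigma(L) \subseteq H$, with equality by cardinality.

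For the first assertion, consider any cycle $(v_1\,v_2\,\cdots\,v_{4m})$ of $\sigma$: its degrees alternate between two values $d$ and $4k-1-d$, so the cycle contains exactly $2m$ vertices of each. Summing the contributions of all cycles, the number of vertices of any prescribed degree in $G$ is a sum of even numbers, hence even.

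For the remaining two assertions, the key observation is that each $\sigma$-cycle of length $4m$ alternates between $H$ and $L$, placing $2m$ of its vertices in each side. The square $\sigma^2$ is an automorphism of $G$ that preserves $H$ and $L$ setwise, so it restricts to automorphisms of $G[H]$ and of $G[L]$. Within a single $\sigma$-cycle, the $2m$ vertices in $H$ form a single orbit of $\sigma^2|_H$, of length $2m \ge 2$, and analogously for $L$. Since vertices in one orbit of an automorphism share the same degree in the induced subgraph, the set $\{u \in H : d_{G[H]}(u) = d_{G[H]}(v)\}$ is a union of $\sigma^2|_H$-orbits, each of even size, and hence has even cardinality; the argument for $G[L]$ is identical.

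The main conceptual step, rather than a real technical obstacle, is to pass from $\sigma$ (which swaps $H$ and $L$) to $\sigma^2$ (which preserves each of them), and then to verify that Lemma~\ref{lem:length-of-circular-permutation} together with the absence of fixed points forces every orbit of $\sigma^2|_H$ and of $\sigma^2|_L$ to have even length. Once this orbit-length transfer is made explicit, all three parts of the proposition collapse to the same counting argument.
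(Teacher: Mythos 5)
Your proof is correct. Note, however, that the paper itself does not prove this proposition: it is stated as a citation, with the remark that it ``directly follows'' from Clapham--Kleitman and from Theorem~4.4 of Xu and Wong, so there is no in-paper argument to compare against. What you have written is a valid self-contained derivation built from exactly the ingredients the paper records in Section~\ref{sec:pre}: Lemma~\ref{lem:length-of-circular-permutation} (cycle lengths $1$ or multiples of $4$), the absence of a fixed point when $n=4k$ (your parity-of-degree argument is fine; one could equally note that one fixed point plus cycles of length divisible by $4$ cannot sum to $4k$), and Gibbs' observation that degrees alternate along a cycle, which yields both the first assertion and the fact that each $\sigma$-cycle alternates between $H$ and $L$. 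The only step worth spelling out slightly more is the identification $H=\{v: d(v)\ge 2k\}$ and $L=\{v: d(v)\le 2k-1\}$ (the bijection $v\mapsto\sigma(v)$ between these two sets forces each to have size $2k$, so they coincide with the top and bottom halves by degree); you gesture at this with ``equality by cardinality,'' and the paper asserts the same fact without proof in the introduction. The passage to $\sigma^{2}$, which is an automorphism of $G$ preserving $H$ and $L$ and whose orbits inside $H$ and $L$ are the even-sized halves (of size $2m$) of the $\sigma$-cycles, is precisely the standard mechanism behind the cited results, so your route is the expected one rather than a genuinely different approach.
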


In general, it is quite challenging to verify that a degree sequence is indeed forcibly self-complementary.
On the other hand, to show that a degree sequence is not forcibly self-complementary, it suffices to construct a realization that is not self-complementary.
We have seen that degree sequences $(0^{1})$, $(2^{5})$ and $(2^{2}, 1^{2})$, $(5^{4}, 2^{4})$ are forcibly self-complementary.
They are the only ones of these forms.

\begin{figure}[h]
  \centering
  \begin{tikzpicture}[scale=1.3]
    \def\k{2}
    \pgfmathsetmacro{\n}{4 * \k + 1}
    \coordinate (v0) at ({90}:1) {};
    \foreach \i in {0, 1,..., \n} {
      \pgfmathsetmacro{\angle}{90 - \i * (360 / \n)}
      \node[filled vertex] (v\i) at (\angle:1) {};
    }
    \foreach \i in {1,..., \n} {
      \foreach \j in {1,..., \k} {
        \pgfmathsetmacro{\e}{int(mod(\i + \j, \n))}
        \draw (v\i) -- (v\e);
      }
    }
    
  \end{tikzpicture}
  \caption{The graph $C_{9}^{2}$, with degree sequence $(4^{9})$, is not self-complementary.}
  \label{fig:reguar-not-sc}
\end{figure}
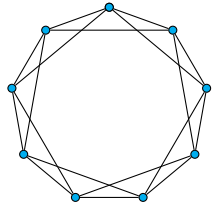

\begin{proposition}\label{prop:ds-not-forcibly}
  The following degree sequences are not forcibly self-complementary.
  \begin{enumerate}[i)]
  \item $((2k)^{4k+1})$, where $k \geq 2$.
  \item $(d^{2k},(n-1-d)^{2k})$, where $k \ge 2$ and $d \neq 5$.
  \item $(d^{2k_{1}}, (d-1)^{2k_{2}}, (n-d)^{2k_{2}}, (n-1-d)^{2k_{1}})$, where $k_{1}, k_{2} > 0$.
  \end{enumerate}  
\end{proposition}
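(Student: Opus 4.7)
The general strategy for all three parts is to exhibit an explicit realization of the given degree sequence that fails to be self-complementary.  For parts (ii) and (iii), the main tool is the Xu--Wong observation (both cases have order $\equiv 0 \pmod 4$) that the subgraphs induced by the top and bottom halves of the degree sequence---call them $G[H]$ and $G[L]$---are complementary; in particular $|E(G[H])|+|E(G[L])|=\binom{|H|}{2}$.  The main obstacle is the subcase $d=3k-1$ of part (ii), where the edge counts become rigid and one must instead expose a combinatorial asymmetry between the two sides of a bipartite graph.

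For part (i), I would take $G_k=C_{4k+1}^k$, the circulant on $\mathbb{Z}_{4k+1}$ with $i\sim j$ iff their cyclic distance lies in $\{1,\dots,k\}$; this graph is $2k$-regular.  A direct enumeration inside the neighborhood $\{\pm1,\dots,\pm k\}$ of a fixed vertex gives three contributions of size $\binom{k}{2}$ (pairs of positives, pairs of negatives, and mixed pairs $(a,-c)$ with $a+c\le k$), so each vertex lies in $3\binom{k}{2}$ triangles.  The complement $\overline{G_k}$ is a circulant with connection distances in $\{k+1,\dots,2k\}$; an analogous count leaves only $\binom{k}{2}$ triangles per vertex.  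For $k\ge 2$ these triangle counts differ by a factor of three, so $G_k\not\cong\overline{G_k}$.

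For part (ii), Proposition~\ref{lem:homogeneous-construction} forces $2k\le d\le 3k-1$.  When $d<3k-1$, I would take a realization with $G[H]=K_{2k}$ and a $(d-2k+1)$-regular $H$--$L$ bipartite part; then $G[L]$ carries $2k(3k-1-d)>0$ edges, so $|E(G[H])|+|E(G[L])|>\binom{2k}{2}$, contradicting self-complementarity.  When $d=3k-1$ the edge counts are forced ($G[H]=K_{2k}$ and $G[L]$ is edgeless) and the realization is parametrised by a $k$-regular bipartite graph $B$ between $H$ and $L$.  The resulting split graph is self-complementary iff $B$ admits a side-swapping isomorphism onto its bipartite complement, which in particular requires the codegree multisets on the two sides of $B$ to agree.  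For $k\ge 3$, I would construct $B$ with a twin pair $h_1,h_2$ sharing the common neighborhood $\{l_1,\dots,l_k\}$, and assign to $l_{k+1},\dots,l_{2k}$ pairwise distinct $k$-subsets of $\{h_3,\dots,h_{2k}\}$ as their neighborhoods.  Then the $H$-side exhibits codegree $k$ (from the twins) while the $L$-side has codegrees at most $k-1$, ruling out any side-swap isomorphism.  For $k=2$ the same recipe collapses to $2K_{2,2}$, which is self-complementary, consistent with the excluded sequence $(5^4,2^4)$.

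For part (iii), set $n=4(k_1+k_2)$.  Summing degrees on each side of the partition $H\sqcup L$ and combining with the Xu--Wong identity $|E(G[H])|+|E(G[L])|=\binom{n/2}{2}$ yields $|E(H,L)|=n^2/8=2(k_1+k_2)^2$ for every self-complementary realization.  I would build a realization with $G[H]=K_{n/2}$: then every vertex in $H$ contributes $d_G(v)-(n/2-1)$ to $|E(H,L)|$, producing $|E(H,L)|=2(k_1+k_2)d-4(k_1+k_2)^2+2k_1$.  Setting this equal to $2(k_1+k_2)^2$ rearranges to $d=3(k_1+k_2)-k_1/(k_1+k_2)$, which is never an integer because $0<k_1/(k_1+k_2)<1$.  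Hence this realization exists and is not self-complementary.
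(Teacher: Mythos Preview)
Part~(i) coincides with the paper's argument: both use the circulant $C_{4k+1}^k$ and compare the number of edges inside $N(v)$ with the number of missing edges outside $N[v]$. Your sub-case $d<3k-1$ of~(ii) is actually cleaner than the paper's (which performs a $2$-switch on the one-cycle realisation from Proposition~\ref{lem:homogeneous-construction} and then invokes the parity criterion of Proposition~\ref{prop:even-number}): by taking $G[H]=K_{2k}$, a $(d-2k+1)$-regular bipartite $H$--$L$ part, and a $(6k-2-2d)$-regular $G[L]$, you violate $|E(G[H])|+|E(G[L])|=\binom{2k}{2}$ directly. The pieces exist because $6k-2-2d\in\{2,\ldots,2k-2\}$ and $d-2k+1\in\{1,\ldots,k-1\}$.

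There are, however, two genuine gaps. In the sub-case $d=3k-1$ of~(ii) your bipartite graph $B$ is underspecified: prescribing only that $l_{k+1},\ldots,l_{2k}$ receive pairwise distinct $k$-subsets of $\{h_3,\ldots,h_{2k}\}$ neither forces each $h_i$ with $i\ge 3$ to have degree~$k$, nor prevents twins among $l_1,\ldots,l_k$. Both are needed before you can assert that the $L$-side has maximal codegree at most $k-1$. The paper uses the same twin idea but with an explicit construction: it starts from $Z_k$, applies $k$ concrete $2$-switches, and then checks pair by pair that no two low-side vertices share a neighbourhood.

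More seriously, in~(iii) the clause ``Hence this realization exists'' is unsupported. Your arithmetic is correct and shows that \emph{if} a realisation with $G[H]=K_{n/2}$ exists then its cross count $|E(H,L)|$ differs from $n^2/8$; but nothing preceding that line establishes existence. You would still need to exhibit bipartite degrees $(b_v)_{v\in L}$ and residual degrees $(c_v)_{v\in L}$ with $b_v+c_v=d_G(v)$ such that the bipartite sequence $((d_v-\tfrac{n}{2}+1)_{v\in H},(b_v)_{v\in L})$ is bigraphical and $(c_v)_{v\in L}$ is graphical, and this does not follow for free from $\tau$ being potentially self-complementary. The paper avoids this existence issue entirely by a different route: it first passes (via the Clapham--Kleitman inequality) to the two-value sequence $(d^{2(k_1+k_2)},(n-1-d)^{2(k_1+k_2)})$, builds the one-cycle self-complementary realisation of that sequence from Proposition~\ref{lem:homogeneous-construction}, performs $k_2$ explicit edge replacements to obtain a realisation of $\tau$, and then applies one further $2$-switch together with a cross-neighbour count between $H$ and $L$ to certify non-self-complementarity.
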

\begin{proof}
  The statement holds vacuously if the degree sequence is not potentially self-complementary.  Henceforth, we assume that they are.
  
  (i)
    We start from a cycle graph on $4k+1$ vertices, and add an edge between every pair of vertices with distance at most $k$ on this cycle.
    The resulting graph is denoted as $C_{4k+1}^{k}$.
    As an example, the graph for $k = 2$ is in Figure~\ref{fig:reguar-not-sc}.
    To see that the graph $C_{4k+1}^{k}$ is not self-complementary, note that for any vertex $v$, there are $3k(k-1)/2$ edges among $N(v)$ and $k(k-1)/2$ missing edges among $V(G)\setminus N[v]$.
  
    (ii) By Proposition~\ref{lem:homogeneous-construction}, we have that $2 k \leq d \leq 3 k - 1$.
    The graph in Figure~\ref{fig:an-example-not-forcibly} has degree sequence $(4^4,3^4)$ and is not self-complementary.  In the rest, $k \geq 3$.

    \begin{figure}[b]
      \centering
      \begin{tikzpicture}
          \draw (0,2) -- (0,1) -- (0,0) -- (1,0) -- (2,0) -- (2,1) -- (2,2) -- (1,2);
          \draw (2,1) -- (1,2) -- (0,1) -- (1,0);
          \draw (0,2) -- (2,0);
          \draw (1,0) -- (1,2);
          \draw (0,2) -- (2,1);
          \draw (0,0) -- (2,2);
          \foreach \x in {0,2} {
            \foreach \y in {0,2} {
              \node[empty vertex] () at (\x,\y) {};
            }
          }
          \foreach \list[count=\y from 0] in {{1},{0,2},{1}}
          \foreach \x in \list {
            \node[filled vertex] () at (\x,\y) {};
          }
        \end{tikzpicture}
      \caption{A graph, with degree sequences $(4^4,3^4)$, is not self-complementary.}
      \label{fig:an-example-not-forcibly}
    \end{figure}
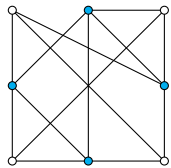
    
    Case 1: $d = 3 k - 1$. Starting with a $P_{4}$, we substitute each degree-one vertex with an independent set of $k$ vertices, and each degree-two vertex with a clique of $k$ vertices.
    The degree sequence is $((3k-1)^{2k},(k)^{2k})$.
    We label the vertices of degree $3k-1$ as $u_1, \ldots, u_{2k}$ and vertices of degree $k$ as $v_1, \ldots, v_{2k}$.
    For $i = 1, \ldots, k$, we conduct $(u_k v_{i}, u_{k+i} v_{k+i})\rightarrow (u_{k} v_{k+i}, u_{k+i} v_i)$.
    See Figure~\ref{fig:12-vertices} for the example of $k = 3$.  We show that the resulting graph is not self-complementary.  Note that the $k-1$ vertices $u_1, \ldots, u_{k-1}$ are twins (having the same neighborhood). It suffices to argue that there are no twins in $v_1, \ldots, v_{2k}$. Since $N(u_k) = \{v_{k+1}, \ldots, v_{2k}\}$, we separate them into $v_1, \ldots, v_{k}$ and $v_{k+1}, \ldots, v_{2k}$. For $1 \le i < j \le k$, vertices $v_i$ and $v_j$ are not twins because $u_{k+i}$ is adjacent to $v_i$ but not $v_j$. For $k+1 \le i < j \le 2k$, vertices $v_i$ and $v_j$ are not twins because $u_i$ is adjacent to $v_j$ but not $v_i$.
  
    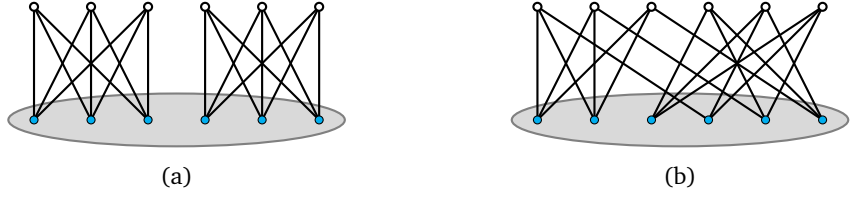
\begin{figure}[t]
    \centering
    \tikzset{every path/.style={thick}}
    \begin{subfigure}[b]{.4\textwidth}
      \centering
      \begin{tikzpicture}[xscale=.75]
        \draw[fill = gray!30, draw = gray] (3.5, 0) ellipse (2.95 and 0.35);
        \foreach \i in {1,..., 3}
            \foreach \j in {1,..., 3}
                \draw (\i, 1.5) -- (\j, 0);
        \foreach \i in {4,..., 6}
            \foreach \j in {4,..., 6}
                \draw (\i, 1.5) -- (\j, 0);
        \foreach \i in {1, ..., 6} {
          \node[filled vertex] (c\i) at (\i, 0) {};
          \node[empty vertex] (i\i) at (\i, 1.5) {};
        }
      \end{tikzpicture}
      \caption{}
      \label{fig:split-sc-12-vertices}
    \end{subfigure}
    \,
    \begin{subfigure}[b]{.4\textwidth}
      \centering
      \begin{tikzpicture}[xscale=.75]
        \draw[fill = gray!30, draw = gray] (3.5, 0) ellipse (2.95 and 0.35);
        \foreach \i in {1,..., 3}
        \foreach \j in {1,..., 2}
        \draw (\i, 1.5) -- (\j, 0);
        \foreach \i in {4,..., 6} {
          \draw (\i, 1.5) -- (3, 0);
          \foreach \j in {4,..., 6}
          \ifthenelse{\i=\j}{\draw (\i-3, 1.5) -- (\i, 0);}{\draw (\i, 1.5) -- (\j, 0);};        
          }
        \foreach \i in {1, ..., 6} {
          \node[filled vertex] (c\i) at (\i, 0) {};
          \node[empty vertex] (i\i) at (\i, 1.5) {};
        }
      \end{tikzpicture}
      \caption{}
      \label{}
    \end{subfigure}
    \caption{Two graphs with degree sequence $(8^{6}, 3^{6})$, where
      (a) is self-complementary but (b) not.}
    \label{fig:12-vertices}
  \end{figure}
    
  Case 2: $d < 3 k - 1$.  Using the method shown in Proposition~\ref{lem:homogeneous-construction}, we can construct a realization $G$ of $(d^{2k},(n-1-d)^{2k})$.
  Note that $G$ is self-complementary with an antimorphism $\sigma = (v_1 v_2 \cdots, v_{4k})$. Let $H = \{v_1, v_3, v_5, v_7, \dots, v_{4k-1}\}$. Note that the vertices in $H$ share the same degree $d$. 

  If $v_{1}$ is adjacent to $v_{2k+1}$, then it is not adjacent to $v_{2k-1}$; otherwise, from our construction, $v_{2k-1}$ must be $v_{d-k}$ and it implies that $d = 3k - 1$, a contradiction. The fact that $v_{1}$ is adjacent to $v_{2}$ implies that $v_{2k-1}$ is adjacent to $v_{2k}$ and  $v_{2k}$ is not adjacent to $v_{2k+1}$.  We conduct the 2-switch $(v_1v_{2k+1},v_{2k-1}v_{2k}) \rightarrow (v_1v_{2k-1},v_{2k}v_{2k+1})$, and denote by $G'$ the resulting graph.
  It can be observed that
  \[
    |N_{G'}(v) \cap H| = 
    \begin{cases}
      |N_{G}(v) \cap H| + 1 & \text{if } v=v_{2k-1}, \\
      |N_{G}(v) \cap H| - 1 & \text{if } v=v_{2k+1} \text{, and} \\
      |N_{G}(v) \cap H|     & \text{if } v \in H\setminus \{2k-1, 2k+1\}. \\
     \end{cases}
\]
  The graph $G'$ is not self-complementary by Proposition~\ref{prop:even-number}.

  We now consider the case that $v_{1}$ is not adjacent to $v_{2k+1}$. From our construction, we know that $d-k$ is even and $v_{1}$ is adjacent to $v_{d-k+1}$ and not adjacent to $v_{d-k+3}$. The fact that $v_{1}$ is adjacent to $v_{2}$ and not adjacent to $v_{4}$ implies $v_{d-k+3}$ is adjacent to  $v_{d-k+4}$ and $v_{d-k+1}$ is not adjacent to $v_{d-k+4}$. By conducting the 2-switch $(v_1v_{d-k+1},v_{d-k+3}v_{d-k+4}) \rightarrow (v_1v_{d-k+3},v_{d-k+1}v_{d-k+4})$, the resulting graph $G'$ have the same degree sequence as $G$. By using arguments similar to the previous paragraph, it can be shown that $G'$ is not self-complementary.
  
    (iii)  We use $\tau$ to denote the degree sequence $(d^{2k_{1}}, (d-1)^{2k_{2}}, (n-d)^{2k_{2}}, (n-1-d)^{2k_{1}})$. 
    Since $\tau$ is potentially self-complementary, the inequality 
    \[
        k_1d + k_2(d-1) \leq (k_1 + k_2) (n - 1 - (k_1 + k_2))
    \]
      should be satisfied by the theorem in~\cite{clapham-76-potentially-self-complementary-sequences}. Therefore,
  \[
      d \leq n - 1 - (k_1 + k_2) + \frac{k_2}{k_1 + k_2} < n - 1 - (k_1 + k_2).
  \]
  By using the same theorem, it can be seen that the integer sequence $(d^{2k_{1}+ 2k_{2}}, (n-1-d)^{2k_{1}+ 2k_{2}})$ is potentially self-complementary. 
  
  Let $k = k_1 + k_2$. 
  We can construct a realization $G$ of $(d^{2k}, (n-1-d)^{2k})$ by using the method shown in Proposition~\ref{lem:homogeneous-construction}. 
  Note that $G$ is self-complementary with an antimorphism $\sigma = (v_1 v_2 \cdots, v_{4k})$ and all odd-numbered vertices have degree $d$, and the others have degree $4k - d - 1$. The fact that $v_1$ is adjacent to $v_3$ implies $\sigma^{4i}(v_1)$ is adjacent to $\sigma^{4i}(v_3)$ for all $i = 1, 2, \dots, k - 1$. Furthermore, since $v_1$ is adjacent to $v_2$, the vertex $v_3$ is adjacent to $v_4$ and $v_4$ is not adjacent to $v_5$. Moreover, we can further deduce that $\{v_5\}$ is complete to $\{v_{2}, v_{6}, \ldots, v_{4 k - 2}\}$ since $\{v_1\}$ is complete to $\{v_{2}, v_{6}, \ldots, v_{4 k - 2}\}$.
  
  We claim that $v_{1}$ is adjacent to $v_{5}$ in $G$. Suppose $v_{1}$ is not adjacent to $v_{5}$. Then $v_1$ is only adjacent to $v_3$ and $v_{4k-1}$ in $\{v_3, v_5, v_7, \dots, v_{4k-1}\}$. Since $d > n - 1 - d$, we have that $n$ can only be eight and the degree sequence of $G$ is $(4^4,3^4)$. Note that $d > d-1 > n-2 > n - 1 - d$. The difference between $d$ and $n-1-d$ is at least three. We encounter a contradiction.
  
  We now remove the edge $\sigma^{4i}(v_1)\sigma^{4i}(v_3)$ and add edge $\sigma^{4i + 1}(v_1)\sigma^{4i + 1}(v_3)$ for all $i = 0, 1, 2, \dots, k_2-1$. The resulting graph $G'$ is a realization of the degree sequence $\tau$. In $G'$, the vertex $v_1$ is adjacent to $v_5$ and not adjacent to $v_3$. The vertex $v_4$ is adjacent to $v_3$ and not adjacent to $v_5$. By conducting the 2-switch $(v_1v_5,v_3v_4) \rightarrow (v_1v_3,v_4v_5)$, the resulting graph $G''$ have the same degree sequence as $G'$. 
  
  We show that $G''$ is not self-complementary. Let $H = \{v_1, v_3, v_5, v_7, \dots, v_{4k-1}\}$ and $L = \{v_2, v_4, v_6, \dots, v_{4k}\}$. Suppose $G''$ is a self-complementary graph. Then any antimorphism $\sigma'$ of $G''$ maps $H$ to $L$ and vice versa. Since $v_5$ is adjacent to $v_4$ and $\{v_5\}$ is complete to $\{v_{2}, v_{6}, \ldots, v_{4 k - 2}\}$, the vertex $v_5$ has $k + 1$ neighbors in $L$. Therefore, $\sigma'(v_5)$ is in $L$ and it has $k + 1$ non-neighbors in $H$. Every vertex in $L$ has $k$ neighbors in $H$ and $|H| = 2k$. No vertex in $L$ can have $k + 1$ non-neighbors in $H$. We encounter a contradiction.
\end{proof}

Let $G$ be a self-complementary graph with $\ell$ different degrees $d_{1}$, $\ldots$, $d_{\ell}$. For each $i = 1, \ldots, \ell$, let $V_{i}(G) = \{v\in V(G)\mid d(v) = d_{i}\}$, and we define the \emph{$i$th slice} of $G$ as the induced subgraph $S_{i}(G) = G[V_{i}\cup V_{\ell + 1 - i}]$. We may drop $(G)$ when the graph is clear from the context.
Note that $V_{i} = V_{\ell + 1 - i}$ and $S_{i} = G[V_{i}]$ when $\ell$ is odd and $i = (\ell + 1)/2$.
Each slice must be self-complementary, and more importantly, its degree sequence is forcibly self-complementary.

\begin{lemma} \label{lem:sub-graph-forcibly}
  Let $\tau$ be a forcibly self-complementary degree sequence, $G$ a realization of $\tau$, and $\sigma$ an antimorphism of $G$.
  The degree sequence of every slice of $G$ is forcibly self-complementary.
\end{lemma}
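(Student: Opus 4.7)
The plan is to reduce the forcible self-complementarity of the slice's degree sequence, call it $\tau_{i}$, back to that of the ambient sequence $\tau$ by lifting 2-switches from the slice to $G$. Fix an arbitrary realization $H$ of $\tau_{i}$ on the vertex set $V_{i}\cup V_{\ell+1-i}$; I want to prove $H$ is self-complementary. By Lemma~\ref{lem:2-switches} there is a sequence of 2-switches carrying $S_{i}(G)$ to $H$. Each such 2-switch involves four vertices of $V_{i}\cup V_{\ell+1-i}$, and because the adjacencies among these vertices are the same in $S_{i}(G)$ as in $G$, the identical 2-switch is legal on $G$ itself. Performing the whole sequence inside $G$ yields a graph $G'$ that agrees with $G$ outside $V_{i}\cup V_{\ell+1-i}$ and whose induced subgraph on $V_{i}\cup V_{\ell+1-i}$ is precisely $H$.

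Next I would use that 2-switches preserve all vertex degrees, so $G'$ is still a realization of $\tau$ and its degree classes satisfy $V_{j}(G')=V_{j}(G)$ for every $j$. Since $\tau$ is forcibly self-complementary by assumption, $G'$ admits some antimorphism $\sigma'$. Any antimorphism sends a vertex of degree $d$ to one of degree $n-1-d$, so $\sigma'$ maps $V_{i}$ to $V_{\ell+1-i}$ and vice versa, and therefore restricts to a permutation of $V_{i}\cup V_{\ell+1-i}$.

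It remains to check that this restriction is an antimorphism of $H=S_{i}(G')$. For any $u,v\in V_{i}\cup V_{\ell+1-i}$, adjacency in $H$ coincides with adjacency in $G'$, and the same holds for the pair $\sigma'(u),\sigma'(v)$, which also lies in $V_{i}\cup V_{\ell+1-i}$. Combining this with the antimorphism property of $\sigma'$ on $G'$ gives $uv\in E(H)\iff \sigma'(u)\sigma'(v)\notin E(H)$, so $H$ is self-complementary. Since $H$ was arbitrary, $\tau_{i}$ is forcibly self-complementary. (The antimorphism $\sigma$ of $G$ featured in the statement plays no direct role in this argument; it is already implicit in $\tau$ being forcibly self-complementary.)

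There is no serious obstacle here: the whole argument is a clean lifting/restriction, with the only subtlety being to verify that a 2-switch that is legal inside the slice remains legal after re-embedding in $G$, which is immediate because the relevant four vertices and all edges among them are shared between $G$ and $S_{i}(G)$. Keeping track of degree classes across the lift is automatic because 2-switches preserve every degree.
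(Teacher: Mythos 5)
Your proposal is correct and follows essentially the same route as the paper: transform the slice to an arbitrary realization of its degree sequence by 2-switches, lift those 2-switches to $G$ (legal because the slice is an induced subgraph and degrees, hence degree classes, are preserved), invoke forcible self-complementarity of $\tau$ to get an antimorphism of the resulting graph $G'$, and restrict it to $V_{i}\cup V_{\ell+1-i}$ to witness self-complementarity of the slice. The only cosmetic difference is that the paper first records that $S_{i}(G)$ itself is self-complementary via the given $\sigma$ and then says the new slice is "the $i$th slice of $G'$, hence self-complementary," whereas you spell out the restriction argument explicitly, which is fine.
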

\begin{proof}
  Let $\tau = (d_i^{n_i})_{i=1}^\ell$. 
  Since $d_{1} > d_{2} > \cdots > d_{\ell}$, the antimorphism $\sigma$ maps the vertices from $V_{i}$ to $V_{\ell + 1 - i}$, and vice versa.
  Therefore, $n_i = n_{\ell + 1 - i}$, and
  the cycles of $\sigma$ consisting of vertices from $V_{i}\cup V_{\ell + 1 - i}$ is an antimorphism of $S_{i}$.
  Therefore, $S_{i}$ is self-complementary.

  We now verify any graph $S$ with the same degree sequence as $S_{i}$ is self-complementary.
  By Lemma~\ref{lem:2-switches}, 
  we can transform $S_{i}$ to $S$ by a sequence of 2-switches applied on vertices in $V_{i}\cup V_{\ell + 1 - i}$.
  We can apply the same sequence of 2-switches to $G$, which lead to a graph $G'$ with degree sequence $\tau$.
  Note that $S$ is the $i$th slice of $G'$, hence self-complementary.
\end{proof}

The following result follows from Lemma~\ref{lem:sub-graph-forcibly}.
\begin{corollary}\label{lem:no-2-switch}
  Let $G$ be a graph with $\ell$ different degrees. If the degree sequence of $G$ is forcibly self-complementary, then there cannot be a 2-switch that changes the number of edges in $S_{i}$ or between $V_{i}$ and $V_{\ell + 1 - i}$ for every $i \in \{1, \dots, \ell\} \setminus \{\frac{\ell + 1}{2}\}$.
\end{corollary}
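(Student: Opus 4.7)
The plan is to argue by contradiction and reduce to Lemma~\ref{lem:sub-graph-forcibly}. Suppose some 2-switch transforms $G$ into $G'$ such that, for some index $i \in \{1, \ldots, \ell\} \setminus \{(\ell+1)/2\}$, either $|E(S_i(G'))| \ne |E(S_i(G))|$ or the number of edges between $V_i$ and $V_{\ell+1-i}$ differs in $G'$ from that in $G$. Since a 2-switch preserves every vertex's degree, $G'$ realizes the same degree sequence $\tau$ and is therefore self-complementary; moreover, the degree classes $V_j$ are the same vertex subsets in $G$ and in $G'$, so $S_i(G)$ and $S_i(G')$ are supported on the same vertex set of size $2n_i$.

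For the first alternative, Lemma~\ref{lem:sub-graph-forcibly} gives that $S_i(G')$ is itself self-complementary. Any self-complementary graph on $m$ vertices has exactly $\binom{m}{2}/2$ edges, so $|E(S_i(G))| = |E(S_i(G'))| = \binom{2n_i}{2}/2$, contradicting the assumption that the 2-switch altered the edge count in $S_i$. For the second alternative, I would appeal to a global antimorphism. In any self-complementary graph, an antimorphism $\sigma$ sends a vertex of degree $d_i$ to one of degree $n - 1 - d_i = d_{\ell+1-i}$, hence $\sigma(V_i) = V_{\ell+1-i}$. Restricting $\sigma$ to $V(S_i)$ yields an antimorphism of $S_i$ that swaps $V_i$ and $V_{\ell+1-i}$, inducing a bijection between the cross-edges of $S_i$ and the cross-non-edges; this pins the number of edges between $V_i$ and $V_{\ell+1-i}$ to $n_i^2/2$. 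Applying this to both $G$ and $G'$ yields identical counts, contradicting the second possibility.

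The main subtlety is the bipartite claim: an antimorphism of the slice $S_i$ alone might a priori preserve $V_i$ and $V_{\ell+1-i}$ setwise, so the edge/non-edge bijection cannot be extracted merely from the conclusion ``$S_i$ is self-complementary'' supplied by Lemma~\ref{lem:sub-graph-forcibly}. The resolution is to invoke a \emph{global} antimorphism of the ambient self-complementary graph, which is forced by the flipping of degrees across the board to swap the two degree classes, and only then to restrict it to the slice. Note that Proposition~\ref{prop:even-number} ensures $n_i$ is even, so the quantities $\binom{2n_i}{2}/2$ and $n_i^2/2$ are bona fide integers.
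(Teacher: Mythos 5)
Your proof is correct and follows essentially the same route as the paper: apply Lemma~\ref{lem:sub-graph-forcibly} to the post-switch graph $G'$ (a realization of the same forcibly self-complementary sequence) to pin down the edge count of the slice, and use an antimorphism that swaps $V_{i}$ and $V_{\ell+1-i}$ (obtained by restricting a global one) to pin down the cross-edge count. You are in fact a bit more explicit than the paper at the cross-edge step (the paper simply asserts that $S_{i}(G')$ is not self-complementary); the only nitpick is that Proposition~\ref{prop:even-number} is stated for graphs of order $4k$, but the evenness of $n_{i}$ already follows from $S_{i}$ being self-complementary on $2n_{i}$ vertices, and it is not needed for the argument anyway.
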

\begin{proof}
  Since $S_{i}(G)$ is a self-complementary graph, the number of edges is fixed.  Since there exists an antimorphism of $S_{i}(G)$ that maps $V_{i}(G)$ to $V_{\ell + 1 - i}(G)$, the number of edges between them is fixed. Suppose there exists a 2-switch that changes the number of edges in $S_{i}(G)$ or between $V_{i}(G)$ and $V_{\ell + 1 - i}(G)$ for some $i \in \{1, \dots, \ell\} \setminus \{\frac{\ell + 1}{2}\}$. Let $G'$ be the resulting graph. Consequently, $S_{i}(G')$ is not self-complementary. Since the 2-switch operation does not change the degree of any vertex, the degree sequence of $S_{i}(G')$ should be forcibly self-complementary by Lemma~\ref{lem:sub-graph-forcibly}. We encounter a contradiction.
\end{proof}

Two vertices in $V_i$ share the same degree \textit{in the $i$th slice}.

\begin{lemma}\label{lem:homogeneous}
  Let $G$ be a graph with $\ell$ different degrees. If the degree sequence of $G$ is forcibly self-complementary, then for each $i \in \{1, \ldots, \ell\}$, the vertices in $V_{i}$ share the same degree in $S_{i}$.
\end{lemma}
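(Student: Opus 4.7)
The plan is to argue by contradiction. Assume that some class $V_i$ contains two vertices with different degrees in $S_i$, and let $u,v\in V_i$ realise respectively the largest value $e := \max_{v'\in V_i} d_{S_i}(v')$ and the smallest value $e' := \min_{v'\in V_i} d_{S_i}(v')$, so $e>e'$.

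The main case $e - e' \ge 2$ will be settled by a 2-switch. Writing $X := V(G) \setminus (V_i \cup V_{\ell+1-i})$ and using $d_G(u) = d_G(v) = d_i$, the inequality $e > e'$ translates into $|N(u)\cap X| < |N(v)\cap X|$, so there exist $w \in (V_i \cup V_{\ell+1-i}) \cap (N(u)\setminus N(v))$ and $x \in X \cap (N(v)\setminus N(u))$; a quick count using $e-e' \ge 2$ shows they may be picked so that $u,v,w,x$ are all distinct. Apply the 2-switch $(uw,vx) \to (ux,vw)$ to obtain $G'$; this preserves the (forcibly self-complementary) degree sequence of $G$, while in $S_i$ the only affected degrees are $d_{S_i}(u)$ (drops by one, to $e-1$) and $d_{S_i}(v)$ (rises by one, to $e'+1$). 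By Lemma~\ref{lem:sub-graph-forcibly}, $S_i(G')$ is self-complementary; for non-middle $i$ its order is a multiple of four, and Proposition~\ref{prop:even-number} forces every degree class to be even-sized. Since $e'+1<e$, no vertex replaces $u$ in the degree-$e$ class, so its cardinality falls from the even value $m_e$ to the odd value $m_e-1$, giving the contradiction.

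The remaining case $e - e' = 1$ is dispatched without a 2-switch. The extremal choice forces every vertex of $V_i$ to have degree $e$ or $e-1$ in $S_i$, with even multiplicities $2k_1$ and $2k_2$; via the antimorphism of $S_i$ the class $V_{\ell+1-i}$ uses only the two degrees $|S_i|-1-e$ and $|S_i|-e$ with the same multiplicities. A short case check on how these four values interleave (which, modulo $4\mid|S_i|$, reduces to whether $e = |S_i|/2$ or not) shows that the degree sequence of $S_i$ matches either the four-degree pattern of Proposition~\ref{prop:ds-not-forcibly}(iii), when all four values are distinct, or, in the only collapse case, the two-degree pattern of Proposition~\ref{prop:ds-not-forcibly}(ii) with $d = |S_i|/2$; in both situations it fails to be forcibly self-complementary, contradicting Lemma~\ref{lem:sub-graph-forcibly}. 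The evenness of $|S_i|/2$ sidesteps the exceptional value $d = 5$ of Proposition~\ref{prop:ds-not-forcibly}(ii). The middle slice ($|S_i|$ odd) will be handled by the same recipe using the odd-order analogue of Proposition~\ref{prop:even-number}, under which a single degree class is permitted to be odd-sized. The main obstacle will be the bookkeeping in the $e-e'=1$ case: one must run through each possible interleaving of the four critical degrees $e,e-1,|S_i|-e,|S_i|-1-e$ and verify that it always matches one of the two forbidden forms in Proposition~\ref{prop:ds-not-forcibly}.
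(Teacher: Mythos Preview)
Your approach is correct and essentially mirrors the paper's: both dispose of the gap-$1$ case via Proposition~\ref{prop:ds-not-forcibly}(iii) and handle gap $\ge 2$ by a 2-switch followed by the parity count of Proposition~\ref{prop:even-number}; the paper simply inverts the order, invoking (iii) first to force a gap $\ge 2$ and then performing the switch. One remark on your middle-slice plan: the case $e-e'=1$ is in fact vacuous there, since a self-complementary graph of odd order $m$ cannot have degree set $\{e,e-1\}$ (closure under $x\mapsto m-1-x$ forces $2e\in\{m,m-1\}$ and $2e-1\in\{m,m-1\}$, which is impossible), so only the 2-switch step survives, and choosing $e$ maximal guarantees $e>(m-1)/2$, keeping the parity argument intact.
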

\begin{proof}
  Suppose for contradiction that vertices in $V_{i}$ have different degrees in $S_{i}(G)$.
  By Lemma~\ref{lem:sub-graph-forcibly}, the degree sequence of $S_{i}(G)$ is forcibly self-complementary.
  It cannot be of the form $(d^{2k_{1}}, (d-1)^{2k_{2}}, (n-d)^{2k_{2}}, (n-1-d)^{2k_{1}})$ by Proposition~\ref{prop:ds-not-forcibly}(iii).
  Thus, there must be two vertices $v_{1}$ and $v_{2}$ in $V_{i}$ such that
 \[
   d = d_{S_{i}(G)}(v_{1}) > d_{S_{i}(G)}(v_{2}) + 1.
 \]
 There exists a vertex
 \[
   x_{1}\in V(S_{i}(G))\cap N(v_{1})\setminus N(v_{2}). %
 \]
 On the other hand, since $d_{G}(v_{1}) = d_{G}(v_{2})$, there must be a vertex
 \[
   x_{2} \in N(v_{2})\setminus \left( N(v_{1})\cup V(S_{i}(G)) \right).
 \]
 We apply the 2-switch $(x_{1} v_{1}, x_{2} v_{2})\rightarrow (x_{1} v_{2}, x_{2} v_{1})$ to $G$ and denote by $G'$ the resulting graph.
 By assumption, $G'$ is also self-complementary.
 By Lemma~\ref{lem:sub-graph-forcibly}, $S_{i}(G)$ is self-complementary, and hence there are an even number of vertices with degree $d$ by Proposition~\ref{prop:even-number}.
 The degree of a vertex $x$ in $S_{i}(G')$ is
 \[
   \begin{cases}
     d_{S_{i}(G)}(x) - 1 & x = v_{1},
     \\
     d_{S_{i}(G)}(x) + 1 & x = v_{2},
     \\
     d_{S_{i}(G)}(x) & \text{otherwise}.
   \end{cases}
 \]
 Thus, the number of vertices with degree $d$ in $S_{i}(G')$ is odd.  Hence, $S_{i}(G')$ is not self-complementary by Proposition~\ref{prop:even-number}, which contradicts Lemma~\ref{lem:sub-graph-forcibly}.
\end{proof}

We next show all possible configurations for the slices of $G$. 
\begin{lemma}
\label{lem:focibly-ds-cycle-scs}
  Let $G$ be a graph with $\ell$ different degrees. If the degree sequence of $G$ is forcibly self-complementary, then  
  \begin{enumerate}[i)]
    \item $S_{i}$ is either a $P_{4}$ or one of the graphs in Figure~\ref{fig:share-degree-sequence}(a, b) for every $i \in \{1, \dots, \ell\} \setminus \{\frac{\ell + 1}{2}\}$, and 
    \item $S_{(\ell+1)/2}$ is either a $C_{5}$ or contains exactly one vertex if $\ell$ is odd.
  \end{enumerate}
\end{lemma}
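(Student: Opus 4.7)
The plan is to analyze each slice $S_i(G)$ separately. Lemma~\ref{lem:sub-graph-forcibly} guarantees that each such slice has a forcibly self-complementary degree sequence, so it suffices to combine this with the homogeneity statement of Lemma~\ref{lem:homogeneous} and the non-existence catalogue in Proposition~\ref{prop:ds-not-forcibly} to squeeze the possibilities down to those listed.

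For part (i), I would fix $i \neq (\ell+1)/2$. Any antimorphism $\sigma$ of $G$ restricts to one of $S_i$ and maps $V_i$ bijectively to $V_{\ell+1-i}$, so $|V_i| = |V_{\ell+1-i}|$; this common cardinality is even, because each non-trivial cycle of $\sigma$ has length divisible by $4$ and contributes equally to both classes, so I can write $|V_i| = 2k$. Lemma~\ref{lem:homogeneous} then pins down the degree sequence of $S_i$ to the two-valued form $(d^{2k}, (4k-1-d)^{2k})$, where $d$ is the common degree of $V_i$ inside $S_i$. Proposition~\ref{prop:ds-not-forcibly}(ii) eliminates every such forcibly self-complementary sequence except those with $k = 1$ or $d = 5$. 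In the first case $S_i$ is a self-complementary graph on four vertices, hence $S_i \cong P_4$. In the second the bounds $2k \leq d \leq 3k - 1$ from Proposition~\ref{lem:homogeneous-construction} force $k = 2$ and the sequence $(5^4, 2^4)$.

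For part (ii), I would set $i = (\ell+1)/2$ with $\ell$ odd, so $V_i = V_{\ell+1-i}$ and $S_i = G[V_i]$ is self-complementary. Lemma~\ref{lem:homogeneous} forces $S_i$ to be regular; a regular self-complementary graph on $m$ vertices necessarily has common degree $(m-1)/2$, so $m$ is odd, and since every self-complementary graph has order $4k$ or $4k+1$ we conclude $m = 4k+1$ for some $k \geq 0$. The case $k = 0$ gives the single vertex. Otherwise Proposition~\ref{prop:ds-not-forcibly}(i) eliminates every $k \geq 2$, leaving only $k = 1$ with sequence $(2^5)$, whose unique realisation on five vertices is $C_5$.

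The main remaining obstacle is the $d = 5$ subcase of part (i): one still has to check that Figure~\ref{fig:share-degree-sequence}(a, b) exhaust the realisations of $(5^4, 2^4)$ up to isomorphism. I would resolve this by a short edge-count; writing $e_1$, $e_2$, $e_{12}$ for the numbers of edges inside the degree-$5$ class, inside the degree-$2$ class, and between them, the identities $2 e_1 + e_{12} = 20$ and $2 e_2 + e_{12} = 8$ together with $e_1 \leq \binom{4}{2} = 6$ force $e_1 = 6$, $e_2 = 0$ and $e_{12} = 8$. Thus the degree-$5$ vertices induce $K_4$, the degree-$2$ vertices form an independent set, and the crossing edges form a $2$-regular bipartite graph on $4 + 4$ vertices, which must be either $C_8$ or $2 C_4$---matching exactly the two graphs cited.
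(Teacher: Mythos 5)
Your proof is correct and takes essentially the same route as the paper: Lemma~\ref{lem:sub-graph-forcibly} and Lemma~\ref{lem:homogeneous} reduce each slice to a regular or two-valued self-complementary degree sequence, and Propositions~\ref{prop:ds-not-forcibly} and~\ref{lem:homogeneous-construction} leave only $(0^1)$, $(2^5)$, $(2^2,1^2)$, and $(5^4,2^4)$. Your concluding edge count showing that $(5^4,2^4)$ has exactly the two realizations of Figure~\ref{fig:share-degree-sequence}(a, b) is a correct filling-in of a detail the paper leaves implicit (it is essentially absorbed into Proposition~\ref{lem:basic-graphs}).
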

\begin{proof}
    For all $i = 1, \ldots, \ell$, the induced subgraph $S_{i}$ of $G$ is self-complementary by Lemma~\ref{lem:sub-graph-forcibly}. Furthermore, $S_{i}$ is either a regular graph or has two different degrees (Lemma~\ref{lem:homogeneous}).
    By considering the number of edges in $S_{i}$, we can deduce that $S_{i}$ is a regular graph if and only if its order is odd. From the proof of Lemma~\ref{lem:sub-graph-forcibly}, we know that the order of $S_{i}$ is odd if and only if $\ell$ is odd and $i = (\ell+1)/2$. 
    
    If $\ell$ is odd, then $S_{(\ell+1)/2}$ is a regular graph. Let $|V(S_{i})| = 4k+1$ for some integer $k \geq 0$. It can be derived that the degree sequence of $S_{(\ell+1)/2}$ is $((2k)^{4k+1})$. By Lemma~\ref{lem:sub-graph-forcibly} and Proposition~\ref{prop:ds-not-forcibly}(i), we can obtain that $k \leq 1$ and the degree sequence of $S_{(\ell+1)/2}$ can either be $(2^5)$ or $(0^1)$. Therefore, $S_{(\ell+1)/2}$ is either a $C_{5}$ or contains exactly one vertex.

    For every $i \in \{1, \dots, \ell\} \setminus \{\frac{\ell + 1}{2}\}$, we may assume that $S_{i}$ has $4k$ vertices for some positive integer $k$ and the degree sequence of $S_{i}$ is $(d^{2k},(4k-1-d)^{2k})$ for some positive integer $d$. By Lemma~\ref{lem:sub-graph-forcibly} and Proposition~\ref{prop:ds-not-forcibly}(ii), we can obtain that $k = 1$ or $d = 5$. Since $S_{i}$ is a self-complementary graph, the degree sequence of $S_{i}$ can either be $(5^4,2^4)$ or $(2^2,1^2)$ by Proposition~\ref{lem:homogeneous-construction}. Consequently, $S_{i}$ is either a $P_{4}$ or one of the graphs in Figure~\ref{fig:share-degree-sequence}(a, b).
\end{proof}

By Lemma~\ref{lem:focibly-ds-cycle-scs}, the induced subgraph $S_i$ is a self-complementary split graph for every $i \in \{1,2,\dots, \lfloor \ell/2 \rfloor\}$, 
We use $K_{i} \uplus I_{i}$ to denote the unique split partition of $S_i$ (Lemma~\ref{lem:unique-split-partition}).
Moreover, no vertex in $I_{i}$ is adjacent to all the vertices in $K_{i}$.

\begin{lemma}\label{lem:homogeneous2}
  Let $G$ be a graph with $\ell$ different degrees. If the degree sequence of $G$ is forcibly self-complementary, then for every $i \in \{1,2,\dots, \lfloor \ell/2 \rfloor\}$, 
  \begin{enumerate}[i)]
      \item $V_{i}$ is a clique, and $V_{\ell + 1 - i}$ an independent set; and
      \item if a vertex in $V(G)\setminus V_{i}$ has a neighbor in $V_{\ell + 1 - i}$, then it is adjacent to all the vertices in $V_{i}\cup V_{\ell + 1 - i}$.
  \end{enumerate}
\end{lemma}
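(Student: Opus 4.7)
Assertion~(i) follows immediately from the structural results already proved: by Lemma~\ref{lem:focibly-ds-cycle-scs} the slice $S_i = G[V_i \cup V_{\ell+1-i}]$ is a self-complementary split graph (either $P_4$ or one of the graphs in Figure~\ref{fig:share-degree-sequence}(a,b)), and Lemma~\ref{lem:unique-split-partition} forces its split partition to separate the higher-degree vertices from the lower-degree ones.  Since $d_i > d_{\ell+1-i}$ in $G$, this identifies $V_i$ with the clique side and $V_{\ell+1-i}$ with the independent side of $S_i$, giving (i).  The same argument applies to every realization of a forcibly self-complementary degree sequence, which will be used when invoking (i) on other graphs below.

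For (ii), the plan is a proof by contradiction via 2-switches.  Suppose $w \in V(G) \setminus V_i$ has a neighbor $v \in V_{\ell+1-i}$ and a non-neighbor $x \in V_i \cup V_{\ell+1-i}$; part~(i) rules out $w \in V_{\ell+1-i}$, so $w$ lies in some slice $V_j$ with $j \notin \{i, \ell+1-i\}$.  The uniform strategy is to exhibit a 2-switch producing a graph $G'$ with the same (forcibly self-complementary) degree sequence on which either~(i) or Lemma~\ref{lem:homogeneous} fails --- by creating an edge inside the independent set $V_{\ell+1-i}(G')$, destroying one inside the clique $V_i(G')$, or producing two vertices of a common slice whose degrees in $S_i(G')$ differ.

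The cleanest case is $x \in V_{\ell+1-i}$ with $N(v) \cap V_i \ne N(x) \cap V_i$: Lemma~\ref{lem:homogeneous} gives $|N(v) \cap V_i| = |N(x) \cap V_i|$, so some $u \in V_i$ satisfies $ux \in E$ and $uv \notin E$, and the 2-switch $(wv, xu) \to (wx, vu)$ shifts the $S_i$-degrees of $v$ and $x$ by $+1$ and $-1$, contradicting Lemma~\ref{lem:homogeneous} for $G'$.  The twin case $N(v) \cap V_i = N(x) \cap V_i$ can arise only when $S_i$ has the $(5^4,2^4)$ structure --- the two independent-set vertices of $P_4$ always have distinct $V_i$-neighborhoods --- and a direct counting argument (each clique-side vertex of an $(5^4,2^4)$ self-complementary split graph has exactly two neighbors on the independent side) then forces the existence of some $z \in V_{\ell+1-i}$ whose $V_i$-neighborhood differs from the common value.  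Applying the preceding argument to the pair $(v, z)$ or $(z, x)$, depending on whether $wz \in E$, again yields a contradiction.

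The remaining case $x \in V_i$ reduces to one of the two above.  If $w$ has a non-neighbor $v'' \in V_{\ell+1-i}$, then the pair $(v, v'')$ falls into the preceding paragraph; otherwise $w$ is adjacent to all of $V_{\ell+1-i}$, and one picks any $y \in N(x) \cap V_{\ell+1-i}$ (nonempty by direct inspection of $S_i$) together with any $v' \in V_{\ell+1-i} \setminus \{y\}$ (which exists since $|V_{\ell+1-i}| \ge 2$ in both admissible types of $S_i$), so that the 2-switch $(wv', xy) \to (wx, v'y)$ adds the edge $v'y$ inside $V_{\ell+1-i}(G')$, violating~(i) for $G'$.  I expect the main technical subtlety to be the twin subcase within $x \in V_{\ell+1-i}$, where one must locate a witness $z$ of a different neighborhood type rather than work directly with $v$ and $x$; once that is isolated, the rest of the case analysis is routine verification that each proposed 2-switch satisfies its non-adjacency conditions.
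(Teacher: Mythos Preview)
Your argument for~(i) has a genuine gap.  Lemma~\ref{lem:unique-split-partition} separates the clique side from the independent side of $S_i$ according to degrees \emph{in $S_i$}, not degrees in $G$.  From Lemma~\ref{lem:homogeneous} you do know that all of $V_i$ lies on one side of the split partition of $S_i$, but the sentence ``Since $d_i>d_{\ell+1-i}$ in $G$, this identifies $V_i$ with the clique side'' is a non sequitur: a priori a vertex of $V_i$ could have small $S_i$-degree and make up the difference with many neighbours outside $S_i$, while a vertex of $V_{\ell+1-i}$ does the opposite.  Ruling this out is exactly the content of~(i), and the paper does it with a $2$-switch: assuming some $v_1\in V_i\cap I_i$ (hence $V_i=I_i$ by Lemma~\ref{lem:homogeneous}), one picks $v_2\in K_i\setminus N(v_1)$ and $x_2\in N(v_2)\cap I_i$, then uses $d_G(v_1)>d_G(v_2)$ together with $d_{S_i}(v_1)<d_{S_i}(v_2)$ to locate $x_1\notin V(S_i)$ adjacent to $v_1$ but not $v_2$; the $2$-switch $(x_1v_1,x_2v_2)\to(x_1v_2,x_2v_1)$ then changes the number of $V_i$--$V_{\ell+1-i}$ edges, contradicting Corollary~\ref{lem:no-2-switch}.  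Your proof of~(i) needs an argument of this kind.

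Your treatment of~(ii), by contrast, is correct and close in spirit to the paper's, though organised differently.  The paper handles the case of a missing neighbour in $V_{\ell+1-i}$ by analysing the explicit structure of $S_i$ and producing a $2$-switch after which $S_i(G')$ is no longer one of the admissible types, and then reduces the case of a missing neighbour in $V_i$ to Corollary~\ref{lem:no-2-switch}.  You instead reduce the $V_{\ell+1-i}$ case to a violation of Lemma~\ref{lem:homogeneous} in $G'$ (with the twin subcase handled by passing to a vertex $z$ of a different neighbourhood type, which is available precisely because the $(5^4,2^4)$ graphs have at most two independent-side twins), and reduce the $V_i$ case either to the previous one or to a $2$-switch creating an edge inside $V_{\ell+1-i}(G')$, contradicting~(i) for $G'$.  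Both routes work; yours is slightly more uniform, but it does rely on~(i) holding for every realization, so you must first close the gap above.
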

\begin{proof}
  (i)
  Suppose for contradiction that there is a vertex $v_{1}\in V_{i}\cap I_{i}$.
  By Lemma~\ref{lem:focibly-ds-cycle-scs}(i), $S_{i}$ is either a $P_{4}$ or one of the graphs in Figure~\ref{fig:share-degree-sequence}(a, b).
  We can find a vertex $v_{2}\in K_{i}\setminus N(v_{1})$ and a vertex $x_{2}\in N(v_{2})\cap I_{i}$.  Note that $x_{2}$ is not adjacent to $v_{1}$.
  Since $d_{G}(v_{1})> d_{G}(v_{2})$ while $d_{S_{i}}(v_{1})< d_{S_{i}}(v_{2})$, we can find a vertex $x_{1}$ in $V(G) \setminus V(S_i)$ that is adjacent to $v_{1}$ but not $v_{2}$.
  The applicability of 2-switch $(x_{1} v_{1}, x_{2} v_{2})\rightarrow (x_{1} v_{2}, x_{2} v_{1})$ violates Corollary~\ref{lem:no-2-switch}.

  (ii)
  Let $x_1\in V(G)\setminus V_{i}$ be adjacent to $v_{1}\in V_{\ell + 1 - i}$. Since $V_{\ell + 1 - i}$ is an independent set, it does not contain $x_{1}$.
  Suppose that there exists $v_{2}\in V_{\ell + 1 - i}\setminus N(x_1)$.
  Every vertex $x_{2} \in N(v_{2}) \cap V_{i}$ is adjacent to $v_{1}$. Otherwise, we may conduct the 2-switch $(x_{1} v_{1}, x_{2} v_{2})\rightarrow (x_{1} v_{2}, x_{2} v_{1})$, and denote by $G'$ the resulting graph. It can be seen that $S_{i}(G')$ is neither a $P_{4}$ nor one of the graphs in Figure~\ref{fig:share-degree-sequence}(a, b), contradicting Lemma~\ref{lem:focibly-ds-cycle-scs}(i). Therefore, $S_{i}(G)$ can only be the graph in Figure~\ref{fig:share-degree-sequence}(b). Let $x_{3}$ be a non-neighbor of $v_{1}$ in $V_{i}$ and $v_{3}$ a neighbor of $x_{3}$ in $V_{\ell + 1 - i}$.
  Note that neither $x_{2}v_{3}$ nor $x_{3}v_{1}$ is an edge.  We may either conduct the 2-switch $(x_{1} v_{3}, x_{2} v_{2})\rightarrow (x_{1} v_{2}, x_{2} v_{3})$ or $(x_{1} v_{1}, x_{3} v_{3})\rightarrow (x_{1} v_{3}, x_{3} v_{1})$ to $G$, depending on whether $x_{1}$ is adjacent to $v_{3}$, and denote by $G'$ the resulting graph. The $i$th slice of $G'$ contradicts Lemma~\ref{lem:focibly-ds-cycle-scs}(i).
  
  Suppose that there exists a vertex $v_{2}\in V_{i}\setminus N(x_1)$. The vertex $v_{2}$ is not adjacent to $v_{1}$; otherwise, the applicability of the 2-switch $(x_{1} x_{2}, v_{1} v_{2})\rightarrow (x_{1} v_{2}, x_{2} v_{1})$ where $x_{2}$ is a non-neighbor of $v_{2}$ in $V_{\ell + 1 - i}$ violates Corollary~\ref{lem:no-2-switch}. Let $x_{3}$ be a neighbor of $v_{2}$ in $V_{\ell + 1 - i}$. Note that $v_{1}$ is not adjacent to $x_{3}$. The applicability of the 2-switch $(x_{1} v_{1}, x_{3} v_{2})\rightarrow (x_{1} v_{2}, x_{3} v_{1})$ violates Corollary~\ref{lem:no-2-switch}.
  \end{proof}

We are now ready to prove the main lemma.

\begin{lemma}\label{lem:focibly-ds-G-scs}
  Any realization of a forcibly self-complementary degree sequence is an elementary self-complementary pseudo-split graph.
\end{lemma}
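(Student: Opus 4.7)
The plan is to proceed by induction on $\ell$, the number of distinct degrees of $G$. The base cases $\ell\le 2$ follow immediately from Lemma~\ref{lem:focibly-ds-cycle-scs}: in those cases $G$ is $P_4$, one of the graphs in Figure~\ref{fig:share-degree-sequence}(a, b), $C_5$, or a single vertex, each of which belongs to $\mathcal{E}$. For the inductive step $\ell\ge 3$, the strategy is to peel off the extremal slice $S_1=G[V_1\cup V_\ell]$, apply the induction hypothesis to the smaller graph $G'=G-V(S_1)$, and then observe that reattaching $S_1$ is exactly the operation that installs it as the first building block $K_1\uplus I_1$ of the $\mathcal{E}$-construction.

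The technical crux is to show that every vertex $w\in V(G)\setminus V(S_1)$ is adjacent to every vertex of $V_1$ and nonadjacent to every vertex of $V_\ell$. Lemma~\ref{lem:homogeneous2}(ii) already supplies one direction: if $w$ has a neighbor in $V_\ell$, then $w$ is complete to $V_1\cup V_\ell$. Applying the same lemma to $\overline G$, which also has a forcibly self-complementary degree sequence, yields the dual statement: if $w$ has a non-neighbor in $V_1$, then $w$ is nonadjacent to all of $V_1\cup V_\ell$. Combining these, every outside $w$ falls into one of three exclusive states: (A) adjacent to all of $V_1\cup V_\ell$; (B) adjacent to all of $V_1$ and nonadjacent to all of $V_\ell$; or (C) nonadjacent to all of $V_1\cup V_\ell$. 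It remains to exclude (A) and (C).

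To rule out (C), suppose $w\in V_m$ with $m\notin\{1,\ell\}$ is in state (C). By Lemma~\ref{lem:focibly-ds-cycle-scs}(i), $S_1$ is $P_4$ or one of the graphs in Figure~\ref{fig:share-degree-sequence}(a, b), so it contains an edge $xy$ with $x\in V_1$ and $y\in V_\ell$. Since $d_m>d_\ell$, the bound $|N(w)\setminus N(y)|\ge d_m-d_\ell\ge 1$ furnishes a vertex $z\in N(w)\setminus N(y)$; by state (C) we have $N(w)\cap (V_1\cup V_\ell)=\emptyset$, so $z$ lies outside $V(S_1)$. The 2-switch $(xy,wz)\to(xw,yz)$ is then valid: $xy$ and $wz$ are edges, $xw$ is a non-edge by state (C), and $yz$ is a non-edge by the choice of $z$. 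This removes exactly one edge from $S_1$ while adding none, contradicting Corollary~\ref{lem:no-2-switch}. Because any antimorphism of $G$ sends a vertex in state (A) to one in state (C), state (A) is empty as well, so every outside vertex lies in state (B).

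It remains to run the induction. Set $G'=G-V(S_1)$. Any antimorphism of $G$ swaps $V_1$ with $V_\ell$ and therefore restricts to an antimorphism of $G'$, so $G'$ is self-complementary. To invoke the induction hypothesis we need its degree sequence $\tau'$ to be forcibly self-complementary: given any realization $H$ of $\tau'$, attach a copy of $S_1$ to $H$ by making $V_1$ complete to $V(H)$ and $V_\ell$ nonadjacent to $V(H)$; the resulting graph $H^+$ has degree sequence $\tau$, hence is self-complementary by hypothesis, and any antimorphism of $H^+$ preserves $V_1\cup V_\ell$ and so restricts to an antimorphism of $H$, showing $H$ is self-complementary. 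By induction $G'$ admits an $\mathcal{E}$-decomposition, and placing $S_1$ in front as the first block $K_1\uplus I_1=V_1\uplus V_\ell$ exhibits $G$ as an element of $\mathcal{E}$. The main obstacle is the 2-switch argument excluding state (C): producing the required vertex $z$ there depends essentially on the strict inequality $d_m>d_\ell$.
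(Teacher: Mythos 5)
Your proof is correct, but it is organized quite differently from the paper's. The paper argues globally and non-inductively: having fixed an antimorphism $\sigma$, it shows directly that for any two indices $i>j$ there is no edge between $K_i=V_i$ and $I_j=V_{\ell+1-j}$ (via Lemma~\ref{lem:homogeneous2}(ii), a 2-switch, and Corollary~\ref{lem:no-2-switch}), then uses $\sigma$ to convert this into the required complete/nonadjacent pattern between all pairs of slices, and finally treats the middle slice ($C_5$ or a single vertex) with a separate 2-switch argument. You instead induct on the number $\ell$ of distinct degrees, peel off the extremal slice $S_1$, and prove only the local statement that every outside vertex is complete to $V_1$ and anticomplete to $V_\ell$; two ingredients of yours do not appear in the paper and are worth noting. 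First, you obtain the ``non-neighbor in $V_1$ implies anticomplete'' direction by applying Lemma~\ref{lem:homogeneous2}(ii) to $\overline G$ — this is legitimate because $G$ is self-complementary, so $\overline G$ has the same (forcibly self-complementary) degree sequence, though you should state that justification explicitly. Second, to make the induction run you prove that the reduced sequence $\tau'$ of $G'=G-V(S_1)$ is itself forcibly self-complementary, by attaching $S_1$ (with $V_1$ complete and $V_\ell$ nonadjacent to $V(H)$) to an arbitrary realization $H$ of $\tau'$ and observing that any antimorphism of the result must swap $V_1$ and $V_\ell$, hence restrict to $H$; this transfer argument is a genuinely new and reusable observation. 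What each approach buys: the paper's direct argument is shorter once Lemmas~\ref{lem:focibly-ds-cycle-scs}--\ref{lem:homogeneous2} are in place, while yours avoids the pairwise bookkeeping over all slices and absorbs the middle-slice case into the base case, at the cost of the extra transfer lemma. Your state-(C) exclusion (the 2-switch $(xy,wz)\to(xw,yz)$ decreasing the edge count of $S_1$, valid because $d_m>d_\ell$ supplies $z\in N(w)\setminus N(y)$ outside $V(S_1)$) and the use of an antimorphism to rule out state (A) are both sound.
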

\begin{proof}
  Let $G$ be an arbitrary realization of a forcibly self-complementary degree sequence and $\sigma$ an antimorphism of $G$.
  Lemmas~\ref{lem:focibly-ds-cycle-scs}(i) and~\ref{lem:homogeneous2}(i) imply that $V_{i} = K_{i}$ and $V_{\ell + 1 - i} = I_{i}$ for every $i \in \{1,2,\dots, \lfloor \ell/2 \rfloor\}$.
  Let $i,j$ be two distinct indices in $\{1,2,\dots, \lfloor \ell/2 \rfloor\}$.
  We argue that there cannot be any edge between $K_{i}$ and $I_{j}$ if $i > j$. Suppose for contradiction that there exists $x \in K_{i}$ that is adjacent to $y \in I_{j}$ for some $i > j$. By Lemma~\ref{lem:homogeneous2}(ii), $x$ is adjacent to all the vertices in $S_{j}$. Consequently, $\sigma(x)$ is in $I_{i}$ and has no neighbor in $S_{j}$. Let $v_{1}$ be a vertex in $K_{j}$. Since $v_{1}$ is not adjacent to $\sigma(x)$, it has no neighbor in $I_{i}$ by Lemma~\ref{lem:homogeneous2}(ii). Note that $S_{i}$ is either a $P_{4}$ or one of the graphs in Figure~\ref{fig:share-degree-sequence}(a, b) and so does $S_{j}$. If we focus on the graph induced by $V(S_{i}) \cup V(S_{j})$, we can observe that
  \[
        d_{G[V(S_{i}) \cup V(S_{j})]}(v_1) < d_{G[V(S_{i}) \cup V(S_{j})]}(x).
  \]
  Since $d_{G}(v_1) > d_{G} (x)$, we can find a vertex $x_{1}$ in $V(G) \setminus (V(S_{i}) \cup V(S_{j}))$ that is adjacent to $v_{1}$ but not $x$. Let $v_{2}$ be a neighbor of $x$ in $I_{i}$. Note that $v_{2}$ is not adjacent to $v_{1}$. We can conduct the 2-switch $(x_1v_1,xv_2) \rightarrow (x_1x,v_1v_2)$, violating Corollary~\ref{lem:no-2-switch}. Therefore, $I_{i}$ is nonadjacent to $\bigcup_{p=i+1}^{\lfloor \ell/2 \rfloor} K_{p}$ for all $i = 1, \ldots, \lfloor \ell/2 \rfloor$. Since $\sigma(I_{i}) = K_{i}$ and $\sigma(\bigcup_{p=i+1}^{\lfloor \ell/2 \rfloor} K_{p}) = \bigcup_{p=i+1}^{\lfloor \ell/2 \rfloor} I_{p}$, we can obtain that $K_{i}$ is complete to $\bigcup_{p=i+1}^{\lfloor \ell/2 \rfloor} I_{p}$. Moreover, $K_{i}$ is complete to $\bigcup_{p=i+1}^{\lfloor \ell/2 \rfloor} K_{p}$ by Lemma~\ref{lem:homogeneous2}(ii), and hence $I_{i}$ is nonadjacent to $\bigcup_{p=i+1}^{\lfloor \ell/2 \rfloor} I_{p}$.
  
  We are done if $\ell$ is even. In the rest, we assume that $\ell$ is odd. By Lemma~\ref{lem:focibly-ds-cycle-scs}(ii), the induced subgraph $S_{(\ell + 1)/2}$ is either a $C_{5}$ or contains exactly one vertex. It suffices to show that $V_{(\ell + 1)/2}$ is complete to $K_{i}$ and nonadjacent to $I_{i}$ for every $i \in \{1,2,\dots, \lfloor \ell/2 \rfloor\}$.
  Suppose $\sigma(v) = v$.
  When $V_{(\ell + 1)/2} = \{v\}$, the claim follows from Lemma~\ref{lem:homogeneous2} and that $\sigma(v) = v$ and $\sigma(V_{i}) = V_{\ell+1-i}$.
  Now $|V_{(\ell + 1)/2}| = 5$.
  Suppose for contradiction that there is a pair of adjacent vertices $v_{1} \in V_{(\ell + 1)/2}$ and $x \in I_{i}$.
  Let $v_{2} = \sigma(v_{1})$.
  By Lemmas~\ref{lem:homogeneous2}(ii), $v_{1}$ is adjacent to all the vertices in $S_{i}$.  Accordingly, $v_{2}$ has no neighbor in $S_{i}$.
  Since $S_{(\ell + 1)/2}$ is a $C_{5}$ , we can find $v_{3}\in V_{(\ell + 1)/2}$ that is adjacent to $v_{2}$ but not $v_{1}$.
  We can conduct the 2-switch $(x v_{1}, v_{2} v_{3}) \rightarrow (x v_{2}, v_{1} v_{3})$ and denote by $G'$ as the resulting graph.
  It can be seen that $S_{(\ell + 1)/2}(G')$ is not a $C_{5}$, contradicting Lemma~\ref{lem:focibly-ds-cycle-scs}(ii). 
\end{proof}

Lemmas~\ref{lem:elementary-graph} and~\ref{lem:focibly-ds-G-scs} imply Theorem~\ref{thm:main} and Rao's characterization of forcibly self-complementary degree sequences~\cite{rao1981survey}.

\begin{theorem}[\cite{rao1981survey}]
  \label{thm:forbibly-core-characterization}
  A degree sequence $(d_i^{n_i})_{i=1}^\ell$ is forcibly self-complementary if and only if for all $i =  1, \dots, \lfloor \ell/2 \rfloor$,
  \begin{alignat}{5}
    n_{\ell + 1 -i} &= n_{i} &\in& \{2,4\},
    \\
    d_{\ell + 1 - i} &= n - 1 - d_i &=& \sum_{j=1}^{i} n_j - \frac{1}{2}n_i,
  \end{alignat}
  and $n_{(\ell + 1)/2} \in \{1,5\}$ and $d_{(\ell + 1)/2} = \frac{1}{2} \left(n -1\right)$ when $\ell$ is odd.
\end{theorem}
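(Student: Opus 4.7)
The plan is to combine Lemma~\ref{lem:elementary-graph} with Lemma~\ref{lem:focibly-ds-G-scs}, which together assert that a degree sequence $\tau$ is forcibly self-complementary if and only if $\tau$ is the degree sequence of some graph in $\mathcal{E}$, and then to compute the degree sequence of an arbitrary member of $\mathcal{E}$ explicitly so as to match the closed-form expressions in the theorem.

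Concretely, I would let $G \in \mathcal{E}$ be built from blocks $S_1, \ldots, S_p$ with split partitions $K_i \uplus I_i$ and an extra set $C$ with $|C| \in \{0, 1, 5\}$. Each $S_i$ is either $P_4$ or one of the graphs in Figure~\ref{fig:share-degree-sequence}(a, b); in all three cases $|K_i| = |I_i| \in \{2, 4\}$ and every vertex of $K_i$ has exactly $|K_i|/2$ neighbors in $I_i$. Setting $n_i := |K_i|$ and indexing the blocks so that the $K_i$-vertex degrees decrease in $i$, the construction of $\mathcal{E}$ yields, for $v \in K_i$ and $u \in I_i$,
\[
  d(v) = (n_i - 1) + n_i/2 + \sum_{j \ne i} n_j + \sum_{j > i} n_j + |C|, \qquad d(u) = n_i/2 + \sum_{j < i} n_j.
\]
Using $n = 2\sum_{j=1}^p n_j + |C|$, these simplify to $d(u) = \sum_{j=1}^i n_j - n_i/2$ and $d(v) = n - 1 - d(u)$. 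Hence the distinct degrees pair up as $d_i + d_{\ell+1-i} = n - 1$, with $n_i = n_{\ell+1-i} = |K_i| \in \{2, 4\}$ and $d_{\ell+1-i} = \sum_{j=1}^i n_j - n_i/2$, as required.

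When $\ell$ is odd, the middle slice has $n_{(\ell+1)/2} = |C| \in \{1, 5\}$. A vertex of $C$ has $\sum_j n_j$ neighbors in $\bigcup_i K_i$ and $0$ or $2$ neighbors inside $C$, totalling $(n - 1)/2$ in either case, matching $d_{(\ell+1)/2} = (n-1)/2$. Conversely, given any integer sequence $(d_i^{n_i})_{i=1}^\ell$ satisfying the stated equalities, I can assemble a realizer in $\mathcal{E}$ block by block, choosing $S_i = P_4$ when $n_i = 2$ and $S_i$ the graph in Figure~\ref{fig:share-degree-sequence}(a) when $n_i = 4$, together with $C$ of size $n_{(\ell+1)/2}$ when $\ell$ is odd. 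By the forward computation this graph realizes the given degree sequence, and by Lemma~\ref{lem:elementary-graph} the sequence is therefore forcibly self-complementary. The only real difficulty is bookkeeping: confirming that the indexing $n_i = |K_i|$ stays compatible with the convention $d_1 > d_2 > \cdots > d_\ell$, and treating the middle slice separately when $\ell$ is odd.
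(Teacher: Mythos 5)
Your proposal is correct and follows essentially the same route as the paper: the paper also deduces sufficiency from Lemma~\ref{lem:elementary-graph} by noting that the elementary graph whose block $S_i$ has $2n_i$ vertices realizes the stated sequence, and necessity from Lemma~\ref{lem:focibly-ds-G-scs}. Your explicit computation of $d(v)=n-1-\bigl(\sum_{j\le i}n_j-\tfrac12 n_i\bigr)$ for $v\in K_i$ and $d(u)=\sum_{j\le i}n_j-\tfrac12 n_i$ for $u\in I_i$ simply spells out the bookkeeping the paper leaves implicit.
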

\begin{proof}
  The sufficiency follows from Lemma~\ref{lem:elementary-graph}: note that an elementary self-complementary pseudo-split graph in which $S_{i}$ has $2 n_{i}$ vertices satisfies the conditions.
 The necessity follows from Lemma~\ref{lem:focibly-ds-G-scs}.
\end{proof}

\section{Enumeration}\label{sec:enumeration}

In this section, we consider the enumeration of self-complementary (pseudo-)split graphs.  The following corollary of Propositions~\ref{prop:scs-odd} and~\ref{lem:split-pseudo-split} focuses us on self-complementary split graphs of even orders.
Let $\lambda_n$ and $\lambda'_n$ denote the number of split graphs and pseudo-split graphs, respectively, of order $n$ that are self-complementary.  For convenience, we set $\lambda_{0} = 1$.
\begin{corollary}
  \label{cor:split-pseudo-split}
  For each $k \ge 1$, it holds $\lambda_{4 k + 1} = \lambda_{4 k}.$
  For each $n > 0$,
  \[
    \lambda'_{n} =
    \begin{cases}
      \lambda_{n} & n \equiv 0 \pmod 4,
      \\
      \lambda_{n - 1} + \lambda_{n - 5} & n \equiv 1 \pmod 4. 
    \end{cases}
  \]
\end{corollary}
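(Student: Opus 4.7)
The plan is to establish each identity via an explicit bijection between isomorphism classes, leveraging the structural results of Section~\ref{sec:pre}.

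For $\lambda_{4k+1} = \lambda_{4k}$, I would define $\Phi(G) = G - v$, where $v$ is the canonical degree-$2k$ vertex granted by Proposition~\ref{prop:scs-odd}, and build an inverse $\Psi$ that sends a self-complementary split graph $H$ of order $4k$ to $H$ together with a new vertex $v$ adjacent precisely to the clique $K$ in its (unique, by Lemma~\ref{lem:unique-split-partition}) split partition $K \uplus I$. The graph $\Psi(H)$ is then split, and any antimorphism $\sigma$ of $H$ extends to an antimorphism of $\Psi(H)$ by fixing $v$: since $\sigma$ exchanges $K$ and $I$, the required adjacency flip at $v$ holds. The crucial verification is $\Psi(\Phi(G)) \cong G$, which I would reduce to showing $N_G(v) = K$. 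For this, any split partition of $G$ restricts to a split partition of $G - v$, which by Lemma~\ref{lem:unique-split-partition} is $K \uplus I$; thus the split partition of $G$ is either $(K \cup \{v\}) \uplus I$ (forcing $v$ adjacent to all of $K$) or $K \uplus (I \cup \{v\})$ (forcing $v$ non-adjacent to $I$, so all $2k$ neighbors of $v$ lie in $K$). Either way $N_G(v) = K$, and isomorphisms lift along these canonical pieces to yield the bijection on isomorphism classes.

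For $\lambda'_n$, I would split the count by the size of $C$ in the pseudo-split partition $K \uplus I \uplus C$, where $|C| \in \{0, 5\}$. When $n \equiv 0 \pmod 4$, the case $|C| = 5$ is impossible, since Proposition~\ref{lem:split-pseudo-split} requires $G - C$ to have even order while $n - 5$ is odd; hence every such $G$ is a split graph and $\lambda'_n = \lambda_n$. When $n \equiv 1 \pmod 4$, the $C = \emptyset$ contribution is $\lambda_n = \lambda_{n-1}$ by the first identity. For the $|C| = 5$ contribution I would biject $G$ with the self-complementary split graph $H = G - C$ of order $n - 5$ via Proposition~\ref{lem:split-pseudo-split}; the inverse attaches to $H$ a $C_5$ complete to $K$ and non-adjacent to $I$, and concatenating an antimorphism of $H$ with one of $C_5$ yields an antimorphism of the assembled graph. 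Since a pseudo-split graph with nonempty $C$ has a unique pseudo-split partition, this bijection descends to isomorphism classes, and summing the two contributions gives $\lambda'_n = \lambda_{n-1} + \lambda_{n-5}$.

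The main obstacle is the identification $N_G(v) = K$ in the first identity, as this is what makes the reconstruction $\Psi$ genuinely inverse to $\Phi$; it is secured by combining the uniqueness of the split partition of $G - v$ (Lemma~\ref{lem:unique-split-partition}) with the observation that any split partition of $G$ must extend one of $G - v$. Once that is settled, the remaining verifications --- that the concatenated antimorphisms reverse adjacencies across the interface between $H$ and its appended vertex or $C_5$ --- are routine from the adjacency rules of the construction.
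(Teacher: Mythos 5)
Your proposal is correct and follows essentially the same route as the paper: the first identity via Proposition~\ref{prop:scs-odd} (remove/attach the unique degree-$2k$ vertex, adjacent exactly to the clique of the unique split partition), and the second via Proposition~\ref{lem:split-pseudo-split} (remove/attach the $C_5$ complete to $K$ and nonadjacent to $I$), with the parity of $n-5$ ruling out a nonempty $C$ when $n \equiv 0 \pmod 4$. You merely spell out the bijection details (e.g.\ $N_G(v)=K$ and the extension of antimorphisms) that the paper's two-sentence proof leaves implicit.
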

\begin{proof}
Proposition~\ref{prop:scs-odd} implies that there exists a one-to-one correspondence between self-complementary split graphs with $4k$ vertices and those with $4k+1$ vertices. 
If a self-complementary pseudo-split graph is not a split graph, then it contains a five cycle and the removal of this five cycle from the graph resulting a self-complementary split graph of an even order by Proposition~\ref{lem:split-pseudo-split}. 
\end{proof}

Let $\sigma = \sigma_1\dots\sigma_p$ be an antimorphism of a self-complementary graph of $4k$ vertices.
We find the number of ways in which edges can be introduced so that the result is a self-complementary split graph with $\sigma$ as an antimorphism.  We need to consider adjacencies among vertices in the same cycle and the adjacencies between vertices from different cycles of $\sigma$. For the second part, we further separate into two cases depending on whether the cycles have the same length. We use $G$ to denote a resulting graph and denote by $G_{i}$ the graph induced by the vertices in the $i$th cycle, for $i = 1, \ldots, p$. By Lemma~\ref{lem:unique-split-partition}, $G$ has a unique split partition and we refer to it as $K \uplus I$.

(i) The subgraph $G_i$ is determined if it has been decided whether $v_{i1}$ is to be adjacent or not adjacent to each of the following $|\sigma_i|\over 2$ vertices in $\sigma_i$. Among those $|\sigma_i|\over 2$ vertices, half of them are odd-numbered in $\sigma_i$. Therefore, $v_{i1}$ is either adjacent to all of them or adjacent to none of them by Lemma~\ref{lem:unique-split-partition}. The number of adjacencies to be decided is ${|\sigma_i|\over 4} + 1$.

(ii) The adjacencies between two subgraphs $G_{i}$ and $G_{j}$ of the same order are determined if it has been decided whether $v_{i1}$ is to be adjacent or not adjacent to each of the vertices in $G_{j}$. By Lemma~\ref{lem:unique-split-partition}, the vertex $v_{i1}$ and half of vertices of $G_{j}$ are decided in $K$ or in $I$ after (i). The number of adjacencies to be decided is $|\sigma_j|\over 2$.

(iii) We now consider the adjacencies between two subgraphs $G_{i}$ and $G_{j}$ of different orders.  We use $\mathrm{gcd}(x, y)$ to denote the greatest common factor of two integers $x$ and $y$.  The adjacencies between $G_{i}$ and $G_{j}$ are determined if it has been decided whether $v_{i1}$ is to be adjacent or not adjacent to each of the first $\mathrm{gcd}(|\sigma_i|, |\sigma_j|)$ vertices of $G_{j}$. Among those $\mathrm{gcd}(|\sigma_i|, |\sigma_j|)$ vertices of $G_{j}$, half of them are decided in the same part of $K \uplus I$ as $v_{i1}$ after (i). The number of adjacencies to be decided is $\frac{1}{2} \mathrm{gcd}(|\sigma_i|, |\sigma_j|)$.

By Lemma~\ref{lem:length-of-circular-permutation}, $|\sigma_i| \equiv 0 \pmod 4$ for every $i = 1, \dots, p$.
Let $c$ be the cycle structure of $\sigma$. We use $c_q$ to denote the number of cycles in $c$ with length $4q$ for every $q = 1, 2, \dots, k$. The total number of adjacencies to be determined is
\begin{align*}
  P &= \sum_{q=1}^{k}(c_{q} (q + 1) +  \frac{1}{2} c_{q} (c_{q}-1) \cdot 2q) + \sum_{1 \leq r < s \leq k} c_{r} c_{s} \cdot \frac{1}{2} \mathrm{gcd}(4r, 4s) \\
  &= \sum_{q=1}^{k} (qc_{q}^{2} + c_{q}) + 2 \sum_{1 \leq r < s \leq k} c_{r}c_{s} \mathrm{gcd}(r, s) \, .
\end{align*}
For each adjacency, there are two choices. Therefore, the number of labeled self-complementary split graphs with this $\sigma$ as an antimorphism is $2^{P}$. 

The number of distinct permutations of the cycle structure $c$ consisting of $c_q$ cycles of length $4q$ for every $q = 1, 2, \dots, k$ is
\begin{equation*}
  \frac{(4k)!}{\prod_{q=1}^{k}(4q)^{c_{q}} \cdot c_{q}!} \, ,
\end{equation*}
and it is the number of possible choices for $\sigma$~\cite{clapham-84-enumeration}.
Let $C_{4k}$ be the set that contains all cycle structures $c$ that satisfy $\sum_{q=1}^{k} c_q \cdot 4q = 4k$. Then the number of antimorphisms with all possible labeled self-complementary split graphs with $4k$ vertices corresponding to each is 
\begin{equation} \label{eq:number-of-labelled-scs-1}
  \sum_{c \in C_{4k}} \frac{(4k!)}{\prod_{q=1}^{k}(4q)^{c_{q}} \cdot c_{q}!} 2^{P} \, .
\end{equation}

For a graph $G$ with $4k$ vertices, let $A_G$ be the set of automorphisms of $G$. Then, the number of different labelings of $G$ is $(4k)! / |A_G|$. If $G$ is self-complementary, then the number of antimorphisms of $G$ is equal to the number of automorphisms of $G$. Let $S$ be the set of all non-isomorphic self-complementary split graphs with $4k$ vertices and let $\lambda_{4k} = |S|$. The number of labeled self-complementary split graphs with all possible antimorphisms corresponding to each is equal to
\begin{equation} \label{eq:number-of-labelled-scs-2}
  \sum_{G \in S} |A_{G}| \frac{(4k)!}{|A_{G}|} = \lambda_{4k} \,(4k)!.
\end{equation}

Let Equation~(\ref{eq:number-of-labelled-scs-1}) equals to Equation~(\ref{eq:number-of-labelled-scs-2}) and we solve for $\lambda_{4k}$:
\begin{equation*}
  \lambda_{4k} = \sum_{c \in C_{4k}} \frac{2^{P}}{\prod_{q=1}^{k}(4q)^{c_{q}} \cdot c_{q}!} \, .
\end{equation*}

In Table~\ref{tbl:pseudo-split}, we list the number of self-complementary (pseudo-)split graphs on up to $21$ vertices.

\begin{table}[h!] 
  \caption{The number of self-complementary (pseudo)-split graphs on $n$ vertices.}
  \label{tbl:pseudo-split}
  \begin{center}
    \small
  \begin{tabular}{ l r r r r r r r r r r } 
  \toprule
    $n$ & 4 & 5 & 8 & 9 & 12 & 13 & 16 & 17 & 20 & 21 \\
  \midrule
    split graphs & 1 & 1 & 3 & 3 & 16 & 16 & 218 & 218 & 9608 & 9608\\
    pseudo-split graphs & 1 & 2 & 3 & 4 & 16 & 19 & 218 & 234 & 9608 & 9826 \\
    all & 1 & 2 & 10 & 36 & 720 & 5600 & 703760 & 11220000 & 9168331776 & 293293716992 \\ 
  \bottomrule
  \end{tabular}
\end{center}
\end{table}

\bibliographystyle{plainurl}
\bibliography{reference}

\begin{thebibliography}{10}

\bibitem{ali-08-thesis}
Parvez Ali.
\newblock {\em Study of Chordal graphs}.
\newblock PhD thesis, Aligarh Muslim University, Aligarh, India, 2008.

\bibitem{cao-23-self-complementary-partition}
Yixin Cao, Haowei Chen, and Shenghua Wang.
\newblock On {Trotignon's} conjecture on self-complementary graphs.
\newblock Manuscript, 2023.

\bibitem{clapham-76-potentially-self-complementary-sequences}
C.~R.~J. Clapham.
\newblock Potentially self-complementary degree sequences.
\newblock {\em J. Combinatorial Theory Ser. B}, 20(1):75--79, 1976.
\newblock \href {https://doi.org/10.1016/0095-8956(76)90069-1}
  {\path{doi:10.1016/0095-8956(76)90069-1}}.

\bibitem{clapham-84-enumeration}
C.~R.~J. Clapham.
\newblock An easier enumeration of self-complementary graphs.
\newblock {\em Proc. Edinburgh Math. Soc. (2)}, 27(2):181--183, 1984.
\newblock \href {https://doi.org/10.1017/S0013091500022276}
  {\path{doi:10.1017/S0013091500022276}}.

\bibitem{clapham-76-self-complementary-degree-sequences}
C.~R.~J. Clapham and D.~J. Kleitman.
\newblock The degree sequences of self-complementary graphs.
\newblock {\em J. Combinatorial Theory Ser. B}, 20(1):67--74, 1976.
\newblock \href {https://doi.org/10.1016/0095-8956(76)90068-x}
  {\path{doi:10.1016/0095-8956(76)90068-x}}.

\bibitem{colbourn-78-isomorphism-and-self-complementary}
Marlene~Jones Colbourn and Charles~J. Colbourn.
\newblock Graph isomorphism and self-complementary graphs.
\newblock {\em SIGACT News}, 10(1):25--29, 1978.
\newblock \href {https://doi.org/10.1145/1008605.1008608}
  {\path{doi:10.1145/1008605.1008608}}.

\bibitem{foldes-77-split-graphs}
St\'ephane Foldes and Peter~L. Hammer.
\newblock Split graphs.
\newblock In {\em Proceedings of the {E}ighth {S}outheastern {C}onference on
  {C}ombinatorics, {G}raph {T}heory and {C}omputing}, pages 311--315, 1977.

\bibitem{gibbs-74-self-complementary}
Richard~A. Gibbs.
\newblock Self-complementary graphs.
\newblock {\em J. Combinatorial Theory Ser. B}, 16:106--123, 1974.
\newblock \href {https://doi.org/10.1016/0095-8956(74)90053-7}
  {\path{doi:10.1016/0095-8956(74)90053-7}}.

\bibitem{harary:1960}
Frank Harary.
\newblock Unsolved problems in the enumeration of graphs.
\newblock {\em Magyar Tud. Akad. Mat. Kutat\'{o} Int. K\"{o}zl.}, 5:63--95,
  1960.

\bibitem{lueker-79-interval-isomorphism}
George~S. Lueker and Kellogg~S. Booth.
\newblock A linear time algorithm for deciding interval graph isomorphism.
\newblock {\em Journal of the ACM}, 26(2):183--195, 1979.
\newblock \href {https://doi.org/10.1145/322123.322125}
  {\path{doi:10.1145/322123.322125}}.

\bibitem{maffray-94-pseudo-split}
Fr\'{e}d\'{e}ric Maffray and Myriam Preissmann.
\newblock Linear recognition of pseudo-split graphs.
\newblock {\em Discrete Appl. Math.}, 52(3):307--312, 1994.
\newblock \href {https://doi.org/10.1016/0166-218X(94)00022-0}
  {\path{doi:10.1016/0166-218X(94)00022-0}}.

\bibitem{rao1981survey}
S.B. Rao.
\newblock A survey of the theory of potentially p-graphic and forcibly
  p-graphic degree sequences.
\newblock In {\em Combinatorics and graph theory}, pages 417--440. Springer,
  1981.

\bibitem{read-63-self-complementary-graphs}
R.~C. Read.
\newblock On the number of self-complementary graphs and digraphs.
\newblock {\em J. London Math. Soc.}, 38:99--104, 1963.
\newblock \href {https://doi.org/10.1112/jlms/s1-38.1.99}
  {\path{doi:10.1112/jlms/s1-38.1.99}}.

\bibitem{ringel-63-self-complementary}
Gerhard Ringel.
\newblock Selbstkomplement\"{a}re {G}raphen.
\newblock {\em Arch. Math. (Basel)}, 14:354--358, 1963.
\newblock \href {https://doi.org/10.1007/BF01234967}
  {\path{doi:10.1007/BF01234967}}.

\bibitem{ryser-57-binary-matrices}
Herbert~John Ryser.
\newblock Combinatorial properties of matrices of zeros and ones.
\newblock {\em Canadian Journal of Mathematics}, 9:371--377, 1957.
\newblock \href {https://doi.org/10.4153/CJM-1957-044-3}
  {\path{doi:10.4153/CJM-1957-044-3}}.

\bibitem{sachs-62-self-complementary-graphs}
Horst Sachs.
\newblock \"{U}ber selbstkomplement\"{a}re {G}raphen.
\newblock {\em Publ. Math. Debrecen}, 9:270--288, 1962.
\newblock \href {https://doi.org/10.5486/pmd.1962.9.3-4.11}
  {\path{doi:10.5486/pmd.1962.9.3-4.11}}.

\bibitem{sridharan-98-self-complementary-chordal}
M.~R. Sridharan and K.~Balaji.
\newblock Characterisation of self-complementary chordal graphs.
\newblock {\em Discrete Math.}, 188(1-3):279--283, 1998.
\newblock \href {https://doi.org/10.1016/S0012-365X(98)00025-9}
  {\path{doi:10.1016/S0012-365X(98)00025-9}}.

\bibitem{trotignon-05-self-complementary-graphs}
Nicolas Trotignon.
\newblock On the structure of self-complementary graphs.
\newblock {\em Electronic Notes in Discrete Mathematics}, 22:79--82, 2005.
\newblock \href {https://doi.org/10.1016/j.endm.2005.06.014}
  {\path{doi:10.1016/j.endm.2005.06.014}}.

\bibitem{xu-00-self-complementary-graphs}
Jin Xu and C.~K. Wong.
\newblock Self-complementary graphs and {R}amsey numbers. {I}. {T}he
  decomposition and construction of self-complementary graphs.
\newblock {\em Discrete Math.}, 223(1-3):309--326, 2000.
\newblock \href {https://doi.org/10.1016/S0012-365X(00)00020-0}
  {\path{doi:10.1016/S0012-365X(00)00020-0}}.

\end{thebibliography}
\end{document}